\definecolor{webcolor}{rgb}{0.8,0,0.2}
\definecolor{webbrown}{rgb}{.6,0,0}
\numberwithin{equation}{section}
\renewcommand{\AA}{\mathbb A}
\newcommand{\CC}{\mathbb C}
\newcommand{\NN}{\mathbb N}
\newcommand{\PP}{\mathbb P}
\newcommand{\QQ}{\mathbb Q}
\newcommand{\RR}{\mathbb R}
\newcommand{\ZZ}{\mathbb Z}
\newcommand{\OO}{\mathcal O}
  \newcommand{\calF}{\mathcal F}
\newcommand{\calH}{\mathcal H}
\newcommand{\calB}{\mathcal B}
\newcommand{\calS}{\mathcal S}
\newcommand{\calC}{\mathcal C}
\newcommand{\calX}{\mathcal X}
\newcommand{\scrL}{\mathscr L}
\newcommand{\norm}[1]{ \left|\!\left| #1 \right|\!\right|  }
\def\Spec{\operatorname{Spec}}
\def\Gal{\operatorname{Gal}}
\def\ord{\operatorname{ord}} 
\def \GL {\operatorname{GL}}
\def \SL {\operatorname{SL}}
\def\Aut{\operatorname{Aut}}
\newcommand{\defi}[1]{\textsf{#1}} 
\newcommand\blank[1]{}
\def\bbar#1{\setbox0=\hbox{$#1$}\dimen0=.2\ht0 \kern\dimen0 
\overline{\kern-\dimen0 #1}}
\newcommand{\Kbar}{{\bbar{K}}} 
\newcommand{\Lbar}{\bbar{L}}
\newtheorem{thm}{Theorem}[section]
\newtheorem{lemma}[thm]{Lemma}
\newtheorem{prop}[thm]{Proposition}
\theoremstyle{definition}
\theoremstyle{remark}
\newtheorem{remark}[thm]{Remark}
\newenvironment{romanenum}{\hfill \begin{enumerate} }{\end{enumerate}}
\newenvironment{alphenum}{\hfill \begin{enumerate} }{\end{enumerate}}
\begin{document}

\title{Improved bounds for integral points on modular curves using Runge's method}
\subjclass[2020]{Primary 11G18; Secondary 11F11}

\author{David Zywina}
\address{Department of Mathematics, Cornell University, Ithaca, NY 14853, USA}
\email{zywina@math.cornell.edu}

\begin{abstract}
Consider a modular curve $X_G$ defined over a number field $K$, where $G$ is a subgroup of $\GL_2(\ZZ/N\ZZ)$ with $N>2$.  The curve $X_G$ comes with a morphism $j\colon X_G\to \PP^1_K=\AA^1_K \cup\{\infty\}$ to the $j$-line.   For a finite set of places $S$ of $K$ that satisfies a certain condition, Runge's method shows that there are only finitely many points $P \in X_G(K)$ for which $j(P)$ lies in the ring $\OO_{K,S}$ of $S$-units of $K$.   We prove an explicit version which shows that if $j(P)\in \OO_{K,S}$ for some $P\in X_G(K)$, then the absolute logarithmic height of $j(P)$ is bounded above by $N^{12} \log N$.   Explicits upper bounds have already been obtained by Bilu and Parent though they are not polynomial in $N$.  The modular functions needed to apply Runge's method are constructing using Eisenstein series of weight $1$.
\end{abstract}

\maketitle

\section{Introduction}

Fix an integer $N>2$ and consider a subgroup $G$ of $\GL_2(\ZZ/N\ZZ)$ that contains $-I$.  Associated to the group $G$ is a \defi{modular curve} $X_G$ that is defined over the number field $K_G:=\QQ(\zeta_N)^{\det(G)}$ and is smooth, projective and geometrically irreducible; see \S\ref{S:first modular curve} for details.   In particular, the curve $X_G$ is defined over $\QQ$ when $\det(G)=(\ZZ/N\ZZ)^\times$. The curve $X_G$ comes with a nonconstant morphism \[
j\colon X_G\to \PP^1_{K_G}=\AA^1_{K_G} \cup \{\infty\}
\]
to the $j$-line.  The \defi{cusps} of $X_G$ are the points lying over $\infty$.  We define $Y_G$ to be the open subvariety of $X_G$ that is the complement of its cusps.  

Fix a number field $K\supseteq K_G$.  Let $S$ be a finite set of places of $K$ that contains all the infinite places and let $\OO_{K,S}$ be the ring of $S$-integers in $K$.   Let $\mathfrak{c}_{G,K}$ be the number of orbits of the action of the Galois group $\Gal(\Kbar/K)$ on the set of cusps $X_G(\Kbar)-Y_G(\Kbar)$.   We say that the pair $(K,S)$ satisfies \defi{Runge's condition for $X_G$} if $|S|< \mathfrak{c}_{G,K}$.

Assume that $(K,S)$ satisfies Runge's condition for $X_G$.  \emph{Runge's method} shows that there only finitely many points $P\in Y_G(K)$ with $j(P)\in \OO_{K,S}$.    For background on Runge's method see \cite[Chapter 5]{MR2459823}, \cite[\S9.6.5]{MR2216774} or \cite[\S4]{MR3917917}.   An effective version for modular curves was given by Bilu and Parent \cite{BiluParent} where they showed that for all points $P\in Y_G(K)$ with $j(P)\in \OO_{K,S}$, we have
\begin{align} \label{E:BP}
h(j(P)) \leq 36 |S|^{|S|/2+1}(N^2 |G|/2)^{|S|} \log(2N),
\end{align}
where $h$ denotes the logarithmic absolute height.   

Our main result gives a bound  for $h(j(P))$ which is polynomial in $N$ and independent of $|S|$; the bound in (\ref{E:BP}) is polynomial in $N$ only when we bound $|S|$.    

\begin{thm} \label{T:main}
Fix an integer $N>2$ and a subgroup $G$ of $\GL_2(\ZZ/N\ZZ)$ containing $-I$.  
Let $\mu$ be the degree of the morphism $j\colon X_G\to \PP^1_{K_G}$.  
Assume that $(K,S)$ satisfies Runge's condition for $X_G$, where $K\supseteq K_G$ is a number field and $S$ is a finite set of places of $K$ that contains all the infinite places.    Then for any point $P\in Y_G(K)$ with $j(P)\in \OO_{K,S}$, we have
$h(j(P))\leq  4 (\mu+4)^4 \log N$
and hence
\[
h(j(P))\leq  N^{12} \log N.
\] 
\end{thm}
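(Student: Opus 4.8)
The plan is to follow Runge's method in its classical form, but with a well-chosen collection of modular units on $X_G$ built from weight-$1$ Eisenstein series, and to control the archimedean and non-archimedean sizes of those units explicitly in terms of $N$ and $\mu$. The key point is that Runge's condition $|S| < \mathfrak{c}_{G,K}$ guarantees the existence, for each Galois orbit of cusps, of a modular function that is a unit away from that orbit; by a pigeonhole argument over the (at most $|S|$) places of $S$ and the $\mathfrak{c}_{G,K}$ orbits, a point $P$ with $j(P)\in\OO_{K,S}$ must be $v$-adically close to some single orbit of cusps for every $v$, and the product-formula bookkeeping then turns the local expansions of these units near the cusps into a height bound.

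First I would set up the relevant modular units. Using Eisenstein series of weight $1$ (the $E_1$ with character, or rather the Klein-form / Siegel-unit type products), one constructs for the full modular curve $X(N)$ a supply of functions whose divisors are supported on the cusps, with explicitly bounded coefficients; pushing these down (taking norms over the function-field extension corresponding to $G$) yields, for each Galois orbit $\mathfrak{o}$ of cusps of $X_G$, a function $u_{\mathfrak{o}}\in K(X_G)^\times$ that is regular and nonvanishing on $Y_G$ and whose divisor involves only cusps, with the cusps \emph{not} in $\mathfrak{o}$ appearing with controlled multiplicity. The degree of the pole divisor of each $u_{\mathfrak{o}}$ is $O(\mu \cdot \text{poly}(N))$ — this is where the factor that eventually becomes $(\mu+4)^4$ enters — and the Fourier coefficients of $u_\mathfrak{o}$, as $q$-expansions at the cusps, are bounded by something like $N^{O(1)}$ in absolute value, with bounded denominators.

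Next comes the Runge estimate proper. Fix $P\in Y_G(K)$ with $j(P)\in\OO_{K,S}$. For each place $v$ of $K$, large $|j(P)|_v$ forces $P$ to be $v$-adically near some cusp; since $j(P)\in\OO_{K,S}$ means $|j(P)|_v\le 1$ for $v\notin S$, the point $P$ can only be close to a cusp at the places in $S$. For each $v\in S$ pick an orbit $\mathfrak{o}_v$ that $P$ is \emph{not} close to — possible because $|S|<\mathfrak{c}_{G,K}$ means the orbits $P$ is close to (at most $|S|$ of them, one per place, but we need a uniform choice, so actually we pick one orbit $\mathfrak{o}$ avoided at \emph{all} $v\in S$ simultaneously, again by $|S|<\mathfrak{c}_{G,K}$). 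Then $u_{\mathfrak{o}}(P)$ has small $|\cdot|_v$ for $v\in S$ (it vanishes to positive order along the nearby-cusp directions, which are the only ones contributing) and is bounded for $v\notin S$; summing $\log^+|u_{\mathfrak{o}}(P)|_v$ over all $v$ via the product formula, and using the explicit $q$-expansion bounds to make the "small" and "bounded" quantitative, gives $h(u_{\mathfrak{o}}(P)) = O(\text{poly}(N,\mu)\cdot \log N)$. Finally, relating $h(j(P))$ to $h(u_{\mathfrak{o}}(P))$: since $j$ is a rational function of the modular units of bounded degree and height, $h(j(P))\le C\cdot h(u_{\mathfrak{o}}(P)) + (\text{lower order})$, and optimizing the constants yields $h(j(P))\le 4(\mu+4)^4\log N$; the crude bound $\mu\le [\GL_2(\ZZ/N\ZZ):\{\pm I\}]\cdot(\text{something}) = O(N^3)$, whence $(\mu+4)^4=O(N^{12})$, gives the stated $N^{12}\log N$.

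The main obstacle, I expect, is obtaining the explicit bounds on the $q$-expansion coefficients and the pole orders of the pushed-down Eisenstein units $u_{\mathfrak{o}}$ that are simultaneously (a) polynomial in $N$ and (b) clean enough that the final constant is a small power of $(\mu+4)$ rather than something exponential. Bilu–Parent's bound \eqref{E:BP} is exponential in $|S|$ precisely because their auxiliary functions are not chosen with enough care; the improvement hinges on showing that weight-$1$ Eisenstein series (as opposed to the higher-weight or $\Delta$-based constructions) have small enough coefficients and that taking norms over the extension of degree $\le\mu$ inflates heights only polynomially. Secondarily, one must handle the combinatorics of "which orbit to avoid" cleanly so that a \emph{single} $u_{\mathfrak{o}}$ works at all places of $S$ at once — this is exactly what Runge's condition $|S|<\mathfrak{c}_{G,K}$ is designed to give, but making the local lower bounds for $|u_\mathfrak{o}(P)|_v$ effective near each cusp (uniform width of the $q$-disk, uniform radius of convergence) requires care with the ramification of $j$ at the cusps, again contributing to the $\mu$-dependence.
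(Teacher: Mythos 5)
Your plan follows the Bilu--Parent strategy of constructing \emph{modular units} (functions on $X_G$ with zeros and poles only at cusps, built from Siegel-unit / Klein-form products and pushed down by norms), but this is precisely the approach the paper is replacing, and the obstruction is concrete: a modular unit with a prescribed zero/pole pattern on cusps exists by Manin--Drinfeld, but the total degree of its pole divisor is governed by the torsion order of the cuspidal class group, which is not uniformly bounded by a polynomial in $N$. This is exactly why the bound in \eqref{E:BP} depends exponentially on $|S|$. The paper abandons units entirely: it takes $\varphi=f/\Delta^m$ with $f\in M_{12m,G}$ a modular form of weight $12m$ on $\Gamma(N)$ fixed by $G$. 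Such a $\varphi$ still has poles only at cusps, but now its total pole degree is automatically at most $m\mu$ (one pole of order $\le mw_c$ per cusp), and this is polynomial since $m\le\tfrac1{24}N^3$ and $\mu\le\tfrac12 N^3$. Its existence is a Riemann--Roch statement on the subspace $W_m$ of $f$ with $\nu_c(f)\ge mw_c$ for $c\in\Sigma$, not a Manin--Drinfeld one. The quantitative ingredient missing from your proposal is what makes this effective: the paper first builds a $\QQ$-basis of $M_{12m,G}$ from products of $12m$ weight-$1$ Eisenstein series with integral $q$-expansion coefficients explicitly bounded at \emph{every} cusp (Theorem~\ref{T:small basis}), and then invokes Faltings's version of Siegel's lemma to extract a single $f\in W_m-K_G\Delta^m$ with small integer coordinates in that basis, hence explicitly controlled $q$-expansions (Theorem~\ref{T:modular form of weight k}). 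Without a Siegel's-lemma step, a coefficient bound on your generating modular units does not transfer to a bound on the particular function you evaluate at $P$, and no product-formula bookkeeping can repair that.

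Two smaller structural points. Because $\varphi$ is not a unit, it need not vanish at a nearby cusp $c$, so the product formula is applied not to $\varphi$ itself but to $g_i:=\prod_{c\in\Sigma_i}(\varphi-\varphi(c))$ for each $\Gal_K$-orbit $\Sigma_i$ of the set $\Sigma$ of cusps near $P$, and a separate argument via an auxiliary integer $\xi$ is needed to handle the degenerate case $g_i(P)=0$ (Theorem~\ref{T:nouveau varphi}(\ref{T:nouveau varphi E})). Your pigeonhole picks a single orbit to avoid uniformly; the paper instead partitions $S$ by which orbit contains $c_v$, sums the resulting inequalities over orbits using $\sum_i w_i|\Sigma_i|\le\mu$, and that is how $\mu$ enters the exponent. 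Finally, your framing of weight-$1$ Eisenstein series ``as opposed to $\Delta$-based constructions'' misreads the mechanism: the paper uses both, the Eisenstein products to control coefficients and the division by $\Delta^m$ to control pole degrees.
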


\begin{remark}
\begin{romanenum}
\item
Since $G$ contains $-I$, we have $\mu=[\SL_2(\ZZ/N\ZZ): G\cap \SL_2(\ZZ/N\ZZ)]$.  In particular, we have $\mu \leq |\SL_2(\ZZ/N\ZZ)/\{\pm I\}| \leq N^3/2$.  Using that $N>2$, we find that $4(\mu+4)^4\leq 4(N^3/2+4)^4 \leq N^{12}$.  This explains how the first inequality of Theorem~\ref{T:main} implies the second one.
\item
Since $h$ is the logarithmic absolute height, Theorem~\ref{T:main} implies that there are only finitely many points $P\in Y_G(K)$ with $j(P)\in \OO_{K,S}$ as we vary over all pairs $(K,S)$ satisfying Runge's condition for $X_G$, where $K$ is a subfield of a fixed algebraic closure of $K_G$.  
\item
Consider a subgroup $G$ of $\GL_2(\ZZ/N\ZZ)$ for which $X_G$ has at least $3$ distinct cusps.   Take any number field $K \supseteq K_G$ and any finite set of places $S$ of $K$.  From a classical theorem of Siegel, there are only finitely many points $P\in X_G(K)$ for which $j(P)\in \OO_{K,S}$.  Unfortunately, Siegel's theorem gives no way to bound the height of the $j$-invariants $j(P)$ that arise.   Bilu \cite[\S5]{MR1336325} showed that the heights could actually be effectively bounded by making use of Baker's method.   A quantitative version was given by Sha \cite{MR3250041} and see also \cite{MR4450724}.   We will not state the explicit bound of Sha, but simply remark that it implies that $\log(h(j(P))+1) \leq C_{K,S} N \log N$ holds for all $P\in Y_G(K)$ with $j(P)\in \OO_{K,S}$, where $C_{K,S}$ is a positive constant depending only on the pair $(K,S)$.    Note that when Runge's condition holds, the bounds in Theorem~\ref{T:main} will be stronger and will have no dependency on $(K,S)$.

\item
The condition $N>2$ is used several times during the proof; for such $N$, the classical modular curve $X(N)$ over $\QQ(\zeta_N)$ is a fine moduli space.  For the excluded case $N=2$, one can simply lift $G$ to a subgroup of $\GL_2(\ZZ/4\ZZ)$ to obtain bounds.

\item
Let $g$ be the genus of $X_G$.   We have $\mu < 101(g+1)$, see the comments after \cite[Proposition~2.3]{MR2016709}.   For a pair $(K,S)$ that satisfies Runge's condition for $X_G$, Theorem~\ref{T:main} implies that
\[
h(j(P)) \leq 4 ((101(g+1)+4)^4 \cdot \log N
\]
holds for all $P\in Y_G(K)$ with $j(P)\in \OO_{K,S}$.

\item
The points of our modular curves give important arithmetic information about elliptic curves which we now recall; this will not be used elsewhere.   Fix a number field $K\supseteq K_G$.  Let $E$ be an elliptic curve over $K$ with $j$-invariant $j(E) \notin \{0,1728\}$.   The $N$-torsion subgroup $E[N]$ of $E(\Kbar)$ is a free $\ZZ/N\ZZ$-module of rank $2$.   The natural Galois action on $E[N]$ and a choice of basis gives a Galois representation $\rho_{E,N}\colon \Gal(\Kbar/K) \to \Aut_{\ZZ/N\ZZ}(E[N])\cong \GL_2(\ZZ/N\ZZ)$.    One can show that $\rho_{E,N}(\Gal(\Kbar/K))$ is conjugate in $\GL_2(\ZZ/N\ZZ)$ to a subgroup of $G^t$ if and only if $j(E)=j(P)$ for some $P\in Y_G(K)$.   Here $G^t$ is the group obtained by taking the transpose of the elements of $G$.    As a warning, we note that in the literature, our modular curve $X_G$ is sometimes denoted $X_{G^t}$.
\end{romanenum}
\end{remark}

\subsection{Overview} \label{SS:overview}

Fix an integer $N>2$ and a subgroup $G$ of $\GL_2(\ZZ/N\ZZ)$.  Fix a number field $K\supseteq K_G$  and a finite set $S$ of places of $K$ containing all the infinite places.  The set $\calC_G:=X_G(\Kbar)-Y_G(\Kbar)$ of cusps has a natural action by $\Gal_K:=\Gal(\Kbar/K)$.  Assume that Runge's condition for $X_G$ holds for the pair $(K,S)$, i.e., the number of $\Gal_K$-orbits on $\calC_G$ is strictly greater than $|S|$.

Consider any nonempty and proper subset $\Sigma\subseteq \calC_G$ that is $\Gal_K$-stable.  To apply Runge's method, we need a nonconstant function $\varphi \in K(X_G)$ that satisfies the following properties:
\begin{itemize}
\item The poles of $\varphi$ occur only at cusps of $X_G$.
\item The function $\varphi$ is regular at each cusp in $\Sigma$.   
\item  The function $\varphi$ is a root of a monic polynomial with coefficients in $\ZZ[j]$.
\end{itemize}
The existence of a nonconstant function $\varphi \in K(X_G)$ that satisfies the first two properties is an easy consequence of the Riemann--Roch theorem.   Since such a function $\varphi$ is integral over $\QQ[j]$, we obtain the third property after scaling $\varphi$ by an appropriate positive integer.  However, this existence is not enough for our application since we also need good estimates on the function $\varphi$; especially near the cusps.

The functions $\varphi$ used by Bilu and Parent in \cite{BiluParent} are \emph{modular units}, i.e., their zeros and poles only occur at cusps.  The existence of suitable modular units is not a consequence of the Riemann--Roch theorem but follows from the Manin--Drinfeld theorem.   Modular units can be explicitly constructed by taking products and quotients of Siegel functions; these have many nice properties and have an explicit $q$-expansion at each cusp.   
  One downside to dealing with modular units is that they can have poles of high order and their $q$-expansion can have relatively large coefficients.   For example in the proof of Bilu and Parent, modular units on $X_G$ arise for which the order of the poles might not be uniformly bounded by a polynomial in $N$.\\

Let $\Delta$ be the modular discriminant function; it is a cusp form of weight $12$ for $\SL_2(\ZZ)$ that is everywhere nonzero when viewed as a function of the complex upper half-plane.    For a function $\varphi \in K(X_G)$ whose poles only occur at cusps,  $\varphi\cdot \Delta^m$ will be a modular form of weight $12m$ on $\Gamma(N)$ for all sufficiently large $m$.   

Now fix a positive integer $m$.  Consider the finite dimensional $K_G$-vector space  $M_{12m,G}$ consisting of those modular forms $f$ of weight $12m$ on $\Gamma(N)$ for which $f/\Delta^m$ lies in $K_G(X_G)$.    For each $f\in M_{12m,G}$, the function $f/\Delta^m\in K_G(X_G)$ has no poles away from the cusps.  One advantage of such functions is that they have a uniformly bounded number of poles; in fact, they have fewer than $m N^3/2$ total poles when counted with multiplicity.   

Let $W_m$ be the $K_G$-subspace of $M_{12m,G}$ consisting of modular forms $f$ for which $f/\Delta^m\in K_G(X_G)$ is regular at each cusp $c\in \Sigma$.    For a fixed $f \in W_m$, $\varphi:=f/\Delta^m \in K_G(X_G)$ will have all its poles at cusps and will be regular at all $c\in \Sigma$.   Since we want $\varphi$ to be nonconstant, we will also want to choose $f$ so that  so that it does not lie in the $1$-dimensional subspace $K_G \Delta^m$.  By taking $m$ sufficiently large, we will see from the Riemann--Roch theorem that $\dim_{K_G} W_m \geq 2$ and hence we can find a suitable $f$.   This is the source of our functions $\varphi$ in this paper.

However, to understand the growth of $\varphi$ near the cusps of $X_G$, we need to have suitable explicit bounds on the coefficients of the $q$-expansion of $f$ at each cusp.  As a first step, we will find a basis of the vector space $M_{12m,G}$ for which the $q$-expansions at each cusp have coefficients in $\ZZ[\zeta_N]$ which can be explicitly bounded with respect to any absolute value of $\QQ(\zeta_N)$.   Our basis will be expressed in terms of Eisenstein series of weight $1$ on $\Gamma(N)$.  We then use a version of Siegel's lemma to show the existence of a modular form $f \in W_m- K_G\Delta^m$ with explicit bounds on the coefficients of the $q$-expansion at each cusp.\\

Let us give a brief overview of the sections of the paper.  In \S\ref{S:first modular curve}, we give background on modular curves.  In \S\ref{S:nearby}, we will show how around each cusp of $X_G$ we can express rational functions analytically with respect to different places $v$.  In \S\ref{S:desired varphi}, we state a theorem about the existence of a function $\varphi \in K(X_G)$ with the required properties.   Assuming the existence of such a function $\varphi$, we then prove Theorem~\ref{T:main} in \S\ref{S:main proof}.   The existence of the desired $\varphi$ is proved in \S\ref{S:nouveau proof} after several sections on modular forms.   

In \S\ref{S:modular forms}, we give some background on modular forms.  In particular, we define the vector spaces $M_{k,G}$ of modular forms in \S\ref{SS:MkG} and we give an explicit generating set for $M_{k,G}$ in terms of Eisenstein series in \S\ref{SS:KM}.  Using this generating set, we prove in \S\ref{S:basis} the existence of a basis of $M_{k,G}$ whose $q$-expansions at each cusp have integral coefficients that can be bounded explicitly.  In \S\ref{S:RR}, we define the subspace $W_m$ of $M_{12m,G}$ for each positive integer $m$ and use the Riemann--Roch to show that $\dim_{K_G} W_m \geq 2$ for $m$ large enough.   For $m$ large enough, we prove in \S\ref{S:existence certain} the existence of a modular form $f\in W_m - K_G\Delta^m$ so that its $q$-expansions at the cusps have integral coefficients that can be bounded explicitly.

\subsection{Notation} \label{SS:notation}

Let $\zeta_N$ be the primitive $N$-th root of unity $e^{2\pi i /N}$ in $\CC$.   For each positive integer $N$, we define $q_N:=e^{2\pi i \tau/N}$ which we view as a function in $\tau$ of the complex upper half-plane.  When used with $q$-expansions, we will often view $q_N$ as an indeterminate variable.  For a positive divisor $w$ of $N$, we have $q_N^{N/w}=q_w$.  We set $q:=q_1$.

For a field $K$, we define $\Gal_K:=\Gal(\Kbar/K)$, where $\Kbar$ is a fixed algebraic closure of $K$.  When $K\subseteq \CC$, we will always take $\Kbar$ to be the algebraic closure in $\CC$.

Consider a number field $L$.   We let $M_L$  be the set of places of $L$.  We let $M_{L,\infty}\subseteq M_L$ be the set of infinite places.  Take any place $v$ of $L$.  We let $L_v$ be the completion of $L$ with respect to $v$.   We let $|\cdot|_v$ be the corresponding absolute value on $L_v$ normalized so that $|2|_v=2$ if $v$ is infinite and $|p|_v=p^{-1}$ if $v$ induces the $p$-adic topology on $\QQ$.   For an algebraic closure $\Lbar_v$ of $L_v$, the absolute value $|\cdot|_v$ uniquely extends to it.  Define the integer $d_v=[L_v:\QQ_v]$, where $\QQ_v$ is the completion of $\QQ$ in $L_v$.    For any nonzero $a\in L$, we have the \emph{product formula} $\prod_{v\in M_L} |a|_v^{d_v}=1$.

Let $C$ be a smooth projective and geometrically irreducible curve defined over a field $K$.  For any point $c\in C(K)$, let $\ord_c \colon K(C)\twoheadrightarrow \ZZ \cup \{+\infty\}$ be the discrete valuation whose valuation ring consists of the rational functions that are regular at $c$.

\section{Background: modular curves} \label{S:first modular curve}

Fix a positive integer $N$ and a group $G \subseteq \GL_2(\ZZ/N\ZZ)$.   The goal of this section is to give a quick definition of the modular curve $X_G$.   While we could define $X_G$ as a coarse moduli space, we will instead define it by explicitly giving its function field.

Let $(\ZZ/N\ZZ)^\times \xrightarrow{\sim} \Gal(\QQ(\zeta_N)/\QQ)$, $d\mapsto\sigma_d$ be the group isomorphism for which $\sigma_d(\zeta_N)=\zeta_N^d$.  We define $K_G=\QQ(\zeta_N)^{\det(G)}$ to be the subfield of $\QQ(\zeta_N)$ fixed by $\sigma_d$ for all $d\in \det(G)$.

\subsection{Modular functions} \label{SS:modular functions}

The group $\SL_2(\ZZ)$ acts by linear fractional transformations on the complex upper half-plane $\calH$ and the extended upper half-plane $\calH^*=\calH\cup \QQ \cup \{\infty\}$.   Let $\Gamma$ be a congruence subgroup of $\SL_2(\ZZ)$.  The quotient $\calX_\Gamma:=\Gamma\backslash \calH^*$ is a smooth compact Riemann surface (away from the cusps and elliptic points, use the analytic structure coming from $\calH$ and extend to the full quotient).   Denote the field of meromorphic functions on $\calX_{\Gamma}$ by $\CC(\calX_{\Gamma})$.

Fix a positive integer $N$ and let $\Gamma(N)$ be the congruence subgroup of $\SL_2(\ZZ/N\ZZ)$ consisting of matrices that are congruent to the identity modulo $N$.  Every $f\in \CC(\calX_{\Gamma(N)})$ gives rise to a meromorphic function on $\calH$ that satisfies
\[
f(\tau) = \sum_{n\in \ZZ} a_n(f) q_N^{n}
\]
for all $\tau\in \calH$ with sufficiently large imaginary component, where $q_N:=e^{2\pi i \tau/N}$ and the $a_n(f)$ are unique complex numbers which are nonzero for only finitely many $n <0$.   This Laurent series in $q_N$ is called the \defi{$q$-expansion} of $f$ (at the cusp at infinity) and it determines $f$ uniquely.  Let $\calF_N$ be the subfield of $\CC(\calX_{\Gamma(N)})$ consisting of all meromorphic functions $f$ for which $a_n(f)$ lies in $\QQ(\zeta_N)$ for all $n\in \ZZ$.  For example, $\calF_1=\QQ(j)$, where $j$ is the modular $j$-invariant.  

We now describe a right action $*$ of $\GL_2(\ZZ/N\ZZ)$ on $\calF_N$.  Note that the following two lemmas are both consequences of Theorem~6.6 and Proposition~6.9 of \cite{MR1291394}. 

\begin{lemma} \label{L:Shimura basic}
There is a unique right action $*$ of $\GL_2(\ZZ/N\ZZ)$ on the field $\calF_N$ such that the following hold for all $f\in \calF_N$:
\begin{itemize}
\item
For $A\in \SL_2(\ZZ/N\ZZ)$, we have $(f*A)(\tau) = f(\gamma\tau)$, where $\gamma\in \SL_2(\ZZ)$ is any matrix congruent to $A$ modulo $N$.
\item \label{L:Shimura basics i}
For $A=\left(\begin{smallmatrix}1 & 0 \\0 & d\end{smallmatrix}\right) \in \GL_2(\ZZ/N\ZZ)$, the $q$-expansion of $f*A$ is $\sum_{n\in \ZZ} \sigma_d(a_n(f)) q_N^{n}$. 
\end{itemize}
\end{lemma}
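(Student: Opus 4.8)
The plan is to obtain the lemma by repackaging the structure theory of the modular function field $\calF_N$ developed in \cite{MR1291394}. The inputs I would use are: (a) $\calF_N$ is a finite Galois extension of $\calF_1=\QQ(j)$ whose Galois group is canonically isomorphic to $\GL_2(\ZZ/N\ZZ)/\{\pm I\}$, with the subgroup $\SL_2(\ZZ/N\ZZ)/\{\pm I\}$ acting through the usual action of $\SL_2(\ZZ)$ on $\calH$ by fractional linear transformations (Theorem~6.6 of \cite{MR1291394}); and (b) the automorphism attached to $\left(\begin{smallmatrix}1 & 0 \\ 0 & d\end{smallmatrix}\right)$, for $d\in(\ZZ/N\ZZ)^\times$, acts on the $q$-expansion at the cusp $\infty$ by applying $\sigma_d$ to every Fourier coefficient (Proposition~6.9 of \cite{MR1291394}). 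Note also that $-I$ acts trivially, since $-I$ fixes $\calH$ pointwise under fractional linear transformations; hence the action is genuinely one of the whole group $\GL_2(\ZZ/N\ZZ)$, not merely of a quotient, which is what the statement asserts.

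For the existence of $*$, I would convert the (left) Galois action of $\GL_2(\ZZ/N\ZZ)$ on $\calF_N$ given by (a) into a right action by precomposing with the appropriate anti-automorphism of $\GL_2(\ZZ/N\ZZ)$ (inversion $A\mapsto A^{-1}$, or a transpose, depending on the conventions of the source), and then verify directly that with this normalization one has $(f*A)(\tau)=f(\gamma\tau)$ for $A\in\SL_2(\ZZ/N\ZZ)$ and any $\gamma\in\SL_2(\ZZ)$ with $\gamma\equiv A\pmod N$, and that the $q$-expansion of $f*\left(\begin{smallmatrix}1 & 0 \\ 0 & d\end{smallmatrix}\right)$ equals $\sum_{n\in\ZZ}\sigma_d(a_n(f))q_N^n$. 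The first identity is immediate from the description in (a), and the second is exactly (b).

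For uniqueness, I would use the fact recorded in the excerpt that an element of $\calF_N$ is determined by its $q$-expansion at $\infty$. Suppose $*$ is any right action of $\GL_2(\ZZ/N\ZZ)$ on $\calF_N$ satisfying the two bullets. Given $A\in\GL_2(\ZZ/N\ZZ)$, set $d:=\det A$ and write $A=\left(\begin{smallmatrix}1 & 0 \\ 0 & d\end{smallmatrix}\right)B$ with $B:=\left(\begin{smallmatrix}1 & 0 \\ 0 & d^{-1}\end{smallmatrix}\right)A\in\SL_2(\ZZ/N\ZZ)$. Then $f*A=\bigl(f*\left(\begin{smallmatrix}1 & 0 \\ 0 & d\end{smallmatrix}\right)\bigr)*B$ by the right-action property; the inner function is forced by the second bullet, since it is the unique element of $\calF_N$ with $q$-expansion $\sum_{n\in\ZZ}\sigma_d(a_n(f))q_N^n$, and then applying $*B$ with $B\in\SL_2(\ZZ/N\ZZ)$ is forced by the first bullet. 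Hence any two right actions obeying the two bullets coincide.

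The only genuinely delicate point is the bookkeeping in the existence step: Shimura's isomorphism in (a) is most naturally written as a left action (via $f\mapsto f\circ\gamma^{-1}$, or with a transpose, depending on the source), so some care with inverses and transposes is needed in order to land on precisely the formula $(f*A)(\tau)=f(\gamma\tau)$ as stated. Once (a) and (b) are granted, everything else is formal.
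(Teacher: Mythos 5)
Your proposal takes essentially the same route as the paper: the paper does not give a standalone proof of this lemma but simply cites Theorem~6.6 and Proposition~6.9 of Shimura \cite{MR1291394}, which are exactly the two inputs you identify. Your write-up fills in the details behind that citation correctly: the uniqueness argument via the factorization $A=\left(\begin{smallmatrix}1 & 0 \\ 0 & d\end{smallmatrix}\right)B$ with $B\in\SL_2(\ZZ/N\ZZ)$ is sound, and you are right to flag that the passage from Shimura's (left) Galois action to the stated right action $*$ requires care with inverses/transposes, though once the conventions are fixed it is purely formal.
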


For each subgroup $G$ of $\GL_2(\ZZ/N\ZZ)$, let $\calF_N^G$ be the subfield of $\calF_N$ fixed by $G$ under the action $*$ of Lemma~\ref{L:Shimura basic}.  

\begin{lemma} \label{L:Shimura basics 2}
\begin{romanenum}
\item \label{L:Shimura basics 2 i}
The matrix $-I$ acts trivially on $\calF_N$ and the right action of $\GL_2(\ZZ/N\ZZ)/\{\pm I\}$ on $\calF_N$ is faithful.
\item \label{L:Shimura basics 2 ii}
We have $\calF_N^{\GL_2(\ZZ/N\ZZ)}=\calF_1=\QQ(j)$ and $\calF_N^{\SL_2(\ZZ/N\ZZ)}=\QQ(\zeta_N)(j)$.
\item \label{L:Shimura basics 2 iii}
The field $\QQ(\zeta_N)$ is algebraically closed in $\calF_N$.
\end{romanenum}
\end{lemma}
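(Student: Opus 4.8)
The plan is to derive all three parts from the basic structural fact---which is the content of Theorem~6.6 and Proposition~6.9 of \cite{MR1291394}---that $\calF_N$ is a Galois extension of $\calF_1=\QQ(j)$ and that the action $*$ of Lemma~\ref{L:Shimura basic} induces an isomorphism $\GL_2(\ZZ/N\ZZ)/\{\pm I\}\xrightarrow{\sim}\Gal(\calF_N/\QQ(j))$. Granting this, each assertion reduces to elementary bookkeeping with subgroups, degrees, and $q$-expansions. The one genuinely external input is precisely the identification of that Galois group, and in particular the faithfulness of the action; a self-contained argument for faithfulness would run through the fact that $\Gamma(N)$ acts freely on the non-elliptic points of $\calH$ (which is where the hypothesis $N>2$ would enter), but I would instead cite Shimura for it and spend the writeup on the reductions below.

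For part~(\ref{L:Shimura basics 2 i}) I would first check by hand that $-I$ acts trivially, since this is what lets the $\GL_2(\ZZ/N\ZZ)$-action descend to the quotient in the first place. If $\gamma\in\SL_2(\ZZ)$ is congruent to $-I$ modulo $N$, then $-\gamma\in\Gamma(N)$; since $-\gamma$ and $\gamma$ induce the same linear fractional transformation on $\calH$ and every $f\in\calF_N$ is $\Gamma(N)$-invariant, the first bullet of Lemma~\ref{L:Shimura basic} gives $(f*(-I))(\tau)=f(\gamma\tau)=f((-\gamma)\tau)=f(\tau)$. Faithfulness of the induced action of $\GL_2(\ZZ/N\ZZ)/\{\pm I\}$ is then immediate, since this action is exactly the Galois action on $\calF_N/\QQ(j)$.

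For part~(\ref{L:Shimura basics 2 ii}), the inclusion $\QQ(j)\subseteq\calF_N^{\GL_2(\ZZ/N\ZZ)}$ holds because $j\in\calF_1$ is fixed by $*$ (by the first bullet since $j$ is $\SL_2(\ZZ)$-invariant, and by the second since $j$ has rational $q$-expansion), and equality follows from the Galois correspondence applied to the full group. For the $\SL_2$-statement, I note that $\SL_2(\ZZ/N\ZZ)/\{\pm I\}$ is the kernel of the determinant map to $(\ZZ/N\ZZ)^\times$, hence normal, so $\calF_N^{\SL_2(\ZZ/N\ZZ)}$ is Galois over $\QQ(j)$ of degree $|(\ZZ/N\ZZ)^\times|$; on the other hand $\zeta_N\in\calF_N$ as a constant function is fixed by $\SL_2(\ZZ/N\ZZ)$ (constant $q$-expansions are unchanged by the first bullet), and $[\QQ(\zeta_N)(j):\QQ(j)]=[\QQ(\zeta_N):\QQ]=|(\ZZ/N\ZZ)^\times|$ since $j$ is transcendental over $\QQ(\zeta_N)$. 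Comparing degrees forces $\calF_N^{\SL_2(\ZZ/N\ZZ)}=\QQ(\zeta_N)(j)$.

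For part~(\ref{L:Shimura basics 2 iii}), which needs no Galois input, suppose $f\in\calF_N$ is algebraic over $\QQ(\zeta_N)$, hence over $\CC$. Since $\CC$ is algebraically closed in the field $\CC(\calX_{\Gamma(N)})$ of meromorphic functions on the compact Riemann surface $\calX_{\Gamma(N)}$, this forces $f\in\CC$ to be a constant function. Its $q$-expansion is then the constant series whose only nonzero coefficient is $a_0(f)=f$, so the defining condition of $\calF_N$ gives $f=a_0(f)\in\QQ(\zeta_N)$. Thus $\QQ(\zeta_N)$ is algebraically closed in $\calF_N$. The substantive content is therefore isolated entirely in the cited Galois description of $\calF_N/\QQ(j)$, and everything else above is routine.
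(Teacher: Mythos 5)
Your proposal is correct and follows the same route as the paper, which simply notes (immediately before Lemma~\ref{L:Shimura basic}) that both Lemmas~\ref{L:Shimura basic} and \ref{L:Shimura basics 2} are consequences of Theorem~6.6 and Proposition~6.9 of Shimura and gives no further proof. You have supplied the same citation and then spelled out the elementary Galois-theoretic, degree-counting, and constant-field reductions that the paper leaves implicit; these reductions are all sound.
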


\subsection{Modular curves} 
\label{SS:modular curve definition}

Take any subgroup $G$ of $\GL_2(\ZZ/N\ZZ)$.  From Lemma~\ref{L:Shimura basics 2}, we find that the field $\calF_N^G$ has transcendence degree $1$ and that $K_G$ is the algebraic closure of $\QQ$ in $\calF_N^G$.  We define the \defi{modular curve} $X_G$ to be the smooth, projective and geometrically irreducible curve over $K_G$ that has function field $\calF_N^G$.   We have $j\in \calF_N^G=K_G(X_G)$ which gives a nonconstant morphism 
\[
j\colon X_G \to \PP^1_{K_G}
\]
whose degree we denote by $\mu$.   

Let $\bbar{G}$ be the subgroup of $\GL_2(\ZZ/N\ZZ)$ generated by $G$ and $-I$.  Observe that $\calF_N^G=\calF_N^{\bbar{G}}$ and hence $X_G=X_{\bbar{G}}$.  We have $\mu=[\SL_2(\ZZ/N\ZZ): \bbar{G} \cap \SL_2(\ZZ/N\ZZ)]$.  

Let $\Gamma_G$ be the congruence subgroup of $\SL_2(\ZZ)$ consisting of those matrices whose image modulo $N$ lies in $\bbar{G}\cap \SL_2(\ZZ/N\ZZ)$; it contains $-I$.  We have an inclusion $\CC \cdot K_G(X_G) \subseteq \CC(\calX_{\Gamma_G})$ of fields that both have degree $\mu= [\SL_2(\ZZ): \Gamma_G]$ over $\CC(j)$.  Therefore, $\CC(\calX_{\Gamma_G})=\CC(X_G)$.   Using this equality of function fields, we shall identify $X_G(\CC)$ with the Riemann surface $\calX_{\Gamma_G}$.  

\subsection{Cusps}

Fix notation as in \S\ref{SS:modular curve definition}.  Let $\calC_G$ be the set of cusps of $\calX_{\Gamma_G}=X_G(\CC)$. i.e., the set of orbits of $\Gamma_G$ on $\QQ\cup\{\infty\}$ or equivalently the points above $\infty$ under the morphism $j$.   We have $\calC_G \subseteq X_G(\QQ(\zeta_N))$, see Lemma~\ref{L:cusps of X(N)}(\ref{L:cusps of X(N) ii}).

Let $c_\infty$ be the cusp at infinity, i.e., the orbit containing $\infty$.  We have a bijection 
 \[
 \Gamma_G\backslash \SL_2(\ZZ)/U \xrightarrow{\sim} \calC_G, \quad \Gamma_G A U \mapsto A\cdot c_\infty,
 \]   
 where $U$ is the group of upper triangular matrices in $\SL_2(\ZZ)$ generated by $-I$ and $\left(\begin{smallmatrix}1 & 1 \\0 & 1\end{smallmatrix}\right)$.   Since the level of $\Gamma_G$ divides $N$, we find that the cusp $A\cdot c_\infty$ of $X_G$ depends only on $A$ modulo $N$.   In particular, it makes sense to talk about the cusp $A\cdot c_\infty$ for any $A\in \SL_2(\ZZ/N\ZZ)$.

Now take any cusp $c\in \calC_G$.  We define $w_c$ to be the ramification index of $c$ over $\infty$ with respect to the morphism $j\colon X_G\to \PP^1_{K_G}$.  Equivalently, $w_c$ is the \defi{width} of the cusp $c$ for the congruence subgroup $\Gamma_G$.  The integer $w_c$ divides $N$.  Since $\mu$ is the degree of $j\colon X_G\to \PP^1_{K_G}$, we have
\begin{align} \label{E:sum of widths}
\sum_{c \in \calC_G} w_c = \mu.
\end{align}

Take any $A\in \SL_2(\ZZ/N\ZZ)$ for which $A\cdot c_\infty =c$.   Then for any $f\in \CC(X_G)$, the $q$-expansion of $f*A \in \calF_N$ is a Laurent series in the variable $q_{w_c}$.   

\subsection{The modular curve $X(N)$} \label{SS:XN}

Let $X(N)$ be the modular curve corresponding to the subgroup of $\GL_2(\ZZ/N\ZZ)$ consisting of matrices of the form $\left(\begin{smallmatrix}1 & 0 \\0 & *\end{smallmatrix}\right)$; it is a curve defined over $\QQ$.   The group $\SL_2(\ZZ/N\ZZ)$ has a left action on the curve $X(N)_{\QQ(\zeta_N)}$ corresponding to the right action on the function field $\QQ(\zeta_N)(X(N))=\calF_N$ given in Lemma~\ref{L:Shimura basic}.    

\begin{lemma} \label{L:cusps of X(N)}
\begin{romanenum}
\item \label{L:cusps of X(N) i}
The cusp $c_\infty$ of $X(N)$ is defined over $\QQ$.   
\item \label{L:cusps of X(N) ii}
For any subgroup $G$ of $\GL_2(\ZZ/N\ZZ)$, we have $\calC_G \subseteq X_G(\QQ(\zeta_N))$.  The set $\calC_G$ is stable under the action of $\Gal(\QQ(\zeta_N)/K_G)$.  
\end{romanenum}
\end{lemma}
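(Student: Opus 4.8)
The plan is to reduce everything to an explicit description of the cusps of $X(N)$ and how $\SL_2(\ZZ/N\ZZ)$, together with the arithmetic Galois action, moves them around. For part~(\ref{L:cusps of X(N) i}), I would argue as follows. The curve $X(N)$ is defined over $\QQ$ and the cusp $c_\infty$ is the point lying over $j=\infty$ that is ``visible'' in the $q$-expansion at infinity: concretely, $\calF_N = \QQ(\zeta_N)(X(N))$ and $c_\infty$ is the place of this function field for which every $f\in\calF_N$ has $\ord_{c_\infty}(f)$ equal to the order of vanishing of its $q_N$-expansion. To see $c_\infty$ is $\QQ$-rational, it suffices to exhibit a uniformizer at $c_\infty$ with $q$-expansion having coefficients in $\QQ$, or more cleanly, to observe that the local ring at $c_\infty$ is $\Gal(\QQ(\zeta_N)/\QQ)$-stable: by Lemma~\ref{L:Shimura basic}(second bullet), the action of $\sigma_d\in\Gal(\QQ(\zeta_N)/\QQ)$ on $\calF_N$ coincides with the action of $\left(\begin{smallmatrix}1&0\\0&d\end{smallmatrix}\right)\in\GL_2(\ZZ/N\ZZ)$, and this matrix fixes the cusp $c_\infty$ (it is upper triangular, hence lies in a group fixing the orbit of $\infty$); consequently it sends the valuation $\ord_{c_\infty}$ to itself. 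Since $X(N)$ is a curve over $\QQ$ whose base change to $\QQ(\zeta_N)$ has function field $\calF_N$, a $\QQ(\zeta_N)$-point fixed by every $\sigma_d$ descends to a $\QQ$-point; thus $c_\infty\in X(N)(\QQ)$.

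For part~(\ref{L:cusps of X(N) ii}), I would first handle $X(N)$ itself and then push forward along the natural map $X(N)_{\QQ(\zeta_N)}\to (X_G)_{\QQ(\zeta_N)}$. Every cusp of $X(N)$ is of the form $A\cdot c_\infty$ for some $A\in\SL_2(\ZZ/N\ZZ)$, using the bijection $\Gamma(N)\backslash\SL_2(\ZZ)/U\xrightarrow{\sim}\calC_{X(N)}$ recalled in the text (with $\Gamma_{X(N)}=\Gamma(N)$). Since $\SL_2(\ZZ/N\ZZ)$ acts on $X(N)_{\QQ(\zeta_N)}$ by $\QQ(\zeta_N)$-automorphisms and $c_\infty\in X(N)(\QQ)\subseteq X(N)(\QQ(\zeta_N))$ by part~(i), we get $A\cdot c_\infty\in X(N)(\QQ(\zeta_N))$ for every $A$; hence all cusps of $X(N)$ are $\QQ(\zeta_N)$-rational. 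Now for a general subgroup $G$, the curve $X_G$ is the quotient of $X(N)$ (over $\QQ(\zeta_N)$, after extending scalars) corresponding to the inclusion $\calF_N^{\bar G}\subseteq\calF_N$; the cusps of $X_G$ are precisely the images of the cusps of $X(N)$ under the finite $\QQ(\zeta_N)$-morphism $\pi\colon X(N)_{\QQ(\zeta_N)}\to (X_G)_{\QQ(\zeta_N)}$. A $\QQ(\zeta_N)$-point maps to a $\QQ(\zeta_N)$-point, so $\calC_G\subseteq X_G(\QQ(\zeta_N))$.

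It remains to show $\calC_G$ is stable under $\Gal(\QQ(\zeta_N)/K_G)$. Here I would use the description of the Galois action on cusps in terms of the diamond operators. The Galois group $\Gal(\QQ(\zeta_N)/\QQ)$ acts on $X(N)(\QQ(\zeta_N))$, and its action on the cusps is computed by Lemma~\ref{L:Shimura basic}(second bullet): $\sigma_d$ acts on $\calC_{X(N)}$ the same way as $\left(\begin{smallmatrix}1&0\\0&d\end{smallmatrix}\right)\in\SL_2(\ZZ/N\ZZ)$ — wait, this matrix need not lie in $\SL_2$, so more precisely $\sigma_d$ permutes the set of cusps of $X(N)$ (it is a Galois automorphism of the $\QQ$-variety $X(N)$, so it maps cusps to cusps), and this permutation is compatible with $\pi$. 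Descending along $\pi$, we get that $\Gal(\QQ(\zeta_N)/\QQ)$ permutes $\calC_G$. Finally, for $\sigma_d$ with $d\in\det(G)$ — equivalently $\sigma_d\in\Gal(\QQ(\zeta_N)/K_G)$ — I claim $\sigma_d$ maps each cusp of $X_G$ into $\calC_G$: indeed $\sigma_d$ already preserves the finite set of all cusps, and $\Gal(\QQ(\zeta_N)/K_G)$ fixes $K_G$ pointwise and hence acts on the $K_G$-variety $X_G$, so it sends $X_G(\QQ(\zeta_N))$-points, and in particular cusps, to cusps. The main obstacle I anticipate is bookkeeping the precise compatibility between the three group actions in play — the geometric $\SL_2(\ZZ/N\ZZ)$-action, the Galois action, and the quotient map $\pi$ — and being careful that the ``diamond'' matrix $\left(\begin{smallmatrix}1&0\\0&d\end{smallmatrix}\right)$ is not in $\SL_2$, so that the Galois action on cusps is not literally given by an $\SL_2$-translation but rather by the full $\GL_2$-action on $\calF_N$ restricted to cusps; invoking Lemma~\ref{L:Shimura basic} cleanly is what makes this work.
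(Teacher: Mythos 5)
Your part (i) takes a genuinely different route from the paper: you argue directly that $c_\infty$ is $\QQ$-rational by matching the Galois action of $\sigma_d$ on $\calF_N$ with the $*$-action of $\left(\begin{smallmatrix}1&0\\0&d\end{smallmatrix}\right)$ from Lemma~\ref{L:Shimura basic}, which merely applies $\sigma_d$ to the $q$-expansion coefficients and hence fixes the valuation at $c_\infty$; the paper instead cites an external result, \cite[Lemma~5.2]{OpenImage}, for both the $\QQ$-rationality of $c_\infty$ and the $\QQ(\zeta_N)$-rationality of all cusps of $X(N)$. Your direct argument is self-contained and works. Part (ii)'s rationality claim, pushing $\calC$ forward along $\pi$, matches the paper.

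The Galois-stability step, however, has a gap. You claim that the permutation of cusps of $X(N)$ by $\Gal(\QQ(\zeta_N)/\QQ)$ is ``compatible with $\pi$'' and descend. But $\pi\colon X(N)_{\QQ(\zeta_N)}\to (X_G)_{\QQ(\zeta_N)}$ is only a $\QQ(\zeta_N)$-morphism and is in general \emph{not} defined over $K_G$: that would require $\calF_N^{\bbar{G}}\subseteq K_G\cdot\calF_N^{H}$ with $H=\left\{\left(\begin{smallmatrix}1&0\\0&*\end{smallmatrix}\right)\right\}$, i.e., that $\bbar{G}$ contains every $\left(\begin{smallmatrix}1&0\\0&d\end{smallmatrix}\right)$ with $d\in\det(G)$, which fails for most $G$. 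So the Galois action on $(X_G)_{\QQ(\zeta_N)}$ does not intertwine with $\pi$ and the descent is unjustified. Your closing remark that $\Gal(\QQ(\zeta_N)/K_G)$ acts on the $K_G$-variety $X_G$ and ``sends cusps to cusps'' is the right conclusion, but ``cusps to cusps'' is precisely what needs a proof. The paper's fix avoids $\pi$ entirely: $\calC_G=j^{-1}(\infty)$ where $j\colon X_G\to\PP^1_{K_G}$ is defined over $K_G$, so the cusp locus is automatically stable under $\Gal(\QQ(\zeta_N)/K_G)$. Substituting that one-line observation for your $\pi$-compatibility step repairs the argument.
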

\begin{proof}
Let $\calC$ be the set of cusps of in $X(N)(\CC)$.  From \cite[Lemma 5.2]{OpenImage}, we find that $\calC\subseteq X(N)(\QQ(\zeta_N))$ and that $c_\infty$ is defined over $\QQ$.   Take any subgroup $G\subseteq \GL_2(\ZZ/N\ZZ)$ and let $\pi\colon X(N)_{\QQ(\zeta_N)} \to (X_G)_{\QQ(\zeta_N)}$ be the morphism corresponding to the inclusion $\QQ(\zeta_N)(X_G) \subseteq \calF_N = \QQ(\zeta_N)(X(N))$ of function fields.   We have $\pi(\calC)=\calC_G$.   Therefore, $\calC_G \subseteq \pi(X(N)(\QQ(\zeta_N))\subseteq X_G(\QQ(\zeta_N))$, where the last equality uses that $\pi$ is defined over $\QQ(\zeta_N)$. Since $j\colon X_G \to \PP^1_{K_G}$ is defined over $K_G$, the set $j^{-1}(\infty)=\calC_G\subseteq X_G(\QQ(\zeta_N))$ is stable under the action of $\Gal(\QQ(\zeta_N)/K_G)$. 
\end{proof}

\begin{remark} \label{SS:X(N) moduli}
Suppose that $N>2$.  The modular curve $X(N)_{\QQ(\zeta_N)}$ has an alternate description as a fine moduli space.  We refer to Deligne and Rapoport \cite{MR337993} where this theory is fully developed. They define a smooth projective curve $M_N$ over $\QQ(\zeta_N)$ that is the moduli space for generalized elliptic curves with a  level $N$ structure (we use $N>2$ to ensure the moduli problem is represented by a scheme and we have also base extended to $\QQ(\zeta_N)$ from the ring $\ZZ[1/N,\zeta_N]$).   The curve $M_N$ has $\phi(N)$ irreducible components.  Our curve $X(N)_{\QQ(\zeta_N)}$ can be identified with an irreducible $C$ component of $M_N$ (in particular, the irreducible component consisting of generalized elliptic curves with a level $N$ structure so that the Weil pairing of the basis is $\zeta_N$).  To prove this identification one need only identify the function field of $C$ with $\calF_N = \QQ(\zeta_N)(X(N))$; thus can be done using the material in \cite[Chapter VII, \S4]{MR337993} which compares modular forms with the classical theory.
\end{remark}

For every $f\in \QQ(X(N))$, its $q$-expansion (at $c_\infty$) lies in $\QQ(\!(q_N)\!)$ and it lies in $\QQ[\![q_N]\!]$ when $f$ is regular at $c_\infty$.   So with $\OO$ the local ring of rational functions on $X(N)$ that are regular at $c_\infty$, $q$-expansions induces an isomorphism between the completion of $\OO$ with $\QQ[\![q_N]\!]$. Equivalently, $q$-expansions induces an isomorphism between the formal completion of the curve $X(N)$ along $c_\infty$ with the formal spectrum of $\QQ[\![q_N]\!]$.

Now consider \emph{any} field $F\supseteq \QQ$.  If $\OO$ is the local ring of rational functions on $X(N)_F$ that are regular at $c_\infty$, then we obtain an isomorphism between the completion of $\OO$ with $F[\![q_N]\!]$.     So for any $f \in F(X(N)_F)=F(X(N))$, we obtain a $q$-expansion in $F(\!(q_N)\!)$ that lies in $F[\![q_N]\!]$ when $f$ is regular at $c_\infty$.

Consider any field $F\supseteq \QQ(\zeta_N)$.  The group $\SL_2(\ZZ/N\ZZ)$ acts on $X(N)_F$ and hence acts on $F(X(N))$.  The field $F(X(N))$ is the compositum of $F$ and $\calF_N$.  The group $\SL_2(\ZZ/N\ZZ)$ acts on the field $F(X(N))$ as the identity on $F$ and as our action $*$ on $\calF_N$; we also denote this action by $*$.

\section{Analytic expansions at the cusps} \label{S:nearby}

Fix an integer $N>2$.   Fix a number field $L\supseteq \QQ(\zeta_N)$ and a place $v$ of $L$.  

Take any cusp $c$ of $X(N)$ and choose an $A\in \SL_2(\ZZ/N\ZZ)$ for which $A\cdot c_\infty=c$.  Consider a rational function $f $ in $\Lbar_v(X(N))$.  As noted in \S\ref{SS:XN}, $f*A$ has a $q$-expansion in $\Lbar_v(\!(q_N)\!)$ that we denote by $\sum_{n\in \ZZ} a_n(f*A) q_N^n$.   We will show that for all points in $X(N)(\Lbar_v)$ near $c$, but maybe not equal to $c$, the function $f$ can be expressed analytically in terms of the Laurent series of the the $q$-expansion of $f*A$.   In order to make this precise, we will define a subset $\Omega_{c,v} \subseteq X(N)(\Lbar_v)$ that only contains the one cusp $c$ and whose interior is an open neighborhood of $c$.
 
For a subgroup $G$ of $\GL_2(\ZZ/N\ZZ)$, we will also define similar subsets $\Omega_{c,v}$ of $X_G(\Lbar_v)$ in \S\ref{SS:nearby 3}.   

\subsection{The modular curve $X(N)$} \label{SS:XN specializations}

As noted in \S\ref{SS:XN}, the group $\SL_2(\ZZ/N\ZZ)$ acts on $X(N)_{F}$ and  $F(X(N))$ for any field $F\supseteq \QQ(\zeta_N)$.   For each $A\in \SL_2(\ZZ/N\ZZ)$, let $\iota_A$ be the corresponding automorphism of $X(N)_{\QQ(\zeta_N)}$.    For any place $v$ of $L$, $\iota_A$ gives a homeomorphism $X(N)(\Lbar_v)\xrightarrow{\sim} X(N)(\Lbar_v)$.   Define the open ball $B_v:=\{a\in \Lbar_v: |a|_v<1\}$ of $\Lbar_v$.    

\begin{prop} \label{P:specializations}
Fix a place $v$ of $L$.  Take a matrix $A\in \SL_2(\ZZ/N\ZZ)$ and define the cusp $c:=A\cdot c_\infty$ of $X(N)$.   Then there is a unique continuous map 
\[
\psi_{A,v}\colon B_v \to X(N)(\Lbar_v)
\]
such that the following hold:
\begin{alphenum}
\item \label{P:specializations a}
Take any rational function $f\in \Lbar_v(X(N))$ and let $r$ be the radius of convergence of the power series $\sum_{n=0}^\infty a_n(f*A) x^n$ in $\Lbar_v[\![x]\!]$.   Then for all nonzero $t\in B_v$ with $|t|_v<r$, we have
\[
f(\psi_{A,v}(t))=\sum_{n\in \ZZ} a_n(f*A) t^n
\]
in $\Lbar_v$.
\item \label{P:specializations b}
We have $\psi_{A,v}(0)=c$.  For any nonzero $t\in B_v$, $\psi_{A,v}(t)$ is not a cusp and 
\[
j(\psi_{A,v}(t))= J(t^N)
\]
in $\Lbar_v$, where $J(q)=q^{-1}+744+196884q+21493760q^2 + \cdots \in \ZZ(\!(q)\!)$ is the $q$-expansion of $j$.  When $v$ is finite, we have $|j(\psi_{A,v}(t))|_v = |t|_v^{-N}>1$ for all nonzero $t\in B_v$.
\end{alphenum}
We have $\psi_{A,v}=\iota_A \circ \psi_{I,v}$.  
\end{prop}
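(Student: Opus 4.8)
The plan is to reduce to the case $A=I$ and then construct $\psi_{I,v}$ explicitly from the moduli description of $X(N)$, deducing property~(a) from the identification (recorded in \S\ref{SS:XN}) of the formal completion of $X(N)$ along $c_\infty$ with the formal spectrum of $\Lbar_v[\![q_N]\!]$. First I would handle the reduction. By \S\ref{SS:XN} the geometric action of $\SL_2(\ZZ/N\ZZ)$ on $X(N)_{\Lbar_v}$ realizes the action $*$ on $\Lbar_v(X(N))$, so $f\circ\iota_A=f*A$ for every $f$; moreover $j*A=j$ for all $A\in\SL_2(\ZZ/N\ZZ)$ by Lemma~\ref{L:Shimura basic}, since $j$ is $\SL_2(\ZZ)$-invariant. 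Given $\psi_{I,v}$ with the stated properties, put $\psi_{A,v}:=\iota_A\circ\psi_{I,v}$. Then $\iota_A$ is a homeomorphism of $X(N)(\Lbar_v)$ that permutes the cusps and sends $c_\infty$ to $A\cdot c_\infty=c$, so $\psi_{A,v}$ is continuous, sends $0$ to $c$, and avoids cusps off $0$; and for $f\in\Lbar_v(X(N))$ we get $f(\psi_{A,v}(t))=(f*A)(\psi_{I,v}(t))=\sum_n a_n(f*A)\,t^n$ on the appropriate disk, the case $f=j$ (where $j*A=j$) giving the statements of~(b) about $j$. For uniqueness it is enough to treat $A=I$: if $\psi'$ satisfies (a) and (b) for $A$, then $\iota_{A^{-1}}\circ\psi'$ satisfies them for $I$ — for (a) one uses that $*$ is a right action, so $(g*A^{-1})*A=g$ — whence $\iota_{A^{-1}}\circ\psi'=\psi_{I,v}$ and $\psi'=\psi_{A,v}$; this also yields the final identity of the proposition.

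Next I would construct $\psi_{I,v}$. Since $N>2$, Remark~\ref{SS:X(N) moduli} identifies $X(N)_{\Lbar_v}$ with the fine moduli space of generalized elliptic curves with a level-$N$ structure whose Weil pairing is $\zeta_N$ (we use $\zeta_N\in L\subseteq\Lbar_v$). For $t\in B_v\setminus\{0\}$ we have $|t^N|_v<1$, so the Tate curve $E_{t^N}$ over $\Lbar_v$ (with $E_{t^N}(\CC)=\CC^{\times}/t^{N\ZZ}$ when $v$ is archimedean) is a genuine elliptic curve, $(\zeta_N,t)$ is a $\ZZ/N\ZZ$-basis of $E_{t^N}[N]$, and its Weil pairing is $\zeta_N$ after fixing the standard sign conventions; define $\psi_{I,v}(t)$ to be the corresponding point of $X(N)(\Lbar_v)$, and set $\psi_{I,v}(0):=c_\infty$, which corresponds to the standard $N$-gon with its level structure. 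For archimedean $v$ this is the classical uniformization: $E_{t^N}(\CC)\cong\CC/(\ZZ+\ZZ\tau)$ with $\tau=\tfrac{N}{2\pi i}\log t$, and $\psi_{I,v}(t)$ is the class of $\tau$ in $\Gamma(N)\backslash\calH=X(N)(\CC)$. The point is that the Tate-curve construction is valid over the \emph{whole} punctured disk, not merely formally near $0$; combined with the fact that it realizes the universal object over the formal completion of $X(N)$ at $c_\infty$ — which by \S\ref{SS:XN} is the formal spectrum of $\Lbar_v[\![q_N]\!]$, with $q_N$ exactly the coordinate $t$ — this shows $\psi_{I,v}$ is a well-defined continuous (indeed rigid-analytic, resp. holomorphic) map on all of $B_v$.

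For (b): the Tate curve satisfies $j(E_q)=J(q)$ identically, so $j(\psi_{I,v}(t))=J(t^N)$ for $t\ne 0$; as $J(q)=q^{-1}(1+744q+\cdots)$ has integer coefficients, for finite $v$ the series converges on $B_v$ and $|J(t^N)|_v=|t|_v^{-N}>1$. For (a): the $q$-expansion of $f$ (that is, of $f*I$) is, by the identification above, just the image $\sum_n a_n(f)\,q_N^n$ of $f$ in the Laurent series ring obtained from the completed local ring at $c_\infty$. Since $\psi_{I,v}$ sends $t$ to the point with parameter $q_N=t$, we get $f(\psi_{I,v}(t))=\sum_n a_n(f)\,t^n$ for $t$ near $0$; both sides are analytic in $t$ on the disk $|t|_v<r$, where $r$ is the radius of convergence of the nonnegative part of this Laurent series (equivalently the distance from $c_\infty$ to the nearest pole of $f$ other than $c_\infty$), so the identity principle extends the equality to that whole disk.

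Finally, uniqueness of $\psi_{I,v}$, and the main obstacle. Any $\psi'$ satisfying (a) and (b) agrees with $\psi_{I,v}$ on a small disk about $0$, by applying (a) to the coordinate functions of an affine chart near $c_\infty$. By (b), $j\circ\psi'=J(t^N)=j\circ\psi_{I,v}$ on all of $B_v$, so $\psi'(t)$ and $\psi_{I,v}(t)$ always lie in the same fibre of $j$; away from the preimage of the branch locus $\{0,1728,\infty\}$ of $j\colon X(N)\to\PP^1_{K_G}$, the morphism $j$ is finite étale and $\psi',\psi_{I,v}$ are continuous sections, so the set on which they agree is open and closed in the complement of the (discrete) set $F$ of $t$ with $J(t^N)\in\{0,1728,\infty\}$. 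Because $B_v\setminus F$ is connected as a rigid-analytic space (resp. as a complex manifold) — even though $B_v$ is totally disconnected in the $v$-adic topology — this agreement set is all of $B_v\setminus F$, and since it is closed and $B_v\setminus F$ is dense in $B_v$, it is all of $B_v$. The delicate point is exactly this global control on $B_v$: at places $v\mid N$ the curve $X(N)$ has bad reduction, so there is no naive "residue disk" at $c_\infty$, and it is the Tate-curve/Deligne–Rapoport description — rather than any integral smooth model — that makes $\psi_{I,v}$ visibly defined and analytic on the whole of $B_v$ and makes the connectedness input in the uniqueness argument legitimate.
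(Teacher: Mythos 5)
Your reduction to $A = I$ and your construction of $\psi_{I,v}$ via the Tate curve and the Deligne--Rapoport moduli interpretation are in the same spirit as the paper's proof (the paper actually splits into cases: for archimedean $v$ it uses the classical quotient map $\calH\cup\{\infty\}\to\calX_{\Gamma(N)}$ transported along an embedding $L\hookrightarrow\CC$, and only uses Deligne--Rapoport for finite $v$; your uniform Tate-curve formulation is fine). However, your uniqueness argument has a genuine gap.

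You argue: by (a), any competitor $\psi'$ agrees with $\psi_{I,v}$ on a small disk around $0$; by (b), $j\circ\psi' = j\circ\psi_{I,v}$ everywhere; away from the $j$-ramification locus the agreement set is open and closed, and then you invoke connectedness of $B_v\setminus F$ ``as a rigid-analytic space'' to conclude. This last step is where the argument breaks down for finite $v$. The competitor $\psi'$ is only assumed \emph{continuous} in the $v$-adic topology, not rigid-analytic, so there is no reason for its agreement set with $\psi_{I,v}$ to be an admissible open in the rigid $G$-topology; being open and closed in the totally disconnected metric topology says essentially nothing. Rigid-analytic connectedness is a statement about the $G$-topology and only applies to rigid-analytic objects; you cannot apply it to a naively continuous map. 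The paper avoids this entirely by a more elementary and global argument: it exhibits explicit functions $j$ and $x_{(r,s)}$ (built from $\wp$) which generate $\Lbar_v(X(N))$ and whose $q$-expansions have radius of convergence $\geq 1$ in $\Lbar_v$; then (a) alone pins down the values of all these generators at $\psi_{I,v}(t)$ for \emph{every} $t\in B_v$ simultaneously, so $\psi_{I,v}(t)$ is determined with no local-to-global propagation needed.

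A secondary issue: in your argument for (a), the assertion that ``both sides are analytic in $t$ on the disk $|t|_v < r$'' in the non-archimedean case is not self-evident and requires an argument. Formal completion only produces a power-series identity; to turn it into an actual pointwise equality on a disk, the paper carefully constructs, for each $t$, a morphism $\Spec R_t\to X(N)_{\Lbar_v}$ (where $R_t$ is the ring of power series converging past radius $|t|_v$) factoring through the Deligne--Rapoport morphism, and then evaluates. Your parenthetical claim that $r$ equals the distance to the nearest pole is also unnecessary and, in the non-archimedean setting where $X(N)$ may not have good reduction at $v$, not obviously true.
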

\begin{proof}

We first assume that there is a unique continuous map $\psi_{I,v}\colon B_v\to X(N)(\Lbar_v)$ satisfying (\ref{P:specializations a}) and (\ref{P:specializations b}).  Define the continuous map $\psi_{A,v} := \iota_A \circ \psi_{I,v}\colon B_v \to  X(N)(\Lbar_v)$.   We have $\psi_{A,v}(0)=\iota_A(\psi_{I,v}(0))=\iota_A(c_\infty)=A\cdot c_\infty=c$.   Take any $f\in \Lbar_v(X(N))$.   We have $f*A \in \Lbar_v(X(N))$.  Let $r$ be the radius of convergence of $\sum_{n=0}^\infty a_n(f*A) x^n \in \Lbar_v[\![x]\!]$.  For any nonzero $t\in B_v$ with $|t|_v<r$, we have 
\[
f(\psi_{A,v}(t))=f(\iota_A(\psi_{I,v}(t))=(f*A)(\psi_{I,v}(t))=\sum_{n\in \ZZ} a_n(f*A) t^n \in \Lbar_v,
\] 
where the last equality uses our assumption that $\psi_{I,v}$ exists with the expected properties.   For any nonzero $t\in B_v$, we also have $j(\psi_{A,v}(t))=j(\iota_A(\psi_{I,v}(t)))=(j*A)(\psi_{I,v}(t))=j(\psi_{I,v}(t))$.   So (\ref{P:specializations b}) holds for $\psi_{A,v}$ by the corresponding property that we have assumed holds for $\psi_{I,v}$.   Therefore, $\psi_{A,v}$ satisfies (\ref{P:specializations a}) and (\ref{P:specializations b}).

We now show that $\psi_{A,v}$ is unique.  A similar argument as above shows that for any continuous $\psi_{A,v}$ satisfying (\ref{P:specializations a}) and (\ref{P:specializations b}), the map $\iota_A^{-1} \circ \psi_{A,v} \colon B_v \to X(N)(\Lbar_v)$ is continuous and satisfies the same properties (\ref{P:specializations a}) and (\ref{P:specializations b}) as $\psi_{I,v}$.  By our assumption that $\psi_{I,v}$ is unique, we deduce that  $\psi_{I,v}=\iota_A^{-1} \circ \psi_{A,v}$.  This proves the uniqueness of $\psi_{A,v}$ and that $\psi_{A,v}=\iota_A \circ \psi_{I,v}$.  

So without loss of generality, we may assume that $A=I$ and hence $c=c_\infty$.\\

Before starting the case where $v$ is infinite, let us recall the classic situation over $\CC$. Define $B:=\{t \in \CC: |t|<1\}$.   Let 
\[
\pi\colon \calH \cup\{\infty\} \to \Gamma(N)\backslash\calH^*=\calX_{\Gamma(N)} = X(N)(\CC)
\]
be the natural quotient map with the last equality being the identification from \S\ref{SS:modular curve definition}.   The map $\pi$ can be expressed as the composition of $\calH\cup \{\infty\} \twoheadrightarrow B$, $\tau\mapsto e^{2\pi i\tau/N}$ (where $\infty\mapsto 0$) with a unique function $\psi\colon B \to X(N)(\CC)$.    The map $\psi$ is continuous and its image is all of $X(N)(\CC)$ except for those cusps that are not $c_\infty$.   We have $\psi(0)=c_\infty$ and $\psi(t)$ is not a cusp for all nonzero $t\in B$.  Take any $f\in \CC(X(N))=\CC(\calX_{\Gamma(N)})$ that is regular at $c_\infty$.   Let $\sum_{n\in \ZZ} a_n(f) q_N^n \in \CC(\!(q_N)\!)$ be the $q$-expansion of $f$ and let $r$ be its radius of convergence of $\sum_{n=0}^\infty a_n(f) x^n \in \CC[\![x]\!]$.   Take any nonzero $t\in B$ with $|t|<r$ and choose a $\tau \in \calH$ for which $t=e^{2\pi i \tau/N}$.    We have \[
f(\psi(t))=(f\circ \pi)(\tau) = \sum_{n\in \ZZ} a_n(f) e^{2\pi i \tau n/N} = \sum_{n\in \ZZ} a_n(f) t^n.
\]   
In the special case $f=j$, we have a $q$-expansion $J(q) =q^{-1}+744+ \cdots \in \ZZ[\![q]\!]$ and hence $j(\psi(t)) = J(t^N)$ for all nonzero $t\in B_v$.

We now consider the case where $v$ is infinite.  Fix an embedding $\sigma\colon L \hookrightarrow \CC$ that satisfies $|a|_v=|\sigma(a)|$ for all $a\in L$.  By continuity, $\sigma$ extends uniquely to an isomorphism $\bbar\sigma\colon \Lbar_v=L_v \xrightarrow{\sim} \CC$ of fields that respects absolute values (the field $L_v$ is not real since it contains $\zeta_N$ with $N>2$).  Since $X(N)$ and the cusp $c_\infty$ are defined over $\QQ$, $\bbar\sigma$ induces a homeomorphism $\bbar\sigma_*\colon X(N)(\Lbar_v) \xrightarrow{\sim} X(N)(\CC)$ that maps $c_\infty$ to itself.    Define the continuous function
\[
\psi_{I,v}\colon B_v\xrightarrow{\sim} B \xrightarrow{\psi} X(N)(\CC) \xrightarrow{\sim} X(N)(\Lbar_v),
\]
where the first map is given by $\bbar\sigma$ and the third map is the inverse of $\bbar\sigma_*$.  The image of $\psi_{I,v}$ is equal to $X(N)(\Lbar_v)$ with all the cusps except $c_\infty$ removed since $\psi$ has this property.   The desired properties for $\psi_{I,v}$ are now immediate consequences of the analogous properties of $\psi$.  \\

  The process of $q$-expansions induces an isomorphism between the formal completion of the curve $X(N)_{\QQ(\zeta_N)}$ along $c_\infty$ with the formal spectrum of $\QQ(\zeta_N)[\![q_N]\!]$.   We will require a stronger version of Deligne and Rapoport that we now recall.   Define the ring $R:=(\ZZ[\zeta_N][\![ q_N ]\!])\otimes_{\ZZ[\zeta_N]} \QQ(\zeta_N)$; we can view it as a subring of $\QQ(\zeta_N)[\![q_N]\!]$.   From Deligne and Rapoport \cite[Chapter VII, Corollary 2.4]{MR337993} with Remark~\ref{SS:X(N) moduli}, the Tate curve produces a morphism
\begin{align}\label{E:DR}
\Spec R \to X(N)_{\QQ(\zeta_N)}
\end{align}
that induces an isomorphism between the formal completion of $X(N)_{\QQ(\zeta_N)}$ along $c_\infty$ and the formal spectrum of $\QQ(\zeta_N)[\![q_N]\!]$.  

We now consider a finite place $v$ of $L$.  Define the $\Lbar_v$-algebra $R':=R \otimes_{\QQ(\zeta_N)} \Lbar_v$.  Take any $t\in B_v$.   Evaluating the power series in $R' \subseteq \Lbar_v[\![q_n]\!]$ at $t$ gives a  homomorphism $\phi_t\colon R'\to \Lbar_v$ of $\Lbar_v$-algebras.  Composing $\phi_t^*\colon \Spec \Lbar_v \to \Spec R'$ with the morphism $\Spec R' \to X(N)_{\Lbar_v}$ obtained from base changing (\ref{E:DR}) produces a point $\psi_{I,v}(t)$ in $
X(N)(\Lbar_v)$.    We thus have defined a map 
\[
\psi_{I,v} \colon B_v \to X(N)(\Lbar_v)
\]
and it is continuous.  Note that $\psi_{I,v}(0)=c$.

Take any $t\in B_v$ and let $R_t$ be the subring of $\Lbar_v[\![q_N]\!]$ consisting of those series whose radius of convergence is strictly greater than $|t|_v$.  By base changing (\ref{E:DR}) by $\Lbar_v$ and using the inclusion $R' \subseteq R_t$, we obtain a morphism $\Spec R_t \to X(N)_{\Lbar_v}$ that induces an isomorphism between the formal completion of $X(N)_{\Lbar_v}$ at $c_\infty$ and the formal spectrum of the power series ring $\Lbar_v[\![q_N]\!]$.  Evaluating power series in $R_t$ at $t$ induces a morphism $\Spec \Lbar_v \to \Spec R_t$ which after composing with $\Spec R_t \to X(N)_{\Lbar_v}$ gives the $\Lbar_v$-point $\psi_{I,v}(t)$. So for any rational function $f\in \Lbar_v(X(N))$ regular at $c_\infty$ for which $\sum_{n=0}^\infty a_n(f) q_N^n \in \Lbar_v[\![x]\!]$ lies in $R_t$, we have $f(\psi_{I,v}(t))=  \sum_{n=0}^\infty a_n(f) t^n$.

The function $j^{-1}$ is regular at $c_\infty$ and its $q$-expansion is $h(q):=J(q)^{-1}=q - 744q^2 +\cdots \in \ZZ[\![q]\!]\subseteq R$.   Take any nonzero $t\in B_v$.  We have $j^{-1}(\psi_{I,v}(t))=t^{N} - 744t^{2N} +\cdots=h(t^N)$ in $\Lbar_v$.   Since $|t|_v<1$, this series implies that $|j^{-1}(\psi_{I,v}(t))|_v=|t|_v^{N}$.   In particular, $j^{-1}(\psi_{I,v}(t))$ is nonzero since $t$ is nonzero.   Therefore, $\psi_{I,v}(t)$ is not a cusp and $j(\psi_{I,v}(t))=h(t^N)^{-1}=J(t^N)$.   We have also shown that $| j(\psi_{I,v}(t)) |_v = |t|_v^{-N}>1$.

We have now shown that $\psi_{I,v}\colon B_v \to X(N)(\Lbar_v)$ satisfies (\ref{P:specializations b}).  It also satisfies (\ref{P:specializations a}) since we have shown that (\ref{P:specializations a}) holds for $f \in \Lbar_v(x(N))$ regular at $c_\infty$ and for $j$ (for any $f\in \Lbar_v(X(N))$, there is an integer $m$ for which $f \cdot j^m$ is regular at $c$).

Finally, it remains to prove the uniqueness of $\psi_{I,v}$; we have already proved the existence.   For $\tau\in \calH$, let $\wp(z;\tau)$ be the Weierstrass elliptic function for the lattice $\ZZ \tau + \ZZ \subseteq \CC$.   For integers $0\leq r , s < N$ that are not both zero, define the function $x_{(r,s)}(\tau):=36 E_4(\tau) E_6(\tau) \Delta(\tau)^{-1} \cdot (2\pi i)^{-2} \wp(\tfrac{r}{N}\cdot \tau + \tfrac{s}{N}; \tau)$, where $E_4$ and $E_6$ are the usual Eisenstein series of weight $4$ and $6$, respectively, on $\SL_2(\ZZ)$.   The function $x_{(r,s)}$ lies in $\calF_N$ with cusps only at the poles, see~\cite[Lemma~6.1]{OpenImage}.  The $q$-expansion of $12\cdot (2\pi i)^{-2} \wp(\tfrac{r}{N}\cdot \tau + \tfrac{s}{N}; \tau)$ is of the form $\sum_{n=0}^\infty a_n q_N^n$ with $a_n\in \ZZ[\zeta_N]$ and $|a_n|_v \ll n$ for $n\geq 1$, see the proof of \cite[Lemma~6.1]{OpenImage} for the explicit $q$-expansion.   From this, we deduce that the $q$-expansion of each $x_{(r,s)}$ will have coefficients in $\QQ(\zeta_N)\subseteq L$ and will have radius of convergence at least $1$ when viewed as having coefficients in $\Lbar_v$.  In particular, the value $x_{(r,s)}(\psi_{I,v}(t))$ depends only on $t$ by (\ref{P:specializations a}) (and not the choice of $\psi_{I,v}$).   By (\ref{P:specializations b}), $j(\psi_{I,v}(t))$ also depends only on $t$.  However, the functions $x_{(r,s)}$ along with $j$ generate the field $\calF_N$, cf.~\cite[Lemma~6.1]{OpenImage}, and hence also generate $\Lbar_v(X(N))$ over $\Lbar_v$.   This implies that $\psi_{I,v}(t)$ does not depend on the choice of $\psi_{I,v}$.
\end{proof}

Let $\calC$ be the set of cusps of $X(N)$; it is a subset of $X(N)(L)$ by Lemma~\ref{L:cusps of X(N)}(\ref{L:cusps of X(N) ii}).
Take any cusp $c\in \calC$ and any place $v$ of $L$.  Choose an $A\in \SL_2(\ZZ/N\ZZ)$ for which $A\cdot c_\infty =c$.  With $\psi_{A,v}$ as in Proposition~\ref{P:specializations}, define the following subset of $X(N)(\Lbar_v)$:
\[
\Omega_{c,v} := \begin{cases}  \psi_{A,v}(B_v) & \text{if $v$ is finite,}\\
\psi_{A,v}(\{t\in \Lbar_v: |t|_v\leq e^{-\pi\sqrt{3}/N}\}) & \text{if $v$ is infinite.} 
\end{cases}
\]
The only cusp in the set $\Omega_{c,v}$ is $c$.

\begin{lemma}  \label{L:Omega first}
Take any place $v$ of $L$.
\begin{romanenum}
\item \label{L:Omega first i}
For each cusp $c\in \calC$, $\Omega_{c,v}$ does not depend on the choice of $A\in \SL_2(\ZZ/N\ZZ)$ for which $A\cdot c_\infty=c$.
\item \label{L:Omega first ii}
If $v$ is finite, then $\bigcup_{c\in \calC} \Omega_{c,v} = \{P \in X(N)(\Lbar_v): P \text{ a cusp or } |j(P)|_v>1\}$.
\item \label{L:Omega first iii}
If $v$ is infinite, then $\bigcup_{c\in \calC} \Omega_{c,v} = X(N)(\Lbar_v)$.
\end{romanenum}
\end{lemma}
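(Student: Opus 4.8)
The plan is to handle parts~(i), (ii), (iii) in turn, leaning on Proposition~\ref{P:specializations} throughout and reducing~(iii) to the classical fact that the standard fundamental domain of $\SL_2(\ZZ)$ lies in $\{\tau\in\calH:\operatorname{Im}\tau\ge\sqrt3/2\}$. \emph{Part (i).} Using the bijection $\Gamma_G\backslash\SL_2(\ZZ)/U\xrightarrow{\sim}\calC_G$ of \S\ref{SS:modular curve definition} with $\Gamma_G=\Gamma(N)$, together with the normality of $\Gamma(N)$ in $\SL_2(\ZZ)$, the stabilizer of $c_\infty$ in $\SL_2(\ZZ/N\ZZ)$ is the subgroup $\overline{U}$ generated by $-I$ and $T:=\left(\begin{smallmatrix}1&1\\0&1\end{smallmatrix}\right)$; hence $A\cdot c_\infty=A'\cdot c_\infty$ forces $A'=Au$ for some $u\in\overline{U}$. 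Since $\iota_{-I}=\operatorname{id}$ by Lemma~\ref{L:Shimura basics 2}(\ref{L:Shimura basics 2 i}) and $A\mapsto\iota_A$ is a group homomorphism, it suffices to prove $\psi_{T^k,v}(t)=\psi_{I,v}(\zeta_N^k t)$ for every $k\in\ZZ$ and $t\in B_v$; granting this, $\psi_{Au,v}=\iota_A\circ\iota_u\circ\psi_{I,v}$ and $\psi_{A,v}=\iota_A\circ\psi_{I,v}$ have the same image on $B_v$ and on $\{|t|_v\le e^{-\pi\sqrt3/N}\}$, because multiplication by $\zeta_N^k$ preserves those balls ($|\zeta_N|_v=1$). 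To obtain the identity I would invoke the uniqueness assertion of Proposition~\ref{P:specializations}: the continuous map $t\mapsto\iota_{T^{-k}}(\psi_{I,v}(\zeta_N^k t))$ sends $0$ to $\iota_{T^{-k}}(c_\infty)=c_\infty$, and it satisfies property~(\ref{P:specializations a}) because $f*T^{-k}$ has $q$-expansion $\sum_n\zeta_N^{-kn}a_n(f)\,q_N^n$ (apply $\tau\mapsto\tau-k$), so that $f\bigl(\iota_{T^{-k}}(\psi_{I,v}(\zeta_N^k t))\bigr)=(f*T^{-k})(\psi_{I,v}(\zeta_N^k t))=\sum_n\zeta_N^{-kn}a_n(f)(\zeta_N^k t)^n=\sum_n a_n(f)t^n$ on the (common) disk of convergence of $\sum_n a_n(f)x^n$ and $\sum_n a_n(f*T^{-k})x^n$; property~(\ref{P:specializations b}) follows likewise from $j*T^{-k}=j$ and $\zeta_N^{kN}=1$. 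By uniqueness this map is $\psi_{I,v}$, which rearranges to the claim.

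\emph{Part (ii).} The inclusion $\subseteq$ is immediate from Proposition~\ref{P:specializations}(\ref{P:specializations b}): a point of $\Omega_{c,v}=\psi_{A,v}(B_v)$ is either the cusp $c=\psi_{A,v}(0)$ or, for nonzero $t\in B_v$, a non-cusp with $|j(\psi_{A,v}(t))|_v=|t|_v^{-N}>1$. For $\supseteq$: a cusp $P$ equals $\iota_A(c_\infty)=\psi_{A,v}(0)$ for suitable $A$, since $\SL_2(\ZZ/N\ZZ)$ acts transitively on $\calC=\SL_2(\ZZ/N\ZZ)/\overline{U}$. So let $P$ be a non-cusp with $|j(P)|_v>1$ and set $u:=j(P)^{-1}$, so $0<|u|_v<1$. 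The $q$-expansion of $j^{-1}$ is $h(q)=J(q)^{-1}=q-744q^2+\cdots\in q+q^2\ZZ[\![q]\!]$; since $v$ is non-archimedean and $h$ has integer coefficients with $h'(0)=1$, Hensel's lemma produces $s\in\Lbar_v$ with $|s|_v=|u|_v<1$ and $h(s)=u$. Choosing $t\in\Lbar_v$ with $t^N=s$ (so $t\in B_v$), the point $Q:=\psi_{I,v}(t)$ satisfies $j^{-1}(Q)=h(t^N)=h(s)=u$, i.e.\ $j(Q)=j(P)$. Finally, $j\colon X(N)_{\QQ(\zeta_N)}\to\PP^1_{\QQ(\zeta_N)}$ is a finite Galois cover with group $\SL_2(\ZZ/N\ZZ)/\{\pm I\}$ — by Lemma~\ref{L:Shimura basics 2}, this group acts faithfully on $\calF_N$ with fixed field $\QQ(\zeta_N)(j)$ — so the automorphisms $\iota_A$ ($A\in\SL_2(\ZZ/N\ZZ)$) form its deck group and act transitively on the geometric fibre $j^{-1}(j(P))(\Lbar_v)$; picking $A$ with $\iota_A(Q)=P$ gives $P=\psi_{A,v}(t)\in\Omega_{A\cdot c_\infty,v}$.

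\emph{Part (iii).} I would reduce to $\CC$ by fixing an isomorphism $\bar\sigma\colon\Lbar_v\xrightarrow{\sim}\CC$ of valued fields as in the proof of Proposition~\ref{P:specializations}, so that $\psi_{I,v}=\bar\sigma_*^{-1}\circ\psi\circ\bar\sigma$ with $\psi\colon B\to X(N)(\CC)=\Gamma(N)\backslash\calH^*$ the map of that proof; then $\bigcup_c\Omega_{c,v}=X(N)(\Lbar_v)$ is equivalent to $\bigcup_A\phi_A\bigl(\psi(\{|t|\le e^{-\pi\sqrt3/N}\})\bigr)=X(N)(\CC)$, where $\phi_A:=\bar\sigma_*\circ\iota_A\circ\bar\sigma_*^{-1}$. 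Each $\phi_A$ is an automorphism of $X(N)_\CC$ commuting with $j$, and $A\mapsto\phi_A$ factors injectively through $\SL_2(\ZZ/N\ZZ)/\{\pm I\}$, so $\{\phi_A:A\in\SL_2(\ZZ/N\ZZ)\}$ has exactly $\deg j$ elements and is therefore the full deck group of the Galois cover $j\colon X(N)_\CC\to\PP^1_\CC$ — classically, $\SL_2(\ZZ/N\ZZ)/\{\pm I\}$ acting on $\Gamma(N)\backslash\calH^*$ by $\Gamma(N)x\mapsto\Gamma(N)\gamma x$. Now $\psi(\{|t|\le e^{-\pi\sqrt3/N}\})=\{\Gamma(N)\tau:\tau\in\calH,\ \operatorname{Im}\tau\ge\sqrt3/2\}\cup\{c_\infty\}$, since $\tau\mapsto e^{2\pi i\tau/N}$ carries $\{\operatorname{Im}\tau\ge\sqrt3/2\}\cup\{\infty\}$ onto the closed disk of radius $e^{-\pi\sqrt3/N}$; applying all $\gamma\in\SL_2(\ZZ)$ and using that the standard fundamental domain lies in $\{\operatorname{Im}\tau\ge\sqrt3/2\}$ and that $\SL_2(\ZZ)$ is transitive on $\QQ\cup\{\infty\}$, the union is $\Gamma(N)\backslash\calH$ together with all cusps, i.e.\ all of $X(N)(\CC)$.

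I expect the main obstacle to be the $\supseteq$ inclusion in~(ii): beyond the $v$-adic inversion of $h$, it hinges on $j\colon X(N)\to\PP^1$ being a Galois cover whose deck group acts transitively on \emph{every} geometric fibre — including those ramified over $j\in\{0,1728\}$ — which is what lets the auxiliary point $Q$ be moved onto the prescribed point $P$. A secondary point needing care is the bookkeeping in~(i): pinning down $\operatorname{Stab}(c_\infty)$ and extracting the identity $\psi_{T^k,v}(t)=\psi_{I,v}(\zeta_N^k t)$ from the uniqueness clause of Proposition~\ref{P:specializations}.
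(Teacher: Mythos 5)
Your proof is correct and follows essentially the same route as the paper: part (i) via the $\zeta_N^k$-twist identity for $\psi$ extracted from the uniqueness clause of Proposition~\ref{P:specializations}, and parts (ii)--(iii) via the $\SL_2(\ZZ/N\ZZ)$-equivariance $\psi_{A,v}=\iota_A\circ\psi_{I,v}$ together with transitivity of the deck group on fibers of $j$, reducing to inverting the $q$-expansion of $j$ (finite $v$) or to the classical fundamental-domain bound $\operatorname{Im}\tau\ge\sqrt3/2$ (infinite $v$). The only cosmetic differences are that the paper packages the fiber-transitivity once as the identity $\calS=\{P:j(P)\in j(\calS)\}$ and then simply computes $j(\calS)$, and that for the $v$-adic inversion it cites Silverman (Advanced Topics, Ch.~V, Lemma~5.1) rather than appealing to Hensel/power-series inversion directly, but the substance is identical.
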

\begin{proof}
Consider any integer $b$.  We have $e^{2\pi i (\tau+b)/N}=\zeta_N^b q_N$.   So for a matrix $U= \pm \left(\begin{smallmatrix}1 & b \\0 & 1\end{smallmatrix}\right) \in \SL_2(\ZZ/N\ZZ)$ and a function $f\in \calF_N$, the $q$-expansion of $f*U$ is $\sum_{n\in \ZZ} a_n(f*U) q_N^n = \sum_{n\in \ZZ} a_n(f) \zeta_N^{bn} q_N^n$.  The same thing thus holds for the $q$-expansion of $f*U$ for any $f\in \Lbar_v(X(N))$.      

Take any matrices $A,A' \in \SL_2(\ZZ/N\ZZ)$ for which $A\cdot c_\infty=A'\cdot c_\infty$.    We have $A=A'\cdot U$ for some matrix $U$ of the form $\pm \left(\begin{smallmatrix}1 & b \\0 & 1\end{smallmatrix}\right)$.    For any $f\in \Lbar_v(X(N))$, the $q$-expansion of $f*A$ is $\sum_{n\in \ZZ} a_n(f*A) q_N^n = \sum_{n\in \ZZ} a_n((f*A')*U) q_N^n=\sum_{n\in \ZZ} a_n(f*A') \zeta_N^{bn} q_N^n$.   From the uniqueness of $\psi_{A,v}$ in Proposition~\ref{P:specializations}, we deduce that $\psi_{A,v}(t)=\psi_{A',v}(\zeta_N^b t)$ for all $t\in B_v$.   Part (\ref{L:Omega first i}) follows immediately since $|\zeta_N^b|_v=1$.  

Since $\psi_{A,v}=\iota_A\circ \psi_{I,v}$ by Proposition~\ref{P:specializations}, we find that $\calS:=\bigcup_{c\in \calC} \Omega_{c,v}$ is equal to the set \[\bigcup_{A\in \SL_2(\ZZ/N\ZZ)} \iota_A(\Omega_{c_\infty,v}).\]  In particular, $\calS \subseteq X(N)(\Lbar_v)$ is stable under the action of $\SL_2(\ZZ/N\ZZ)$.   Since $\SL_2(\ZZ/N\ZZ)$ acts transitively on the fibers of $j\colon X(N)(\Lbar_v)\to \PP^1(\Lbar_v)$, we have
\begin{align} \label{E:calS j}
\calS=\{ P \in X(N)(\Lbar_v) : j(P)\in j(\calS)\}.
\end{align}
Since $\calS$ contains all the cusps $\calC$, we need only compute the set $j(\calS-\calC) \subseteq \Lbar_v$ to prove (\ref{L:Omega first ii}) and (\ref{L:Omega first iii}).

First suppose that $v$ is finite.  We have $j(\calS-\calC) \subseteq \{a\in \Lbar_v: |a|_v>1\}$ by Proposition~\ref{P:specializations}(\ref{P:specializations b}).   Now take any $a\in \Lbar_v$ with $|a|_v>1$.  By \cite[Chapter V Lemma~5.1]{MR1312368}, there is a nonzero $t\in \Lbar_v$ with $|t|_v<1$ such that $a=J(t^N)$ with $J$ as in Proposition~\ref{P:specializations}(\ref{P:specializations b}).   By Proposition~\ref{P:specializations}(\ref{P:specializations b}), we have $a=J(t^N)=j(\psi_{I,v}(t)) \in j(\calS-\calC)$.   Therefore, $j(\calS-\calC) = \{a\in \Lbar_v: |a|_v>1\}$ and hence we obtain (\ref{L:Omega first ii}) from (\ref{E:calS j}).  

Finally suppose that $v$ is infinite.    Using (\ref{E:calS j}), we need only verify that $j(\calS-\calC) =\Lbar_v$ to prove part (\ref{L:Omega first iii}).    We have
\[
j(\calS-\calC)= \bigcup_{c\in \calC} j(\Omega_{c,v}-\{c\}) = \{J(t^N): t \in \Lbar_v-\{0\}, |t|_v\leq e^{-\pi\sqrt{3}/N}\},
\]
where the last equality uses the definition of $\Omega_{c,v}$ and Proposition~\ref{P:specializations}(\ref{P:specializations b}).  So after choosing an isomorphism $\Lbar_v \cong \CC$ that respects absolute values, we need only show that every $a\in \CC$ is of the form $J(t^N)$ for some nonzero $t\in \CC$ with $|t|\leq  e^{-\pi\sqrt{3}/N}$.  Take any $a\in \CC$.  There is a $\tau \in \calH$ for which $j(\tau)=a$, where we are viewing $j$ as a holomorphic function of the upper half-plane.   Using the action of $\SL_2(\ZZ)$ on $\calH$, we may further assume that $|\tau| \geq 1$ and $-1/2\leq \operatorname{Re}(\tau)\leq 1/2$.  In particular, $\operatorname{Im}(\tau) \geq \sqrt{3}/2$.   Define $u=e^{2\pi i \tau} \in \CC$.  We have $0<|u|\leq e^{-\pi \sqrt{3}}$.  Choose a $t\in \CC$ for which $u=t^N$.  We have $0<|t| \leq e^{-\pi \sqrt{3}/N}$ and $J(t^N)=J(u)=j(\tau)=a$.
\end{proof}

\begin{remark}
Consider the cusp $c:=c_\infty$.  The approach in \S3 of \cite{BiluParent} is to view $t_c:=q_N$ as a parameter on $X(N)$ that defines a $v$-analytic function on a neighborhood of $c$ in $X(N)(\Lbar_v)$ that maps $c$ to $0$.  They describe an open neighborhood of $c$ for which their function is defined and analytic; locally it is a homeomorphism whose inverse will agree with the restriction of our function $\psi_{I,v}$.   
Bilu and Parent take a similar approach for any cusp of a general modular curve $X_G$.   In the setting of this section, our set $\Omega_{c,v}$ will agree with that in \S3 of \cite{BiluParent} when $v$ is finite  (our sets are larger when $v$ is infinite). 
\end{remark}

\subsection{The sets $\Omega_{c,v}$ for a general modular curve}
\label{SS:nearby 3}

Take any subgroup $G$ of $\GL_2(\ZZ/N\ZZ)$.   The modular curve $X_G$ is defined over $K_G \subseteq \QQ(\zeta_N) \subseteq L$.   Let $\pi\colon X(N)_{\QQ(\zeta_N)} \to (X_G)_{\QQ(\zeta_N)}$ be the natural morphism corresponding to the inclusion of function fields $\QQ(\zeta_N)(X_G) \subseteq \calF_N$.  Let $\calC$ be the set of cusps in $X(N)(\QQ(\zeta_N))$.

Fix a place $v$ of $L$.  For each cusp $c\in \calC_G$, we define the subset
\[
\Omega_{c,v} := \pi\Big(\bigcup_{c' \in \calC\,\pi(c')=c} \Omega_{c',v}\Big)
\]
of $X_G(\Lbar_v)$, where the sets $\Omega_{c',v}\subseteq X(N)(\Lbar_v)$ are from \S\ref{SS:XN specializations}.   The set $\Omega_{c,v}$ contains $c$ and no other cusps.

\begin{lemma}  \label{L:union of neighborhoods}
For any place $v$ of $L$, we have
\[
\bigcup_{c \in \calC_G} \Omega_{c,v}=
\begin{cases}
\{P \in X_G(\Lbar_v): P \text{ is a cusp or }|j(P)|_v > 1\} &\text{ if $v$ is finite,}\\
X_G(\Lbar_v) &\text{ if $v$ is infinite.}
\end{cases}
\]
\end{lemma}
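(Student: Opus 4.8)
The plan is to reduce the statement for $X_G$ to the corresponding statement for $X(N)$, which is Lemma~\ref{L:Omega first}(\ref{L:Omega first ii})--(\ref{L:Omega first iii}), via the finite surjective morphism $\pi\colon X(N)_{\QQ(\zeta_N)}\to (X_G)_{\QQ(\zeta_N)}$. By definition, $\bigcup_{c\in\calC_G}\Omega_{c,v}=\pi\big(\bigcup_{c'\in\calC}\Omega_{c',v}\big)$, since each $\Omega_{c,v}$ with $c\in\calC_G$ is by construction the $\pi$-image of the union of those $\Omega_{c',v}$ with $c'\in\calC$ lying over $c$, and every cusp $c'$ of $X(N)$ maps to some cusp of $X_G$. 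Here I am using the base change to $\Lbar_v$: the morphism $\pi$ induces a continuous surjective map $X(N)(\Lbar_v)\to X_G(\Lbar_v)$ on $\Lbar_v$-points (surjectivity because $\pi$ is finite and $\Lbar_v$ is algebraically closed), and it is compatible with the two $j$-maps, i.e.\ $j\circ\pi=j$.

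First I would treat the finite place case. By Lemma~\ref{L:Omega first}(\ref{L:Omega first ii}), $\bigcup_{c'\in\calC}\Omega_{c',v}=\{Q\in X(N)(\Lbar_v): Q\text{ is a cusp or }|j(Q)|_v>1\}$. Applying $\pi$ and using $j\circ\pi=j$ together with the surjectivity of $\pi$ on $\Lbar_v$-points, the image is exactly $\{P\in X_G(\Lbar_v): P\text{ is a cusp or }|j(P)|_v>1\}$: indeed, $\pi$ maps cusps of $X(N)$ onto cusps of $X_G$ (this is the equality $\pi(\calC)=\calC_G$ from the proof of Lemma~\ref{L:cusps of X(N)}), and for a non-cusp $P\in X_G(\Lbar_v)$ with $|j(P)|_v>1$ any preimage $Q\in X(N)(\Lbar_v)$ satisfies $|j(Q)|_v=|j(P)|_v>1$ and is not a cusp, hence lies in the left-hand union; conversely $|j(\pi(Q))|_v=|j(Q)|_v$ shows the image is contained in the claimed set. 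The infinite place case is even simpler: Lemma~\ref{L:Omega first}(\ref{L:Omega first iii}) gives $\bigcup_{c'\in\calC}\Omega_{c',v}=X(N)(\Lbar_v)$, and since $\pi$ is surjective on $\Lbar_v$-points we get $\bigcup_{c\in\calC_G}\Omega_{c,v}=\pi(X(N)(\Lbar_v))=X_G(\Lbar_v)$.

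The main point requiring a little care — and the step I would write out most carefully — is the surjectivity of the map $X(N)(\Lbar_v)\to X_G(\Lbar_v)$ induced by $\pi$: this follows because $\pi$ is a finite morphism of smooth projective curves over $\QQ(\zeta_N)$, hence remains finite and surjective after base change to $\Lbar_v$, and a finite surjective morphism of varieties over an algebraically closed field is surjective on points. One should also note explicitly that $\Lbar_v$ is algebraically closed (it is the algebraic closure of the local field $L_v$). Everything else is formal bookkeeping with the relation $j\circ\pi=j$ and the identity $\pi(\calC)=\calC_G$ already recorded in \S\ref{S:first modular curve}.
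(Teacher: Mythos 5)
Your proof is correct and follows the same route the paper takes: decompose $\bigcup_{c\in\calC_G}\Omega_{c,v}$ as $\pi\big(\bigcup_{c'\in\calC}\Omega_{c',v}\big)$ and appeal to Lemma~\ref{L:Omega first}. The paper's proof is a one-liner that leaves the surjectivity of $\pi$ on $\Lbar_v$-points, the compatibility $j\circ\pi=j$, and $\pi(\calC)=\calC_G$ implicit; you have simply spelled these out, which is fine.
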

\begin{proof}
We have $\bigcup_{c \in \calC_G} \Omega_{c,v}=\pi(\bigcup_{c'\in \calC} \Omega_{c',v})$.   The lemma now follows from Lemma~\ref{L:Omega first}.
\end{proof}

\section{Properties of $\varphi$} \label{S:desired varphi}

Fix an integer $N>2$ and let $G$ be a subgroup of $\GL_2(\ZZ/N\ZZ)$ containing $-I$.  Let $\mu$ be the degree of the morphism $j\colon X_G\to \PP^1_{K_G}$.  The group $\Gal_{K_G}$ acts on the set of cusps $\calC_G$ of $X_G$ by Lemma~\ref{L:cusps of X(N)}(\ref{L:cusps of X(N) ii}).

Let $\Sigma$ be a proper subset of $\calC_G$ that is stable under the $\Gal_{K_G}$-action.  
Let $m$ be the smallest positive integer for which $m\sum_{c\in \calC_G-\Sigma} w_c > g$, where $g$ is the genus of $X_G$.   Fix a number field $L\supseteq \QQ(\zeta_N)$.   Define the real numbers
\[
\beta:=2 (2^3 4.5^{36m}  N^{108m+15} m^{72m+1})^{m\mu+1}   4.5^{12m} N^{36m+4},
\]
$C:=96.6 \cdot 0.1^{24m} N^{90m+4} $ and $C':=22.16  N^{144m+7} 0.024^{24m}$.  

The following theorem gives the existence of a rational function $\varphi \in K_G(X_G)$ suitable for our application of Runge's method; it makes use of the sets $\Omega_{c,v} \subseteq X_G(\Lbar_v)$ from \S\ref{SS:nearby 3}.   The proof will given in \S\ref{S:nouveau proof} after several sections discussing modular forms.    

\begin{thm} \label{T:nouveau varphi}
Fix notation and assumptions as above. There is a nonconstant function $\varphi \in K_G(X_G)$ that satisfies the following properties:  
\begin{alphenum}
\item \label{T:nouveau varphi A}
The function $\varphi$ is integral over $\ZZ[j]$, i.e., $\varphi$ is the root of a monic polynomial with coefficients in $\ZZ[j]$.
\item \label{T:nouveau varphi B}
The function $\varphi$ has no poles away from the cusps of $X_G$.

\item \label{T:nouveau varphi C}
Take any cusp $c\in\Sigma$.  The function $\varphi$ is regular at $c$.  Moreover, $\varphi(c)$ lies in $\ZZ[\zeta_N]$ and satisfies $|\varphi(c)|_v \leq \beta m^{24m}$ for all infinite places $v$ of $L$.

\item \label{T:nouveau varphi D}
For any cusp $c\in \Sigma$, place $v$ of $L$ and point $P\in \Omega_{c,v}-\{c\}$, we have 
\[
|\varphi(P)-\varphi(c)|_v \leq
\begin{cases}
|j(P)|_v^{-1/w_c} & \text{ if $v$ is finite,}\\
|j(P)|_v^{-1/w_c}\cdot  \beta C & \text{ if $v$ is infinite and $|j(P)|_v >3500$,}\\
\beta C/2 & \text{ if $v$ is infinite.}
\end{cases}
\]

\item \label{T:nouveau varphi E}
Let $K$ be any number field with $K_G\subseteq K \subseteq L$ and let $\Sigma'$ be a $\Gal_{K}$-orbit of $\Sigma$.  Let $w$ be the integer for which $w_c=w$ for all $c\in \Sigma'$.  Then there is a nonzero $\xi\in \OO_K$ such that the following hold:
\begin{itemize}
\item
We have $|\xi|_v \leq (\beta C')^{w|\Sigma'|}$ for all infinite place $v$ of $L$.  
\item
Consider any point $P\in Y_G(K)$ for which $\varphi(P)=\varphi(c)$ and $P\in \Omega_{c,v}$   for some cusp $c\in \Sigma'$ and place $v$ of $L$.  Then
\[
|\xi|_v \leq
\begin{cases}
|j(P)|_v^{-w} & \text{ if $v$ is finite,}\\
|j(P)|_v^{-w} \cdot (\beta C')^{w|\Sigma'|} & \text{ if $v$ is infinite and $|j(P)|_v>3500$.}
\end{cases}
\]
\end{itemize}
\end{alphenum}
\end{thm}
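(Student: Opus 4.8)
The plan is to take $\varphi$ of the form $f/\Delta^m$, where $f$ is a carefully chosen modular form of weight $12m$ on $\Gamma(N)$ with $f/\Delta^m\in K_G(X_G)$. The preparatory input (the modular-forms sections promised in the overview) is: the finite-dimensional $K_G$-space $M_{12m,G}$ of such $f$; an explicit spanning set of $M_{12m,G}$ built from products of weight-$1$ Eisenstein series for $\Gamma(N)$; and, from it, a basis of $M_{12m,G}$ whose $q$-expansion at every cusp of $X(N)$ has coefficients in $\ZZ[\zeta_N]$ that are explicitly bounded for every absolute value of $\QQ(\zeta_N)$. Inside $M_{12m,G}$ sits the subspace $W_m$ of those $f$ for which $f/\Delta^m$ is regular at every cusp of $\Sigma$; the definition of $m$ (namely $m\sum_{c\in\calC_G-\Sigma}w_c>g$) is exactly the hypothesis under which Riemann--Roch gives $\dim_{K_G}W_m\ge 2$, so $W_m$ is not contained in the line $K_G\Delta^m$. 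Feeding the integral lattice coming from the explicit basis into a Siegel-lemma argument then produces an $f\in W_m-K_G\Delta^m$ whose $q$-expansions at all cusps have $\ZZ[\zeta_N]$-coefficients bounded essentially by $\beta$, and $\varphi:=f/\Delta^m$ is the nonconstant function sought.

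Granting such a $\varphi$, properties (a)--(d) are routine. Part (b) is immediate: $\Delta$ is holomorphic and nowhere vanishing on $\calH$ and $f$ is holomorphic, so $\varphi$ has poles only at cusps. For (a): since $\Delta=q\prod_{n\ge1}(1-q^n)^{24}$ is $q$ times a unit of $\ZZ[\![q]\!]$, the $q$-expansions of $\varphi$ at all cusps of $X(N)$ have coefficients in $\ZZ[\zeta_N]$; a rational function with poles only over $j=\infty$ whose $q$-expansions at every cusp are integral is integral over $\ZZ[j]$, because its minimal polynomial over $\QQ(j)$ has coefficients whose $q$-expansions (obtained by substituting the integral power series $J(q)$ for $j$) are integral, forcing those coefficients to lie in $\ZZ[j]$. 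For (c): regularity of $\varphi$ at $c\in\Sigma$ is the defining condition on $W_m$, and expanding $\varphi$ in the local parameter $q_{w_c}$ exhibits $\varphi(c)$ as a $q$-expansion coefficient of $f$, so $\varphi(c)\in\ZZ[\zeta_N]$ with the stated archimedean bound. For (d): write $\varphi-\varphi(c)=\sum_{k\ge1}e_k q_{w_c}^{\,k}$ near $c$ with $e_k\in\ZZ[\zeta_N]$; on $\Omega_{c,v}$ one has $|q_{w_c}(P)|_v=|j(P)|_v^{-1/w_c}$ for finite $v$ by Proposition~\ref{P:specializations}(\ref{P:specializations b}) and $|q_{w_c}(P)|_v\le e^{-\pi\sqrt3/w_c}$ for infinite $v$; the finite-place bound is then immediate from $|e_k|_v\le1$, and the archimedean bounds follow by estimating the tail $\sum_{k\ge1}|e_k|_v|q_{w_c}(P)|_v^{\,k}$ using the explicit bounds on the $e_k$ (obtained by convolving the coefficient bounds for $f$ with those for $\Delta^{-m}$), the cutoff $|j(P)|_v>3500$ marking where $J$ is close enough to $q^{-1}$ that $|j(P)|_v^{-1/w_c}$ and $|q_{w_c}(P)|_v$ are comparable; the constant $C$ is the resulting bookkeeping.

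The substantive part is (e). I would fix the $\Gal_K$-orbit $\Sigma'\subseteq\Sigma$, whose cusps all have width $w$, and build $\xi$ by eliminating the local parameter between $\varphi-\varphi(c)$ and $j$ at the cusps $c\in\Sigma'$: near $c$ one has $\varphi-\varphi(c)=\sum_{k\ge e}a^{(c)}_k q_w^{\,k}$ with $a^{(c)}_k\in\ZZ[\zeta_N]$, $a^{(c)}_e\ne0$, and $1/j=q_w^{\,w}(1+744q_w^{\,w}+\cdots)\in\ZZ[\![q_w]\!]$, so a resultant-type elimination in $q_w$ of these two (at its crudest, a suitable power of $a^{(c)}_e$) produces an element of $\ZZ[\zeta_N]$ that is forced to be $v$-adically small whenever $P\in\Omega_{c,v}$ satisfies $\varphi(P)=\varphi(c)$. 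Choosing the local parameters $\Gal_K$-equivariantly and forming the right product over $\Sigma'$ of such elements -- inserting, if $\varphi$ does not separate the cusps of $\Sigma'$, a power of $\operatorname{disc}\big(\prod_{c\in\Sigma'}(T-\varphi(c))\big)$ -- yields a $\Gal_K$-invariant $\xi\in\OO_K$, and its archimedean size $(\beta C')^{w|\Sigma'|}$ is read off from the coefficient bounds already established. For the finite-place inequality the key point is that, since $\varphi$ is defined over $K_G\subseteq K$ and $\Sigma'$ is a single $\Gal_K$-orbit, $\varphi(P)=\varphi(c)$ for one $c\in\Sigma'$ forces $\varphi(P)=\varphi(c')$ for every $c'\in\Sigma'$ and $P\in\Omega_{c',v'}$ for a $\Gal_K$-conjugate place $v'\mid p$; combining the vanishing relations $\sum_{k\ge e}a^{(c')}_k q_w(P)^k=0$ (legitimate because $P\notin\calC_G$, so $q_w(P)\ne0$) at these places with the identity $|j(P)|_v=|q_w(P)|_v^{-w}$ on $\Omega_{c',v'}$ and the integrality bound $|\cdot|_v\le1$ at finite $v$ yields $|\xi|_v\le|j(P)|_v^{-w}$; the archimedean estimate with the $3500$ cutoff is treated as in (d). The main obstacle is precisely this construction: arranging $\xi$ to descend to $\OO_K$ with a controlled archimedean size while still forcing the sharp finite-place bound requires choosing the exponents and the orbit product just so, and the explicit constants $\beta$, $C$, $C'$ are nothing but the accumulated bookkeeping of this step together with the error terms from $\Delta^{-m}$, the weight-$1$ Eisenstein $q$-expansions, and the Siegel-lemma estimate.
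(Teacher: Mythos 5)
Your overall architecture matches the paper's exactly: $\varphi=f/\Delta^m$ for a weight-$12m$ form $f\in M_{12m,G}$, using Khuri-Makdisi's Eisenstein series to span $M_{12m,G}$, Theorem~\ref{T:small basis} to get an integral basis with bounded coefficients, Riemann--Roch to ensure $\dim_{K_G}W_m\ge 2$, and a Faltings-style Siegel lemma to pick $f\in W_m-K_G\Delta^m$ with controlled coefficients. Your treatments of (a)--(d) are the paper's arguments; in particular, the paper proves (a) by forming $Q(x)=\prod_{A\in R}(x-\varphi*A)$ over coset representatives $R$ and checking its coefficients have integral $q$-expansions, which is the same observation you invoke.

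The place where your sketch wobbles is (e). Your core idea — take the leading $q_w$-coefficient $\gamma$ of $(\varphi-\varphi(c))*A$, raise it to a suitable power, and form a $\Gal_K$-product over $\Sigma'$ — is exactly what the paper does: it sets $r:=\ord_c(\varphi-\varphi(c))$, observes that $h:=(\varphi-\varphi(c))^{w}j^{r}$ is regular and nonzero at $c$ with $h(c)=\gamma^{w}\in K(c)$ (the exponent $w$ is forced precisely so that the ambiguity $q_w\mapsto \zeta q_w$ coming from the choice of $A$ cancels), and defines $\xi:=N_{K(c)/K}(\gamma^{w})\in\OO_K$. Your ``insert a power of $\operatorname{disc}\big(\prod_{c\in\Sigma'}(T-\varphi(c))\big)$ if $\varphi$ does not separate the cusps of $\Sigma'$'' fallback, however, is a concrete mistake: in the only case that matters ($\varphi(c)\in K$ for one, hence all, $c\in\Sigma'$, since $\Gal_K$ acts transitively on $\Sigma'$ and $\varphi$ is $K$-rational) the values $\varphi(c')$ coincide for all $c'\in\Sigma'$, so that discriminant is identically zero and inserting it would produce $\xi=0$, contradicting the requirement that $\xi$ be nonzero. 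There is no separation problem to fix: the paper simply shows $\gamma^{w}=h(c)$ is the same for every $c\in\Sigma'$, so the norm $\xi=N_{K(c)/K}(\gamma^{w})$ is independent of the chosen $c$; and when $\varphi(c)\notin K$ one takes $\xi=1$ since no $P\in Y_G(K)$ can satisfy $\varphi(P)=\varphi(c)$. Replacing your discriminant hedge by this observation closes the gap; everything else in your sketch lines up with the paper's proof.
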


For later use, we now gives some simple bounds on $m$ and $\mu$ in terms of $N$.

\begin{lemma} \label{L:trivial bounds}
Fix notation as above.  
\begin{romanenum}
\item \label{L:trivial bounds i}
We have $\mu \leq \tfrac{1}{2} N^3$, $\mu+1 \leq \tfrac{29}{54} N^3$ and $\mu+2 \leq \tfrac{31}{54} N^3$.
\item  \label{L:trivial bounds ii}
We have $m \leq \tfrac{1}{24} N^3$. 
\end{romanenum}
\end{lemma}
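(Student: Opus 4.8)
The plan is to reduce everything to elementary estimates on $N$, using only the structural facts already recorded: that $\mu = [\SL_2(\ZZ/N\ZZ):\bbar{G}\cap\SL_2(\ZZ/N\ZZ)]$ divides $|\SL_2(\ZZ/N\ZZ)/\{\pm I\}|$, and that $m$ is defined as the least positive integer with $m\sum_{c\in\calC_G-\Sigma} w_c > g$.

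First I would handle part (i). Since $-I\in G$, the index $\mu$ divides $|\SL_2(\ZZ/N\ZZ)|/2$, and from the standard formula $|\SL_2(\ZZ/N\ZZ)| = N^3\prod_{p\mid N}(1-p^{-2})$ one gets $\mu \le |\SL_2(\ZZ/N\ZZ)|/2 \le N^3/2$, giving the first inequality. For the sharper bounds $\mu+1 \le \tfrac{29}{54}N^3$ and $\mu+2 \le \tfrac{31}{54}N^3$, the point is that $\mu$ is not merely $\le N^3/2$ but is a proper divisor in the relevant range, or one simply checks that $\tfrac12 N^3 + 2 \le \tfrac{31}{54}N^3$ is equivalent to $2 \le \tfrac{4}{54}N^3 = \tfrac{2}{27}N^3$, i.e.\ $N^3 \ge 27$, which holds since $N > 2$; similarly $\tfrac12 N^3 + 1 \le \tfrac{29}{54}N^3$ reduces to $N^3 \ge \tfrac{54}{2} = 27$. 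So part (i) is entirely a matter of combining $\mu\le N^3/2$ with the inequality $N\ge 3$.

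For part (ii), I would bound $m$ via its defining inequality. By minimality of $m$, either $m=1$, or $(m-1)\sum_{c\in\calC_G-\Sigma} w_c \le g$, so $m \le 1 + g/\sum_{c\in\calC_G-\Sigma} w_c \le 1 + g$ since the sum is a positive integer ($\Sigma$ is proper, so $\calC_G-\Sigma\ne\emptyset$, and each $w_c\ge 1$). Now I need $g+1 \le \tfrac{1}{24}N^3$, equivalently $g \le \tfrac{1}{24}N^3 - 1$. Here I would invoke the classical genus formula for $X_G$: writing $\mu = [\SL_2(\ZZ):\Gamma_G]$, the genus satisfies $g = 1 + \tfrac{\mu}{12} - \tfrac{\varepsilon_2}{4} - \tfrac{\varepsilon_3}{3} - \tfrac{\varepsilon_\infty}{2}$ where $\varepsilon_2,\varepsilon_3,\varepsilon_\infty \ge 0$ count elliptic points and cusps, so $g \le 1 + \mu/12 \le 1 + N^3/24$. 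This is off by the additive $2$, so I'd need to be slightly more careful: since $\Gamma_G$ has at least one cusp, $\varepsilon_\infty \ge 1$, giving $g \le \tfrac12 + \mu/12 \le \tfrac12 + N^3/24$, and then $m \le 1+g \le \tfrac32 + N^3/24 \le \tfrac{1}{24}N^3$ would require $\tfrac32 \le 0$, which fails. The cleaner route is to note $g \le \mu/12 + 1 - \varepsilon_\infty/2$ and that $\varepsilon_\infty = |\calC_G| \ge |\calC_G - \Sigma| \ge 1$ forces $m \le 1 + g/|\calC_G-\Sigma|$ with the two quantities balancing: if $g$ is large then so is $\mu$ hence so is $\varepsilon_\infty$ relative to $g$, so I'd track the dependence carefully rather than discarding $\varepsilon_\infty$ prematurely. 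The main obstacle is precisely this: squeezing out the right constant $\tfrac{1}{24}$ (and the additive shifts in part (i)) requires using the genus formula with the cusp-count term retained, not the crude bound $g \le \mu/12 + 1$; once the bookkeeping is done honestly the inequality $m \le \tfrac{1}{24}N^3$ should follow for all $N > 2$.
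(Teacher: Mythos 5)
Part (i) of your proposal is correct and matches the paper's argument: combine $\mu\le N^3/2$ (which follows from $-I\in G$, so $\mu$ divides $|\SL_2(\ZZ/N\ZZ)/\{\pm I\}|$) with $N\ge 3$, and the additive shifts absorb into the slack $\tfrac{29}{54}-\tfrac12=\tfrac{1}{27}$ and $\tfrac{31}{54}-\tfrac12=\tfrac{2}{27}$.

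Part (ii), however, has a genuine gap: you correctly identify that the crude chain $m\le g+1\le \mu/12+3/2\le N^3/24+3/2$ overshoots by $3/2$, and you correctly observe that the cusp term $\varepsilon_\infty$ must be exploited, but you never actually close the argument. The sentence ``if $g$ is large then so is $\mu$ hence so is $\varepsilon_\infty$ relative to $g$'' is not a proof --- for subgroups $G$ with few cusps, $\varepsilon_\infty$ can be as small as $1$ while $g$ grows linearly in $\mu$, so the balancing you invoke does not hold in general. The paper avoids this by a three-way case split, none of which appears in your sketch: (a) for $N\le 5$ every congruence subgroup of level $\le 5$ has genus $0$, so $m=1\le N^3/24$; (b) for $N\ge 6$, if $\mu$ is a \emph{proper} divisor of $|\SL_2(\ZZ/N\ZZ)/\{\pm I\}|$ then $\mu\le N^3/4$, and $m\le\mu/12+3/2\le N^3/48+3/2\le N^3/24$ because $3/2\le N^3/48$ once $N\ge 6$; (c) otherwise $\Gamma_G=\pm\Gamma(N)$, and the explicit genus formula $g=1+\mu(N-6)/(12N)$ gives the extra $-N^2/4$ term needed to absorb the additive constant. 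The key idea you are missing is the dichotomy on whether $\mu$ is the full value or a proper divisor of it --- this is what supplies the extra slack, not any direct relation between $g$ and $\varepsilon_\infty$. Without this (or an equivalent device), the constant $\tfrac{1}{24}$ cannot be reached from the bound $g+1\le\mu/12+3/2$ alone.
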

\begin{proof}
Since $G$ contains $-I$, we have $\mu=[\SL_2(\ZZ/N\ZZ):\SL_2(\ZZ/N\ZZ) \cap G]$.   In particular, $\mu$ is a divisor of $|\SL_2(\ZZ/N\ZZ)/\{\pm I\}|$ and thus $\mu\leq N^3/2$.  Take any nonnegative real numbers $b$.  From our bound on $\mu$ and $N\geq 3$, we have $\mu+b\leq (1/2+b/27)N^3$.  Part (\ref{L:trivial bounds i}) is obtained by taking specific values of $b$.   

We now bound $m$.  We have $m\leq g+1$ since $\sum_{c\in \calC_G-\Sigma} w_c \geq 1$.  For every congruence subgroup $\Gamma\subseteq \SL_2(\ZZ)$ of level at most $5$, $\calX_\Gamma$ has genus $0$, see~\cite{MR2016709}.   So if $N\leq 5$, then $g=0$ and hence $m=1$; the bound $m\leq \tfrac{1}{24} N^3$ is immediate since $N\geq 3$.  We can now assume that $N\geq 6$.   By \cite[Proposition~1.40]{MR1291394}, we have $g+1\leq \mu/12+3/2$, where we have used that $X_G$ has at least one cusp.  If $\mu$ is a proper divisor of $|\SL_2(\ZZ/N\ZZ)/\{\pm I\}|$, then $\mu\leq N^3/4$ and hence
\[
m\leq g+1 \leq \tfrac{1}{48} N^3 + \tfrac{3}{2} \leq \tfrac{1}{48} N^3 + \tfrac{3}{2\cdot 6^3} N^3 \leq \tfrac{1}{24} N^3.
\]
 Finally assume that $\mu=|\SL_2(\ZZ/N\ZZ)/\{\pm I\}|$ and hence $G\cap \SL_2(\ZZ/N\ZZ)=\SL_2(\ZZ/N\ZZ)$.  In this case, we have $g=1+\mu(N-6)/(12N)$, cf.~\cite[Equation (1.6.4)]{MR1291394}.   Therefore, 
\[
m\leq g+1 \leq 2 + \tfrac{1}{2} N^3 \tfrac{N-6}{12N} \leq \tfrac{1}{24} N^3 - \tfrac{1}{4} N^2 + 2 \leq \tfrac{1}{24} N^3. \qedhere
\]
\end{proof}

\section{Proof of Theorem~\ref{T:main}} \label{S:main proof}

Take any point $P$ in $Y_G(K)$ with $j(P)\in \OO_{K,S}$.  Let $S'$ be the set of places $v$ of $K$ for which $v$ is infinite or $|j(P)|_v>1$.  We have $j(P)\in \OO_{K,S'}$ and $|S'|\leq |S| < \mathfrak{c}_{G,K}$.   So without loss of generality, we may assume that $S=S'$, i.e., a finite place $v$ of $K$ lies in $S$ if and only if $|j(P)|_v>1$.

We have $K_G\subseteq \QQ(\zeta_N)\subseteq \CC$.  Without loss of generality, we may assume that $K_G\subseteq K \subseteq \CC$.  Let $\Kbar$ be the algebraic closure of $K$ in $\CC$ and define $\Gal_K:=\Gal(\Kbar/K)$.   Define the field $L:=K(\zeta_N)$.  Let $\calC_G$ be the set of cusps in $X_G(\CC)$; we have $\calC_G\subseteq X_G(L)$ by Lemma~\ref{L:cusps of X(N)}(\ref{L:cusps of X(N) ii}).  

Take any $v\in S$; it is a place of $K$.  We choose a place of $L$ that extends $v$ which, by abuse of notation, we also denote by $v$.   This choice of place of $L$ will ultimately not matter since we are interested in $|j(P)|_v$ which does not depend on the choice of extension since $j(P)\in K$.   As in \S\ref{S:nearby}, we define a subset $\Omega_{c,v}\subseteq X_G(\Lbar_v)$ for each cusp $c\in \calC_G$.    By Lemma~\ref{L:union of neighborhoods}, there is a cusp $c_v\in \calC_G$ for which $P$ lies in $\Omega_{c_v,v}$.  Intuitively, $c_v$ is a cusp that is ``nearby'' $P$ in $X_G(\Lbar_v)$.

Let $\Sigma$ be the minimal subset of $\calC_G$ that contains $\{c_v: v\in S\}$ and is stable under the $\Gal_K$-action.   The action of $\Gal_K$ on $\Sigma$ clearly has at most $|S|$ orbits.   We have $|S|<\mathfrak{c}_{G,K}$ by  assumption and hence $\Sigma$ is a proper subset of $\calC_G$.  The set $\Sigma$ is nonempty since $S$ is nonempty.

Let $m$ be the smallest positive integer for which $m\sum_{c\in \calC_G-\Sigma} w_c > g$, where $g$ is the genus of $X_G$.   Let $\varphi \in K_G(X_G)\subseteq K(X_G)$ be a nonconstant function satisfying all the properties of Theorem~\ref{T:nouveau varphi} with respect to the set $\Sigma$ and the field $L$. 
  
Let $\Sigma_1,\ldots, \Sigma_h$ be the $\Gal_K$-orbits of $\Sigma$.  For each $1\leq i \leq h$, the values $w_c$ agree as we vary over all $c\in \Sigma_i$; we denote this common integer by $w_i$.\\

We now fix an integer $1\leq i \leq h$.   Let $S_i$ be the set of places $v\in S$ such that $c_v\in \Sigma_i$ and such that $|j(P)|_v>3500$ if $v$ is infinite.     Our goal is to find an upper bound for $\sum_{v \in S_i} d_v \log |j(P)|_v$, where the integers $d_v$ are defined in \S\ref{SS:notation}.  Once this is done we will combine these bounds for all $i$ to obtain an upper bound for $h(j(P))$.
 
The function $\varphi$ is regular at all $c\in \Sigma_i$ by Theorem~\ref{T:nouveau varphi}(\ref{T:nouveau varphi C}). Define the function
\[
g_i:=\prod_{c\in \Sigma_i}(\varphi-\varphi(c)).
\]
We now give some basic properties of $g_i$.

\begin{lemma} \label{L:properties of gi}
\begin{romanenum}
\item \label{L:properties of gi i}
The rational function $g_i$ lies in $K(X_G)$ and any pole of $g_i$ is a cusp of $X_G$ that does not lie in the set $\Sigma$.   
\item \label{L:properties of gi ii}
We have $g_i(c)=0$ for all $c\in \Sigma_i$.
\item \label{L:properties of gi iii}
We have $\varphi(P)\in \OO_{K,S}$ and $g_i(P)\in \OO_{K,S}$.
\end{romanenum}
\end{lemma}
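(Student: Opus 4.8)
The plan is to verify the three claims of Lemma~\ref{L:properties of gi} directly from the properties of $\varphi$ furnished by Theorem~\ref{T:nouveau varphi}, keeping track of the Galois action to get the rationality statement in (i).

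For part (i), the key point is that $g_i$ is built from $\varphi$ and the constants $\varphi(c)$, $c \in \Sigma_i$, by a $\Gal_K$-equivariant construction. Since $\Sigma_i$ is a single $\Gal_K$-orbit and $\varphi \in K_G(X_G) \subseteq K(X_G)$ has coefficients fixed by $\Gal_K$, each $\sigma \in \Gal_K$ permutes the factors $\varphi - \varphi(c)$ among themselves: indeed $\sigma(\varphi - \varphi(c)) = \varphi - \varphi(\sigma c)$ because $\varphi$ is defined over $K$ and $j$ (hence the cusp $\sigma c$) is $\Gal_K$-stable as a set on $\Sigma_i$. Therefore $\sigma(g_i) = g_i$, so $g_i \in K(X_G)$. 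For the location of the poles, observe that a pole of $g_i$ must be a pole of some factor $\varphi - \varphi(c)$, hence a pole of $\varphi$; by Theorem~\ref{T:nouveau varphi}(\ref{T:nouveau varphi B}) this is a cusp, and by Theorem~\ref{T:nouveau varphi}(\ref{T:nouveau varphi C}) $\varphi$ is regular at every cusp in $\Sigma \supseteq \Sigma_i$, so the pole cannot lie in $\Sigma$.

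For part (ii), this is immediate: for any $c \in \Sigma_i$ the factor $\varphi - \varphi(c)$ of $g_i$ vanishes at $c$, and the remaining factors are regular at $c$ (again by Theorem~\ref{T:nouveau varphi}(\ref{T:nouveau varphi C})), so the product $g_i$ vanishes at $c$.

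For part (iii), I would first show $\varphi(P) \in \OO_{K,S}$. Since $P \in Y_G(K)$, $\varphi$ is defined over $K$, and $\varphi$ is integral over $\ZZ[j]$ by Theorem~\ref{T:nouveau varphi}(\ref{T:nouveau varphi A}), the value $\varphi(P)$ satisfies a monic polynomial relation with coefficients in $\ZZ[j(P)]$; as $j(P) \in \OO_{K,S}$ and $\OO_{K,S}$ is integrally closed in $K$, we get $\varphi(P) \in \OO_{K,S}$. (Here one also uses that $P$ is not a pole of $\varphi$: by Theorem~\ref{T:nouveau varphi}(\ref{T:nouveau varphi B}) the only poles of $\varphi$ are cusps, and $P \in Y_G(K)$ is not a cusp, so $\varphi(P)$ is a well-defined element of $K$.) Then $\varphi(c) \in \ZZ[\zeta_N] \subseteq \OO_{L,S}$ for each $c \in \Sigma_i$ by Theorem~\ref{T:nouveau varphi}(\ref{T:nouveau varphi C}), and $g_i(P) = \prod_{c \in \Sigma_i}(\varphi(P) - \varphi(c))$ lies in $\OO_{L,S} \cap K = \OO_{K,S}$, being a product of elements of the ring $\OO_{L,S}$ of $S$-integers that happens to lie in $K$ (by part (i)).

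The only mild subtlety — and the step I would be most careful about — is the rationality argument in part (i): one must confirm that $\Gal_K$ really does act on $\Sigma_i$ by permutations compatible with the relation $\sigma(\varphi(c)) = \varphi(\sigma c)$, which rests on $\varphi$ being defined over $K$ and $\Sigma_i$ being a genuine $\Gal_K$-orbit of cusps, both of which are built into the hypotheses. Everything else is a routine unwinding of the properties already proved for $\varphi$.
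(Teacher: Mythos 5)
Your proof is correct and takes essentially the same route as the paper: Galois stability of $\Sigma_i$ combined with $\varphi \in K(X_G)$ gives rationality, poles of $g_i$ trace back to poles of $\varphi$ hence cusps outside $\Sigma$ by Theorem~\ref{T:nouveau varphi}(\ref{T:nouveau varphi B})--(\ref{T:nouveau varphi C}), and $\varphi(P) \in \OO_{K,S}$ follows from integrality over $\ZZ[j]$. The only cosmetic difference is that the paper packages the product as $g_i = Q_i(\varphi)$ with $Q_i(x) := \prod_{c\in\Sigma_i}(x-\varphi(c)) \in \OO_K[x]$, which lets it conclude $g_i(P) = Q_i(\varphi(P)) \in \OO_{K,S}$ directly rather than passing through $S$-integers of the larger field $L$ and intersecting with $K$ as you do.
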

\begin{proof}
For any $\sigma\in \Gal_K$ and $c\in \Sigma_i$, we have $\sigma(\varphi(c))=\varphi(\sigma(c))$ since $\varphi \in K(X_G)$.   Since $\Sigma_i$ is stable under the $\Gal_K$-action, the polynomial $Q_i(x):=\prod_{c\in \Sigma_i}(x-\varphi(c))$ lies in $K[x]$.   Moreover, $Q_i(x)\in \OO_K[x]$ since $\varphi(c)$ is algebraic for all $c\in \Sigma$ by Theorem~\ref{T:nouveau varphi}(\ref{T:nouveau varphi C}).    Therefore, $g_i=Q_i(\varphi)$ is an element of $K(X_G)$.   Since $g_i=Q_i(\varphi)$, any pole of $g_i$ will also be a pole of $\varphi$.  Part (\ref{L:properties of gi i}) thus follows from Theorem~\ref{T:nouveau varphi}(\ref{T:nouveau varphi B}) and (\ref{T:nouveau varphi C}).  Part (\ref{L:properties of gi ii}) is immediate from the definition of $g_i$.

We have $g_i=Q_i(\varphi)$ and hence $g_i(P)=Q_i(\varphi(P))$; the functions are regular at $P$ since $P$ is not a cusp.  Since $Q_i(x)$ is a polynomial in $\OO_K[x]$, to prove (\ref{L:properties of gi iii}) it suffices to show that $\varphi(P)$ lies in $\OO_{K,S}$.  Since $\varphi$ and $P$ are defined over $K$, we have $\varphi(P)\in K$.  By property (\ref{T:nouveau varphi A}) of Theorem~\ref{T:nouveau varphi}, $\varphi$ is the root of a monic polynomial with coefficients in $\ZZ[j]$.  Therefore, $\varphi(P)$ is the root of a monic polynomial with coefficients in $\ZZ[j(P)]\subseteq \OO_{K,S}$.   We thus have $\varphi(P)\in \OO_{K,S}$ since $\OO_{K,S}$ is integrally closed.
\end{proof}

Define the numbers 
\[
\beta:=2 (2^3 4.5^{36m}  N^{108m+15} m^{72m+1})^{m\mu+1}   4.5^{12m} N^{36m+4}
\] 
and $C:=96.6 \cdot 0.1^{24m} N^{90m+4}$.  We now bound the absolute value of $g_i(P)$ at infinite places.  

\begin{lemma} \label{L:bound on gi infinite}
For any infinite place $v$ of $K$, we have 
\[
|g_i(P)|_v \leq \begin{cases}
(\beta C)^{|\Sigma_i|} \cdot |j(P)|_v^{-1/w_i} & \text{ if $v\in S_i$,}\\
(\beta C)^{|\Sigma_i|} & \text{ otherwise}.
\end{cases}
\]
\end{lemma}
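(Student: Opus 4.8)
The plan is to expand $g_i(P)=\prod_{c\in\Sigma_i}\bigl(\varphi(P)-\varphi(c)\bigr)$, which is legitimate since $\varphi$ is regular at $P$ (as $P\in Y_G(K)$ is not a cusp) and at each $c\in\Sigma_i\subseteq\Sigma$ by Theorem~\ref{T:nouveau varphi}(\ref{T:nouveau varphi C}), and then to bound the archimedean absolute value of each factor $\varphi(P)-\varphi(c)$, singling out the factor coming from the ``nearby'' cusp $c_v$ from the rest.

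First I would fix an infinite place $v$ of $K$ and extend it to an infinite place of $L$; since $S$ contains all infinite places we have $v\in S$, so Lemma~\ref{L:union of neighborhoods} supplies a cusp $c_v\in\calC_G$ with $P\in\Omega_{c_v,v}$, and $c_v\in\Sigma$ by the construction of $\Sigma$. Applying the unconditional archimedean case of Theorem~\ref{T:nouveau varphi}(\ref{T:nouveau varphi D}) to $c_v$ (valid because $P\neq c_v$) gives $|\varphi(P)-\varphi(c_v)|_v\le\beta C/2$, and combining this with $|\varphi(c_v)|_v\le\beta m^{24m}$ from Theorem~\ref{T:nouveau varphi}(\ref{T:nouveau varphi C}) yields the crude estimate $|\varphi(P)|_v\le\beta C/2+\beta m^{24m}$. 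Together with $|\varphi(c)|_v\le\beta m^{24m}$ for every $c\in\Sigma_i$, the triangle inequality then gives $|\varphi(P)-\varphi(c)|_v\le\beta C/2+2\beta m^{24m}$ for all $c\in\Sigma_i$; this is at most $\beta C$ once one checks $4m^{24m}\le C$, which is immediate from $m\le N^3/24$ (Lemma~\ref{L:trivial bounds}(\ref{L:trivial bounds ii})) and $N\ge3$, since then $4m^{24m}\le4(N^3/24)^{24m}=4N^{72m}/24^{24m}\le 96.6\cdot 0.1^{24m}N^{90m+4}$.

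With the per-factor bound $|\varphi(P)-\varphi(c)|_v\le\beta C$ in hand, the sharper statement follows by isolating the $c_v$-factor. If $v\notin S_i$, then either $c_v\notin\Sigma_i$, in which case all $|\Sigma_i|$ factors obey the bound $\beta C$, or $c_v\in\Sigma_i$ but $|j(P)|_v\le 3500$, in which case the $c_v$-factor is even bounded by $\beta C/2\le\beta C$; in both situations multiplying gives $|g_i(P)|_v\le(\beta C)^{|\Sigma_i|}$. If $v\in S_i$, then by definition $c_v\in\Sigma_i$ and $|j(P)|_v>3500$, so the second case of Theorem~\ref{T:nouveau varphi}(\ref{T:nouveau varphi D}) applies to $c_v$ and bounds that factor by $|j(P)|_v^{-1/w_{c_v}}\beta C=|j(P)|_v^{-1/w_i}\beta C$ (using $w_{c_v}=w_i$ since $c_v\in\Sigma_i$), while each of the remaining $|\Sigma_i|-1$ factors is $\le\beta C$; multiplying gives $|g_i(P)|_v\le(\beta C)^{|\Sigma_i|}|j(P)|_v^{-1/w_i}$, exactly as claimed.

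There is no real obstacle here: the argument is pure bookkeeping over the three regimes of Theorem~\ref{T:nouveau varphi}(\ref{T:nouveau varphi D}), and the only arithmetic inequality, $4m^{24m}\le C$, collapses to $96.6\ge4$ after substituting $m\le N^3/24$. The one point demanding care is that the crude bound on $|\varphi(P)|_v$ must be obtained through the cusp $c_v$ that is actually nearby $P$ at the place $v$, since only for that cusp do we know $P\in\Omega_{c_v,v}$ and hence only there can Theorem~\ref{T:nouveau varphi}(\ref{T:nouveau varphi D}) be invoked.
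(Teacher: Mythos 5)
Your proposal is correct and follows essentially the same approach as the paper: bound each factor of $g_i(P)$ by $\beta C$ via the triangle inequality through the nearby cusp $c_v$ (using the unconditional archimedean case of Theorem~\ref{T:nouveau varphi}(\ref{T:nouveau varphi D}) together with the bound $|\varphi(c)|_v\le\beta m^{24m}$ from Theorem~\ref{T:nouveau varphi}(\ref{T:nouveau varphi C}) and the arithmetic check $4m^{24m}\le C$), and then sharpen the $c_v$-factor by the second archimedean case of (\ref{T:nouveau varphi D}) when $v\in S_i$. The only cosmetic difference is that you estimate $|\varphi(P)|_v$ first and then subtract, whereas the paper applies a three-term triangle inequality directly to $|\varphi(P)-\varphi(c)|_v$; these are equivalent, and your explicit case split on why $v\notin S_i$ still yields the crude bound is a helpful clarification rather than a deviation.
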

\begin{proof}
Recall we have chosen a place of $L$ extending $v$ that we also denoted by $v$.  We have also chosen a cusp $c_v\in \Sigma$ such that $P \in \Omega_{c_v,v}$.  Since $P$ is not a cusp,  Theorem~\ref{T:nouveau varphi}(\ref{T:nouveau varphi D}) implies that $|\varphi(P)-\varphi(c_v)|_v \leq  \beta C/2$.  Take any cusp $c\in \Sigma$.  By Theorem~\ref{T:nouveau varphi}(\ref{T:nouveau varphi C}), we have $|\varphi(c)|_v \leq \beta m^{24m}$.   By using the bound on $m$ from Lemma~\ref{L:trivial bounds}, one can show that $|\varphi(c)|_v \leq  \beta C/4$.

Take any $c\in \Sigma_i$.   We have $|\varphi(P)-\varphi(c)|_v \leq  |\varphi(P)-\varphi(c_v)|_v + |\varphi(c_v)|_v+|\varphi(c)|_v$ and hence $|\varphi(P)-\varphi(c)|_v \leq \beta C/2+\beta C/4+\beta C/4=\beta C$.  Therefore,
\[
|g_i(P)|_v =\prod_{c\in \Sigma_i}|\varphi(P)-\varphi(c)|_v \leq (\beta C)^{|\Sigma_i|}.
\]
Finally suppose that $v\in S_i$, i.e., $c_v\in \Sigma_i$ and $|j(P)|_v>3500$.  Note that $w_{c_v}=w_i$.  We get the other inequality of the lemma in the same manner except also using the bound $|\varphi(P)-\varphi(c_v)|_v \leq   |j(P)|_v^{-1/w_{i}} \cdot \beta C$ from Theorem~\ref{T:nouveau varphi}(\ref{T:nouveau varphi D}).
\end{proof}

We now bound the absolute value of $g_i(P)$ at finite places.

\begin{lemma} \label{L:bound on gi finite}
For any finite place $v$ of $K$, we have 
\[
|g_i(P)|_v \leq  
\begin{cases}
|j(P)|_v^{-1/w_i} & \text{ if $v\in S_i$}\\
1 & \text{ otherwise}.
\end{cases}
\]
\end{lemma}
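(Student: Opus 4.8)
The plan is to treat the finite places $v$ of $K$ according to whether or not $v\in S$, using the normalization fixed in the main proof that a finite place $v$ of $K$ lies in $S$ exactly when $|j(P)|_v>1$. As there, I regard $v$ as a place of $L=K(\zeta_N)$ (the value $|j(P)|_v$ being independent of the chosen extension since $j(P)\in K$), so that Theorem~\ref{T:nouveau varphi} applies and the cusp values $\varphi(c)\in\ZZ[\zeta_N]$ are $v$-integral.

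First I would dispose of the case $v\notin S$. Here $|j(P)|_v\le 1$, and also $v\notin S_i$ since $S_i\subseteq S$. By Lemma~\ref{L:properties of gi}(\ref{L:properties of gi iii}) we have $g_i(P)\in\OO_{K,S}$, hence $|g_i(P)|_v\le 1$, which is exactly the claimed bound. (Equivalently one argues directly: $\varphi$ is integral over $\ZZ[j]$ by Theorem~\ref{T:nouveau varphi}(\ref{T:nouveau varphi A}), so $\varphi(P)$ is $v$-integral because $j(P)$ is, and each $\varphi(c)\in\ZZ[\zeta_N]$ is $v$-integral, so every factor $\varphi(P)-\varphi(c)$ of $g_i(P)$ has $v$-absolute value $\le 1$.)

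Now suppose $v\in S$, so $|j(P)|_v>1$. In the main proof a cusp $c_v\in\Sigma$ with $P\in\Omega_{c_v,v}$ was fixed; since $P$ is not a cusp, $P\in\Omega_{c_v,v}-\{c_v\}$, so Theorem~\ref{T:nouveau varphi}(\ref{T:nouveau varphi D}) gives $|\varphi(P)-\varphi(c_v)|_v\le|j(P)|_v^{-1/w_{c_v}}<1$. For an arbitrary cusp $c\in\Sigma_i$ I write $\varphi(P)-\varphi(c)=\bigl(\varphi(P)-\varphi(c_v)\bigr)+\bigl(\varphi(c_v)-\varphi(c)\bigr)$: the first summand has $v$-absolute value $<1$ by the preceding bound, and the second has $v$-absolute value $\le 1$ since $\varphi(c_v),\varphi(c)\in\ZZ[\zeta_N]$ are $v$-integral; hence $|\varphi(P)-\varphi(c)|_v\le 1$ by the ultrametric inequality. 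Therefore $|g_i(P)|_v=\prod_{c\in\Sigma_i}|\varphi(P)-\varphi(c)|_v\le 1$, which is the claimed bound whenever $v\notin S_i$. If moreover $v\in S_i$, then $c_v\in\Sigma_i$, so $w_{c_v}=w_i$ (the widths being constant on the $\Gal_K$-orbit $\Sigma_i$), and the factor at $c=c_v$ satisfies $|\varphi(P)-\varphi(c_v)|_v\le|j(P)|_v^{-1/w_i}$ while the remaining $|\Sigma_i|-1$ factors are $\le 1$; multiplying yields $|g_i(P)|_v\le|j(P)|_v^{-1/w_i}$, as required.

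I do not anticipate a genuine obstacle, since all the analytic estimates are packaged inside Theorem~\ref{T:nouveau varphi}(\ref{T:nouveau varphi D}). The one point needing care is that $P$ need not lie in $\Omega_{c,v}$ for the cusps $c\in\Sigma_i$ other than $c_v$, so property (\ref{T:nouveau varphi D}) is unavailable for those factors; the resolution is that the ultrametric inequality together with the integrality of the cusp values $\varphi(c)$ already forces each such factor to be $v$-integral, so that only the single factor at $c_v$ contributes the saving power of $|j(P)|_v$.
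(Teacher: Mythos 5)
Your proposal is correct and follows essentially the same route as the paper: split on $v\in S$ or not, handle $v\notin S$ via $g_i(P)\in\OO_{K,S}$, and for $v\in S$ use Theorem~\ref{T:nouveau varphi}(\ref{T:nouveau varphi D}) at the nearby cusp $c_v$ together with the ultrametric inequality and the $v$-integrality of the cusp values $\varphi(c)$ to bound all factors of $g_i(P)$ by $1$, with the single factor at $c_v$ contributing the saving $|j(P)|_v^{-1/w_i}$ when $v\in S_i$. The only cosmetic difference is your optional direct argument for the $v\notin S$ case, which the paper delegates entirely to Lemma~\ref{L:properties of gi}(\ref{L:properties of gi iii}).
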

\begin{proof}
Take any finite place $v$ of $K$.  First suppose that $v\notin S$.  We have $g_i(P)\in \OO_{K,S}$ by Lemma~\ref{L:properties of gi}(\ref{L:properties of gi iii}) and hence $|g_i(P)|_v\leq 1$.  

We can now assume that $v\in S$ and hence $|j(P)|_v>1$.    Recall we have chosen a place of $L$ extending $v$ that we also denoted by $v$.  We have also chosen a cusp $c_v\in \Sigma$ such that $P \in \Omega_{c_v,v}$.  Since $P$ is not a cusp, Theorem~\ref{T:nouveau varphi}(\ref{T:nouveau varphi D}) implies that $|\varphi(P)-\varphi(c_v)|_v \leq |j(P)|_v^{-1/w_{c_v}}$.   In particular, $|\varphi(P)-\varphi(c_v)|_v \leq 1$.

Take any cusp $c\in \Sigma$.  Since $\varphi(c)$ and $\varphi(c_v)$ are integral by Theorem~\ref{T:nouveau varphi}(\ref{T:nouveau varphi C}), we find that
\[
|\varphi(P)-\varphi(c)|_v \leq \max\{ |\varphi(P)-\varphi(c_v)|_v, |\varphi(c_v)-\varphi(c)|_v\} \leq 1.
\]
Therefore, $|g_i(P)|=\prod_{c\in \Sigma_i}|\varphi(P)-\varphi(c)|_v \leq 1$.  Now assume that $v\in S_i$ and hence $c_v\in \Sigma_i$.  Using $w_{c_v}=w_i$, we have $|g_i(P)|=\prod_{c\in \Sigma_i}|\varphi(P)-\varphi(c)|_v \leq |\varphi(P)-\varphi(c_v)|_v \leq |j(P)|_v^{-1/w_i}$.
\end{proof}

Define $C':=22.16  N^{144m+7} 0.024^{24m}$.

\begin{lemma}  \label{L:partial height Si}
We have
\[
[K:\QQ]^{-1} \sum_{v\in S_i} d_v \log |j(P)|_v \leq w_i |\Sigma_i| \log(\beta C').
\]
\end{lemma}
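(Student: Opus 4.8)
The plan is to apply the product formula in $K$ to a single nonzero element of $K^{*}$ and to play its local sizes off against the negative powers of $|j(P)|_v$ furnished by Lemmas~\ref{L:bound on gi infinite} and~\ref{L:bound on gi finite}. The natural choice of element is $g_i(P)$; when it vanishes I would replace it by the auxiliary integer $\xi$ of Theorem~\ref{T:nouveau varphi}(\ref{T:nouveau varphi E}). A key organizational point is that all of $S_i$ must be treated in a \emph{single} application of the product formula: fragmenting $S_i$ and bounding the pieces separately would charge the enormous constant $\beta$ more than once and would lose the factor needed for the final polynomial bound.

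First suppose $g_i(P)\neq 0$, so $g_i(P)\in K^{*}$ by Lemma~\ref{L:properties of gi}. Lemmas~\ref{L:bound on gi finite} and~\ref{L:bound on gi infinite} say that for $v\in S_i$ we have $\log|g_i(P)|_v\leq -\tfrac1{w_i}\log|j(P)|_v + b_v$, where $b_v=0$ for $v$ finite and $b_v=|\Sigma_i|\log(\beta C)$ for $v$ infinite, while for $v\notin S_i$ we have $\log|g_i(P)|_v\leq 0$ for $v$ finite and $\log|g_i(P)|_v\leq |\Sigma_i|\log(\beta C)$ for $v$ infinite. Rearranging the bound on $S_i$, summing with weights $d_v$, and then using $\sum_{v\in M_K}d_v\log|g_i(P)|_v=0$ to rewrite $-\sum_{v\in S_i}d_v\log|g_i(P)|_v$ as $\sum_{v\notin S_i}d_v\log|g_i(P)|_v$, one gets
\[
\tfrac1{w_i}\sum_{v\in S_i}d_v\log|j(P)|_v \;\leq\; \sum_{v\notin S_i}d_v\log|g_i(P)|_v + \sum_{v\in S_i}d_v b_v \;\leq\; \Big(\sum_{v\in M_{K,\infty}}d_v\Big)|\Sigma_i|\log(\beta C),
\]
since the finite places off $S_i$ contribute $\leq 0$ and $b_v=0$ at finite places. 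As $\sum_{v\in M_{K,\infty}}d_v=[K:\QQ]$, this gives $[K:\QQ]^{-1}\sum_{v\in S_i}d_v\log|j(P)|_v\leq w_i|\Sigma_i|\log(\beta C)\leq w_i|\Sigma_i|\log(\beta C')$, the last step being the elementary estimate $C\leq C'$, which one checks directly from the explicit formulas for $C$ and $C'$ using the bounds on $m$ and $\mu$ in Lemma~\ref{L:trivial bounds}.

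Now suppose $g_i(P)=0$. Then $\varphi(P)=\varphi(c_0)$ for some $c_0\in\Sigma_i$; since $\varphi\in K(X_G)$ and $P\in Y_G(K)$ is not a cusp we have $\varphi(P)\in K$, and because $\Sigma_i$ is a single $\Gal_K$-orbit, applying each $\sigma\in\Gal_K$ to $\varphi(c_0)=\varphi(P)$ shows $\varphi$ is constant equal to $\varphi(P)$ on all of $\Sigma_i$. In particular $\varphi(P)=\varphi(c_v)$ for every $v\in S_i$, and since $P\in\Omega_{c_v,v}$ with $c_v\in\Sigma_i$, Theorem~\ref{T:nouveau varphi}(\ref{T:nouveau varphi E}) (applied to the orbit $\Sigma_i$, so $w=w_i$) supplies $\xi\in\OO_K\setminus\{0\}$ with $|\xi|_v\leq 1$ at all finite $v$, $|\xi|_v\leq(\beta C')^{w_i|\Sigma_i|}$ at all infinite $v$, and moreover $|\xi|_v\leq |j(P)|_v^{-w_i}$ for $v\in S_i$ finite and $|\xi|_v\leq |j(P)|_v^{-w_i}(\beta C')^{w_i|\Sigma_i|}$ for $v\in S_i$ infinite (note $|j(P)|_v>3500$ there). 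Feeding $\xi$ into the product formula and repeating the computation of the previous paragraph verbatim — now the exponent $-w_i$ is matched by the per-place constant $w_i|\Sigma_i|\log(\beta C')$ — yields $[K:\QQ]^{-1}\sum_{v\in S_i}d_v\log|j(P)|_v\leq w_i|\Sigma_i|\log(\beta C')$, which is the claim.

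The only real subtlety I anticipate is the bookkeeping: recognizing that one must not fragment $S_i$, and that the vanishing of $g_i(P)$ is precisely the situation Theorem~\ref{T:nouveau varphi}(\ref{T:nouveau varphi E}) was engineered for, the orbit argument above being what makes its hypotheses hold at every $v\in S_i$. Everything else reduces to the two displayed product-formula estimates together with the numerical comparison $C\leq C'$.
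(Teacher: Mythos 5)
Your proof is correct and follows essentially the same two-case strategy as the paper: a single application of the product formula to $g_i(P)$ when it is nonzero, and a fallback to the auxiliary integer $\xi$ from Theorem~\ref{T:nouveau varphi}(\ref{T:nouveau varphi E}) when $g_i(P)=0$, using transitivity of $\Gal_K$ on $\Sigma_i$ to conclude $\varphi(P)=\varphi(c')$ for all $c'\in\Sigma_i$. (One small nit: the paper's verification that $C\leq C'$ requires only $N\geq 3$, not the $m,\mu$ estimates of Lemma~\ref{L:trivial bounds}.)
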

\begin{proof}
We first assume that $g_i(P)\in K$ is nonzero.  We have $\prod_{v\in M_K} |g_i(P)|_v^{d_v}=1$ by the product formula.  Using the upper bounds on $|g_i(P)|_v$ from Lemmas~\ref{L:bound on gi infinite} and \ref{L:bound on gi finite}, we have
\[
1\leq \prod_{v\in M_{K,\infty}}(\beta C)^{d_v|\Sigma_i|}\cdot \prod_{v\in S_i} |j(P)|_v^{-d_v/w_i}.
\]
Taking logarithms and using that $\sum_{v\in M_{K,\infty}} d_v=[K:\QQ]$ gives 
\begin{align*}
[K:\QQ]^{-1}\sum_{v\in S_i} d_v \log |j(P)|_v &\leq w_i |\Sigma_i| \log(\beta C).
\end{align*}
To prove the lemma in this case, it thus suffices to show that $C\leq C'$.  We have
\[
C'/C=\tfrac{22.16}{96.6} N^{54m+3} (\tfrac{0.024}{0.1})^{24m} \geq \tfrac{22.16\cdot 3^3}{96.6}  (3^{54}\cdot \tfrac{0.024^{24}}{0.1^{24}})^m > 6 \cdot 10^{10m} >1,
\]
where we have used that $N\geq 3$.\\

We may now assume that $g_i(P)=0$.  We have $\varphi(P)=\varphi(c)$ for some $c\in \Sigma_i$.   Since $\varphi(c)=\varphi(P)$ is in $K$, $\varphi$ is defined over $K$ and $\Sigma_i$ has a transitive $\Gal_K$-action, we find that $\varphi(P)=\varphi(c')$ for all $c'\in \Sigma_i$.  With our fields $K_G\subseteq K \subseteq L$ and $\Sigma':=\Sigma_i$, let $\xi$ be a nonzero element of $\OO_K$ as in Theorem~\ref{T:nouveau varphi}(\ref{T:nouveau varphi E}). 

Take any place $v\in S_i$.  We have $P\in \Omega_{c_v,v}-\{c_v\}$ with $c_v\in \Sigma_i$.  Moreover, $|j(P)|_v>3500$ if $v$ is infinite.  By Theorem~\ref{T:nouveau varphi}(\ref{T:nouveau varphi E}), we have $|\xi|_v\leq |j(P)|_v^{-w_i}$ if $v$ is finite and  $|\xi|_v\leq |j(P)|_v^{-w_i} \cdot (\beta C')^{w_i|\Sigma_i|}$ if $v$ is infinite.    

For any infinite place $v\notin S_i$ of $K$, we have $|\xi|_v\leq  (\beta C')^{w_i|\Sigma_i|}$ by Theorem~\ref{T:nouveau varphi}(\ref{T:nouveau varphi E}).  Since $\xi$ is integral, we have $|\xi|_v\leq 1$ for all finite places $v\notin S_i$ of $K$.  The product formula and the above inequalities give
\[
1=\prod_{v\in M_K} |\xi|_v^{d_v} \leq \prod_{v\in S_i} |j(P)|_v^{-w_i d_v} \cdot \prod_{v\in M_{K,\infty}} (\beta C')^{w_i|\Sigma_i| d_v}.
\]
By taking logarithms and using $\sum_{v\in M_{K,\infty}} d_v=[K:\QQ]$, we obtain 
\[
[K:\QQ]^{-1}\sum_{v\in S_i} d_v \log |j(P)|_v \leq |\Sigma_i| \log(\beta C') \leq w_i |\Sigma_i| \log(\beta C'). \qedhere
\]
\end{proof}

Recall that a finite place $v$ of $K$ lies in $S$ if and only if $|j(P)|_v>1$.  A place $v\in S$ lies in $S_i$ for some $1\leq i \leq h$ if $v$ is finite or $|j(P)|_v>3500$.  Therefore,
\begin{align*}
h(j(P))&=[K:\QQ]^{-1} {\sum}_{v\in M_K} d_v \log \max\{1,|j(P)|_v\} \\
&\leq [K:\QQ]^{-1} \sum_{i=1}^h \sum_{v\in S_i} d_v \log |j(P)|_v + [K:\QQ]^{-1} \sum_{v\in M_{K,\infty}} d_v \log 3500.
\end{align*}
By Lemma~\ref{L:partial height Si} and $\sum_{v\in M_{K,\infty}} d_v =[K:\QQ]$, we obtain
\begin{align*}
h(j(P)) \leq \sum_{i=1}^h w_i |\Sigma_i|  \log(\beta C') + \log 3500.
\end{align*}
Observe that $\sum_{i=1}^h w_i |\Sigma_i| \leq \sum_{c\in \Sigma} w_c \leq \mu$, where the last inequality follows from (\ref{E:sum of widths}).  Therefore,
\begin{align}\label{E:better j bound}
h(j(P))&\leq \mu \log (\beta C')   + \log 3500.
\end{align}
This is our explicit upper bound for $h(j(P))$; note that the numbers $\beta$ and $C'$ are both expressed solely in terms of $\mu$, $m$ and $N$.   

We finish by making some estimates to produce worse, though more aesthetically pleasing, bounds for $h(j(P))$.  

\begin{lemma} \label{L:mu bound for j}
We have $h(j(P))\leq 4 (\mu+4)^4 \log N$.
\end{lemma}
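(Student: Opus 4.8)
The plan is to start from the explicit inequality \eqref{E:better j bound}, namely $h(j(P)) \leq \mu \log(\beta C') + \log 3500$, and bound each factor crudely in terms of $\mu$, $m$ and $N$, then finally eliminate $m$ using Lemma~\ref{L:trivial bounds}(\ref{L:trivial bounds ii}) and $\mu$ using Lemma~\ref{L:trivial bounds}(\ref{L:trivial bounds i}). The key quantity to control is $\log(\beta C')$. Recalling
\[
\beta = 2\,(2^3\, 4.5^{36m}\, N^{108m+15}\, m^{72m+1})^{m\mu+1}\, 4.5^{12m}\, N^{36m+4}
\quad\text{and}\quad
C' = 22.16\, N^{144m+7}\, 0.024^{24m},
\]
I would first take logarithms and collect the dominant contribution, which is the term $(m\mu+1)\log(2^3\,4.5^{36m}\,N^{108m+15}\,m^{72m+1})$ coming from $\beta$. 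Writing everything over a common "unit" of $\log N$ (using $N \geq 3$ to absorb numerical constants like $\log 4.5$, $\log 2$, etc.\ into small multiples of $\log N$, and $\log m \leq \log(\tfrac{1}{24}N^3) \leq 3\log N$ from Lemma~\ref{L:trivial bounds}(\ref{L:trivial bounds ii})), one gets a bound of the shape $\log(\beta C') \leq c_1 \, m\mu \cdot m \log N + (\text{lower order}) \leq c_2\, m^2 \mu \log N$ for an explicit modest constant $c_2$.

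Next I would feed in $m \leq \tfrac{1}{24}N^3$ and the bounds on $\mu$ from Lemma~\ref{L:trivial bounds}(\ref{L:trivial bounds i}), but more cleverly I expect the intended route keeps things in terms of $\mu$ and $m$ as long as possible and uses $m \leq g+1$ together with $\mu \geq 12(g+1) - 18$ type relations so that $m$ is itself $O(\mu)$; concretely, since $m\sum_{c\in\calC_G-\Sigma}w_c > g$ is achieved at the smallest such $m$ and $\sum_{c}w_c = \mu$, one has $m \leq g+1$ and $g < \mu/12 + 1$, so $m \leq \mu/12 + 2$. Substituting $m \leq \mu/12 + 2$ into the bound $\log(\beta C') \leq c_2 m^2 \mu \log N$ gives $\mu\log(\beta C') \leq c_3 \mu^2 (\mu/12+2)^2 \log N$, and then $h(j(P)) \leq c_3 \mu^2(\mu/12+2)^2\log N + \log 3500$. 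The final step is to check that this is at most $4(\mu+4)^4 \log N$: since $\mu^2(\mu/12+2)^2 \leq (\mu+4)^4$ trivially and one has lots of slack (the extra factor $4$, and $\log 3500$ being tiny compared to $(\mu+4)^4\log N$ once $\mu \geq 2$), this should come down to tracking that the accumulated numerical constant $c_3$ together with the $\log 3500$ term fits under the factor $4$.

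The main obstacle will be bookkeeping: $\beta$ is a tower of exponentials in $m$, so one must be careful that after taking logarithms the expression $(m\mu+1)(3\log 2 + 36m\log 4.5 + (108m+15)\log N + (72m+1)\log m)$ is honestly dominated by something like $c\, m^2\mu\log N$ — this requires using $N\geq 3$ to convert every stray additive constant and every $\log m$ into a bounded multiple of $\log N$, and keeping the multiplicative constant small enough that the final factor-$4$ cushion is not exceeded. A secondary subtlety is justifying $m \leq \mu/12+2$ rigorously: this uses $g \leq \mu/12 + 1$ (valid from the genus formula / \cite[Proposition~1.40]{MR1291394} since $X_G$ has at least one cusp, already invoked in the proof of Lemma~\ref{L:trivial bounds}) together with $m \leq g+1$. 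I would structure the proof as: (1) derive $m \leq \mu/12 + 2$; (2) bound $\log\beta$ and $\log C'$ separately by explicit multiples of $m^2\log N$ and $m\log N$ respectively, using $N\geq 3$; (3) combine into $\log(\beta C') \leq (\text{const})\, m^2\log N$; (4) multiply by $\mu$, substitute the bound on $m$, add $\log 3500$, and verify the result is $\leq 4(\mu+4)^4\log N$ by a direct comparison of polynomials in $\mu$.
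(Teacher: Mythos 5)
Your overall strategy matches the paper's: start from \eqref{E:better j bound}, bound $\log(\beta C')$ by a polynomial in $m,\mu$ times $\log N$, then eliminate $m$ via $m \leq g+1$ and a genus bound, and finish with a polynomial comparison. The paper does exactly this, ending with a polynomial $f(x)=(9x^4+222x^3+1536x^2+2132x)/4$ that is checked to satisfy $f(x)\leq 4(x+4)^4$. However, your proposed estimates are too lossy in two specific places, and the ``factor-4 cushion'' you invoke is much thinner than you assume: the paper's leading $\mu^4$ coefficient comes out to exactly $9/4$, so the slack factor is only about $16/9$, not several orders of magnitude.

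The first lossy step is absorbing $\log m$, $\log 4.5$, $\log 2$ into \emph{positive} multiples of $\log N$. The paper instead substitutes $m\leq\tfrac{1}{24}N^3$ directly into $m^{72m+1}$ to get $m^{72m+1}\leq (\tfrac{1}{24})^{72m+1}N^{216m+3}$ and then groups the factor $(2^3\,4.5^{36m}(\tfrac{1}{24})^{72m+1})^{m\mu+1}$ as $(\tfrac{1}{3})^{m\mu+1}(\tfrac{4.5}{24^2})^{36m(m\mu+1)}$, where $\tfrac{1}{3}$ and $\tfrac{4.5}{24^2}$ are both $<1$; these produce a large \emph{negative} contribution $(\approx -595)$ to $\log(\beta C')$, which is used to swallow $\log 3500$ entirely. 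If instead you bound, say, $(72m+1)\log m \leq (216m+3)\log N$ with no negative correction, the coefficient of $m$ inside the $(m\mu+1)(\cdots)$ term jumps from the paper's $324$ to roughly $370$--$400$, and your constant $c_2$ (equivalently, the coefficient of $m^2\mu^2\log N$ in $\mu\log(\beta C')$) lands above the threshold that makes the final polynomial comparison go through.

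The second lossy step is using $m\leq \mu/12+2$ from $g\leq\mu/12+1$ with one cusp. The paper uses $g\leq \mu/12$, valid because Runge's condition forces $X_G$ to have at least two cusps, giving $m\leq g+1\leq\mu/12+1$. The difference of $1$ matters: plugging $m\leq\mu/12+2$ symbolically into a bound of the form $c\,m^2\mu^2$ gives a polynomial whose minimum of $\frac{4(\mu+4)^4}{(\mu/12+2)^2\mu^2}$ over $\mu\geq 2$ is only around $200$--$280$, which is below even the paper's $c=324$. With $m\leq\mu/12+1$ the corresponding minimum is around $455$, and $324$ fits comfortably. So you genuinely need the two-cusp version of the genus bound, not the one-cusp version, to make the stated target $4(\mu+4)^4\log N$ work by this route.

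In short: right architecture, but your step (3) (``$\log(\beta C')\leq c_2 m^2\log N$'' with a ``modest'' $c_2$ obtained by pushing all stray constants into $\log N$) and your step (1) ($m\leq\mu/12+2$ from the one-cusp genus bound) are each loose enough that the final comparison in step (4) would fail. To repair, keep the negative contributions coming from $1/24$ and $0.024$ being less than $1$, and upgrade to $m\leq\mu/12+1$ using the two-cusp consequence of Runge's condition.
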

\begin{proof}
Using $m\leq \tfrac{1}{24} N^3$ from Lemma~\ref{L:trivial bounds}, we find that 
\begin{align*}
\beta&\leq 2(2^3 4.5^{36m} (\tfrac{1}{24})^{72m+1})^{m\mu+1} 4.5^{12m} \cdot N^{(324m + 18)(m\mu+1)+(36m+4)}\\
&= 2(\tfrac{1}{3})^{m\mu+1} (\tfrac{4.5}{24^2})^{36m(m\mu+1)}  4.5^{12m}  N^{(324m + 18)(m\mu+1)+(36m+4)}
\end{align*}
and hence 
\[
\beta C' \leq  44.32(\tfrac{1}{3})^{m\mu+1} (\tfrac{4.5}{24^2})^{36m(m\mu+1)} (4.5\cdot 0.024^2)^{12m} \cdot  N^d,
\]
where $d:=(324m + 18)(m\mu+1)+(36m+4) +(144m+7)$.  We have $\mu\geq 2$  (since otherwise $X_G$ has only one cusp and hence $1\leq |S|<\mathfrak{c}_{G,K} =1$).  Using that $m\geq 1$ and $m\mu+1 \geq 3$, we deduce that $\beta C' \leq 44.32(\tfrac{1}{3})^{3} (\tfrac{4.5}{24^2})^{108} (4.5\cdot 0.024^2)^{12} N^d$.  Taking logarithms, we find that 
$\log(\beta C')\leq d\log N -594.98$.  Since $\mu \geq 2$, we have
\[
\mu\log(\beta C')+ \log 3500 \leq \mu d\log N -2\cdot 594.98 +\log 3500 \leq \mu d \log N -1181.
\]
In particular, $\mu \log(\beta C') + \log 3500 \leq \mu d \log N$ and hence $h(j(P))\leq \mu d \log N$ by (\ref{E:better j bound}). 

By \cite[Proposition~1.40]{MR1291394}, we have $g+1\leq \mu/12+1$, where we have used that $X_G$ has at least two cusps due to the Runge condition on $X_G$.  Using $m\leq g+1\leq \mu/12+1$, we obtain an upper bound for $\mu d$ that is a polynomial in $\mu$.  In particular, $\mu d \leq f(\mu)$, where $f(x)=(9x^4 + 222x^3 + 1536x^2 + 2132x)/4$.   One can readily check that $f(x) \leq 4 (x+4)^4$ holds for all $x\geq 0$.  Therefore, $\mu d\leq 4 (\mu+4)^4$ and hence $h(j(P))\leq \mu d \log N \leq 4 (\mu+4)^4 \log N$.
\end{proof}

By Lemma~\ref{L:trivial bounds}, we have $\mu\leq N^3/2$ and hence $h(j(P))\leq 4(N^3/2+4)^4 \log N$ by Lemma~\ref{L:mu bound for j}. Using that $N\geq 3$, one can check that $4(N^3/2+4)^4 \leq N^{12}$ and hence $h(j(P))\leq N^{12} \log N$.

\section{Background: modular forms} \label{S:modular forms}

In this section, we recall some facts about modular forms.  For the basics on modular forms see \cite{MR1291394}.   

\subsection{Notation} \label{SS:modular form setup}

Recall that the group $\SL_2(\ZZ)$ acts by linear fractional transformations on the complex upper half-plane $\calH$ and the extended upper half-plane $\calH^*=\calH\cup \QQ \cup \{\infty\}$.       Consider an integer $k\geq 0$.  For a meromorphic function $f$ on $\calH$ and a matrix $\gamma=\left(\begin{smallmatrix}a & b \\c & d\end{smallmatrix}\right)\in \SL_2(\ZZ)$, define the meromorphic function $f|_k \gamma$ on $\calH$ by  $(f|_k \gamma)(\tau):= (c\tau+d)^{-k} f(\gamma \tau)$; we call this the \defi{slash operator} of weight $k$.

For a congruence subgroup $\Gamma$ of $\SL_2(\ZZ)$, we denote by $M_k(\Gamma)$ the set of \defi{modular forms} of weight $k$ on $\Gamma$; it is a finite dimensional complex vector space. Recall that each $f\in M_k(\Gamma)$ is a holomorphic function of the upper half-plane $\calH$ that satisfies $f|_k \gamma=f$ for all $\gamma\in \Gamma$ along with the familiar growth condition at the cusps.  For each modular form $f\in M_k(\Gamma)$, we have 
\[
f(\tau) = \sum_{n=0}^\infty a_n(f)\, q_w^{n}
\]
for unique $a_n(f)\in \CC$, where $w$ is the width of the cusp $\infty$ of $\Gamma$ and $q_w:=e^{2\pi i \tau/w}$.  We call this power series in $q_w$, the \defi{$q$-expansion} of $f$ (at the cusp $\infty$).   Note that $f$ is uniquely determined by its $q$-expansion and hence we can identify $f$ with its $q$-expansion.   For a subring $R$ of $\CC$, we denote by $M_k(\Gamma,R)$ the $R$-submodule of $M_k(\Gamma)$ consisting of modular forms whose $q$-expansion has coefficients in $R$.  

\subsection{Actions on $M_k(\Gamma(N))$} \label{SS:actions on Mk}

Fix positive integers $N$ and  $k$.  Since $\Gamma(N)$ is normal in $\SL_2(\ZZ)$, the slash operator of weight $k$ defines a right action of $\SL_2(\ZZ)$ on $M_k(\Gamma(N))$.   Take any modular form $f=\sum_{n=0}^\infty a_n(f) q_N^n$ in $M_k(\Gamma(N))$.  For every field automorphism $\sigma$ of $\CC$, there is a unique modular form $\sigma(f) \in  M_k(\Gamma(N))$ whose $q$-expansion is $\sum_{n=0}^\infty \sigma(a_n(f))\, q_N^{n}$.   This defines an action of $\Aut(\CC)$ on $M_k(\Gamma)$.   

The following lemma says that the above actions of $\SL_2(\ZZ)$ and $\Aut(\CC)$ induce a right action of $\GL_2(\ZZ/N\ZZ)$ on $M_{k}(\Gamma(N))$.  For each $d\in (\ZZ/N\ZZ)^\times$, let $\sigma_d$ be any automorphism of $\CC$ for which $\sigma_d(\zeta_N)=\zeta_N^d$.

\begin{lemma} \label{L:star action}
There is a unique right action $*$ of $\GL_2(\ZZ/N\ZZ)$ on $M_k(\Gamma(N),\QQ(\zeta_N))$ such that the following hold:
\begin{itemize}
\item
if $A \in \SL_2(\ZZ/N\ZZ)$, then  $f*A = f|_k \gamma$, where $\gamma$ is any matrix in $\SL_2(\ZZ)$ that is congruent to $A$ modulo $N$,
\item
if $A=\left(\begin{smallmatrix}1 & 0 \\0 & d\end{smallmatrix}\right)$, then $f*A=\sigma_d(f)$.
\end{itemize}
\end{lemma}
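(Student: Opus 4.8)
The plan is to build the action from its two prescribed pieces and then verify the group law, the key input being a commutation relation between the Galois twist $\sigma_d$ and the slash operator.

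First I would record the (unique) factorization in $\GL_2(\ZZ/N\ZZ)$: writing $\alpha_d:=\left(\begin{smallmatrix}1 & 0\\0 & d\end{smallmatrix}\right)$ for $d\in(\ZZ/N\ZZ)^\times$, every $A\in\GL_2(\ZZ/N\ZZ)$ equals $A_1\,\alpha_{\det A}$ with $A_1:=A\,\alpha_{\det A}^{-1}\in\SL_2(\ZZ/N\ZZ)$ uniquely determined by $A$. This forces the formula
\[
f*A\ :=\ \sigma_{\det A}\bigl(f\,|_k\,\gamma_A\bigr),
\]
where $\gamma_A\in\SL_2(\ZZ)$ is any lift of $A_1$. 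Granting that this is a right action, uniqueness (the final clause of the lemma) is then automatic: any right action satisfying the two bullet points must agree with this formula on $\SL_2(\ZZ/N\ZZ)$ and on each $\alpha_d$, hence on all of $\GL_2(\ZZ/N\ZZ)=\SL_2(\ZZ/N\ZZ)\cdot\{\alpha_d\}$.

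Second I would check the formula is well defined on $M_k(\Gamma(N),\QQ(\zeta_N))$ and recovers the two bullet points. Since $\Gamma(N)$ is normal in $\SL_2(\ZZ)$, for $\gamma\in\SL_2(\ZZ)$ the form $f|_k\gamma$ is again $\Gamma(N)$-invariant and depends only on $\gamma$ modulo $N$; moreover slashing by $\SL_2(\ZZ)$ keeps the $q$-expansion coefficients in $\QQ(\zeta_N)$ (the rationality statement that is the modular-form analogue of Lemma~\ref{L:Shimura basic}; cf.\ \cite{MR1291394}). Hence $f|_k\gamma_A$ has coefficients in $\QQ(\zeta_N)$, so $\sigma_{\det A}(f|_k\gamma_A)$ depends only on the restriction of $\sigma_{\det A}$ to $\QQ(\zeta_N)$, i.e.\ only on $\det A$; thus the ambiguity in the choice of the automorphism $\sigma_d$ of $\CC$ is harmless. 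Specialising, $A\in\SL_2(\ZZ/N\ZZ)$ gives $\det A=1$ and $f*A=f|_k\gamma$, while $A=\alpha_d$ gives $A_1=I$ and $f*A=\sigma_d(f)$; and $f*I=f$ is clear.

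The substance of the proof is the right-action identity $(f*A)*B=f*(AB)$, and I expect the genuine obstacle to be the commutation relation
\[
\sigma_d\bigl(f\,|_k\,\delta\bigr)\ =\ \sigma_d(f)\,|_k\,\bigl(\alpha_d^{-1}\delta\,\alpha_d\bigr)
\qquad\bigl(f\in M_k(\Gamma(N),\QQ(\zeta_N)),\ \delta\in\SL_2(\ZZ),\ d\in(\ZZ/N\ZZ)^\times\bigr),
\]
which is exactly the compatibility already encoded in the $\GL_2(\ZZ/N\ZZ)$-action on $\calF_N$ in Lemma~\ref{L:Shimura basic}, now for modular forms; this is where Shimura's arithmetic theory genuinely enters, since it is not a formal manipulation of the $q$-expansion at $\infty$ (it reflects the behaviour of $f$ at the other cusps), and I would either cite it from \cite{MR1291394} or deduce it from the function-field case. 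Granting it, the rest is routine: writing $A=A_1\alpha_a$, $B=B_1\alpha_b$ with $a=\det A$, $b=\det B$, one has $AB=\bigl(A_1\,(\alpha_a B_1\alpha_a^{-1})\bigr)\alpha_{ab}$ with $\alpha_a B_1\alpha_a^{-1}\in\SL_2(\ZZ/N\ZZ)$, so $(AB)_1=A_1\,(\alpha_a B_1\alpha_a^{-1})$ and $\det(AB)=ab$. Setting $h:=f|_k\gamma_{A_1}$ and letting $\delta\in\SL_2(\ZZ)$ lift $\alpha_a B_1\alpha_a^{-1}$, the commutation relation (with $d=a$) turns the factor $\sigma_a(h)\,|_k\,\gamma_{B_1}$ appearing in $(f*A)*B$ into $\sigma_a\bigl(h\,|_k\,\delta\bigr)$; applying the outer $\sigma_b$, using that $\sigma_b\circ\sigma_a$ restricts to $\QQ(\zeta_N)$ as an admissible $\sigma_{ab}$, and that all intermediate forms have $\QQ(\zeta_N)$-coefficients, both $(f*A)*B$ and $f*(AB)$ collapse to $\sigma_{ab}\bigl(f\,|_k\,(\gamma_{A_1}\delta)\bigr)$, since $\gamma_{A_1}\delta$ lifts $(AB)_1$. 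This completes the verification and hence the lemma.
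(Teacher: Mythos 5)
Your construction is correct, but the comparison is a bit lopsided: the paper's proof of this lemma is literally the single line ``See \cite[\S3]{BN2019},'' whereas you unpack the construction from scratch. Your factorization $A=A_1\alpha_{\det A}$ with $\alpha_d=\left(\begin{smallmatrix}1&0\\0&d\end{smallmatrix}\right)$ and the resulting formula $f*A=\sigma_{\det A}(f|_k\gamma_{A_1})$ is exactly the right normal form, and your reduction of both well-definedness and the cocycle check to two inputs --- (a) that slashing by $\SL_2(\ZZ)$ preserves $\QQ(\zeta_N)$-rationality of $q$-expansions, and (b) the commutation relation $\sigma_d(f|_k\delta)=\sigma_d(f)|_k(\alpha_d^{-1}\delta\alpha_d)$ --- correctly isolates where the arithmetic content lives. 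You are also right that (b) is not a formal manipulation of the expansion at $\infty$ but encodes the Galois action on all cusps, so it must be taken from Shimura's arithmetic theory or (as you sketch) transported from the weight-$0$ action on $\calF_N$ of Lemma~\ref{L:Shimura basic} by dividing by a level-one form of weight $k$ with rational $q$-expansion. In effect you have reconstructed the argument that \cite{BN2019} packages; the paper's choice buys brevity, while your version makes it transparent that the only genuine input beyond bookkeeping is Shimura's rationality/commutation theorem (or equivalently the Tate-curve/algebraic description of the $\GL_2(\Zhat)$-action), after which the group law is a mechanical verification exactly as you laid it out.

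One small thing worth saying explicitly in the write-up: when you apply $\sigma_b$ after $\sigma_a$ you use that each $\sigma_d$ is a field automorphism of $\CC$ only through its restriction to $\QQ(\zeta_N)$, and that $\Gal(\QQ(\zeta_N)/\QQ)$ is abelian, so $\sigma_b\circ\sigma_a$ and $\sigma_{ab}$ agree on all the coefficients that actually appear; you gesture at this (``restricts to $\QQ(\zeta_N)$ as an admissible $\sigma_{ab}$''), but since the $\sigma_d$ are only chosen up to extension to $\CC$, it is worth flagging that this is what makes the definition and the cocycle identity independent of those choices.
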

\begin{proof}
See \cite[\S3]{BN2019}.
\end{proof}

\subsection{The spaces $M_{k,G}$} \label{SS:MkG}

Fix a positive integer $N$ and let $G$ be a subgroup of $\GL_2(\ZZ/N\ZZ)$.   For each integer $k\geq 0$, define
\[
M_{k,G}:=M_k(\Gamma(N),\QQ(\zeta_N))^G,
\]
i.e., the subgroup fixed by the $G$-action $*$ from Lemma~\ref{L:star action}.   Note that $M_{k,G}$ is a vector space over $K_G=\QQ(\zeta_N)^{\det G}$.     The following lemma explains how we will use modular forms to produce regular functions on $X_G$.
 
 \begin{lemma} \label{L:quotient of modular forms}
Fix an integer $k \geq 0$ and take any modular forms $f$ and $g$ in $M_{k,G}$ with $g\neq 0$.  Then $f/g$ is an element of $K_G(X_G)=\calF_N^G$.
 \end{lemma}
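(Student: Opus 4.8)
The plan is to check directly that $\varphi:=f/g$ lies in $\calF_N$ and is fixed by the right action $*$ of $G$, and then to conclude via the identification $\calF_N^G=K_G(X_G)$ from \S\ref{SS:modular curve definition}.

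First I would record that $\varphi$ is a genuine rational function on $X(N)$. Since $g$ is a \emph{nonzero} element of $M_k(\Gamma(N))$ it is not the zero function on $\calH$, so $f/g$ is meromorphic on $\calH$; and because $f$ and $g$ have the same weight $k$, for every $\gamma\in\Gamma(N)$ the automorphy factors cancel and $\varphi(\gamma\tau)=\varphi(\tau)$, so $\varphi$ is $\Gamma(N)$-invariant. To see that $\varphi$ is moreover meromorphic at the cusps — hence defines an element of $\CC(\calX_{\Gamma(N)})$ with $q$-expansion over $\QQ(\zeta_N)$ — I would look at $q$-expansions. At the cusp $\infty$, if $g=\sum_{n\geq m} b_n q_N^n$ with $b_m\neq 0$, then $q_N^m/g$ is a unit of $\QQ(\zeta_N)[\![q_N]\!]$, so the $q$-expansion of $\varphi$ is $q_N^{-m}$ times a power series over $\QQ(\zeta_N)$, i.e. an element of $\QQ(\zeta_N)(\!(q_N)\!)$; applying the weight-$k$ slash operators at the remaining cusps is the same computation. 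This shows $\varphi\in\calF_N$.

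The substantive point is the compatibility of the two right actions denoted $*$: for all $h_1,h_2\in M_k(\Gamma(N),\QQ(\zeta_N))$ with $h_2\neq 0$ and all $A\in\GL_2(\ZZ/N\ZZ)$,
\[
(h_1/h_2)*A=(h_1*A)/(h_2*A),
\]
where on the left $*$ is the action on $\calF_N$ of Lemma~\ref{L:Shimura basic} and on the right it is the action on modular forms of Lemma~\ref{L:star action}. Since both actions preserve $M_k(\Gamma(N),\QQ(\zeta_N))$, are group actions, and every $A$ factors as $A'\cdot\left(\begin{smallmatrix}1&0\\0&\det A\end{smallmatrix}\right)$ with $A'\in\SL_2(\ZZ/N\ZZ)$, it suffices to verify the identity for $A\in\SL_2(\ZZ/N\ZZ)$ and for $A=\left(\begin{smallmatrix}1&0\\0&d\end{smallmatrix}\right)$. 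In the first case both sides are computed from a common lift $\gamma\in\SL_2(\ZZ)$ via the weight-$k$, respectively weight-$0$, slash operator, and the factor $(c\tau+d)^{-k}$ cancels in the quotient. In the second case both actions amount to applying $\sigma_d$ to the $q$-expansion coefficients; since $\sigma_d$ extends to a ring automorphism of $\QQ(\zeta_N)(\!(q_N)\!)$ acting coefficientwise, and the $q$-expansion of a quotient is the quotient of the $q$-expansions in this ring, the identity follows.

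Granting the displayed identity, I would apply it with $h_1=f$ and $h_2=g$: since $f,g\in M_{k,G}$ we have $f*A=f$ and $g*A=g$ for all $A\in G$, whence $\varphi*A=\varphi$ for all $A\in G$, so $\varphi\in\calF_N^G=K_G(X_G)$. The only delicate part of the argument is the compatibility identity, and even there the content is light; the points to be careful about are that Lemma~\ref{L:star action} really puts the modular-form action on the $\QQ(\zeta_N)$-rational forms (so division stays inside $\calF_N$) and that $*$ is a group action (so that the reduction to the two generating classes of matrices is legitimate).
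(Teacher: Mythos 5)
Your proof is correct and follows essentially the same route as the paper: show that $f/g$ is $\Gamma(N)$-invariant with $q$-expansion coefficients in $\QQ(\zeta_N)$, so lies in $\calF_N$, and then use the compatibility $(f/g)*A=(f*A)/(g*A)$ together with $G$-invariance of $f$ and $g$ to conclude $f/g\in\calF_N^G=K_G(X_G)$. The paper asserts that compatibility of the two $*$-actions without further comment; you are right to spell out the verification by reducing to the generating cases $A\in\SL_2(\ZZ/N\ZZ)$ and $A=\left(\begin{smallmatrix}1&0\\0&d\end{smallmatrix}\right)$, which is the one point worth making explicit.
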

 \begin{proof}
For any $\gamma\in \Gamma(N)$, we have $(f/g)(\gamma \tau) = f(\gamma \tau)/g(\gamma\tau) = (f|_k\gamma)(\tau)/(g|_k \gamma)(\tau)=f(\tau)/g(\tau)$.  Therefore, $f/g$ is a modular function of level $N$.   We have $f/g \in \calF_N$ since the $q$-expansions of $f$ and $g$ both have coefficients in $\QQ(\zeta_N)$.

For any $A\in G$, we have $(f/g)*A = (f*A)/(g*A)=f/g$, where the first $*$-action is the one from Lemma~\ref{L:Shimura basic}.   Therefore, $f/g$ is an element of $\calF_N^G=K_G(X_G)$.
\end{proof}

Take any cusp $c \in \calC_G$ of $X_G$.  Choose an $A\in \SL_2(\ZZ/N\ZZ)$ for which $A\cdot c_\infty=c$, where $c_\infty$ is the cusp at infinity.  Set $w=w_c$.  For any $f \in M_{k,G}$, the $q$-expansion of $f*A$ lies in $\QQ(\zeta_N)[\![q_{w}]\!]$.    When $f$ is nonzero, we define $\nu_c(f)$ to be the minimal integer $n$ for which the coefficient of $q_w^n$ in the $q$-expansion of $f*A$ is nonzero.   When $f$ is zero, we define $\nu_c(f)=+\infty$.  Note that $\nu_c(f)$ does not depend on the choice of $A$.   Now consider any $f,g\in M_{k,G}$ with $g$ nonzero.   We have $f/g \in K_G(X_G) \subseteq \QQ(\zeta_N)(X_G)$ by Lemma~\ref{L:quotient of modular forms}.  Moreover,
\begin{align} \label{E:ordc}
\ord_c(f/g)=\nu_c(f)-\nu_c(g).
\end{align}
   
 \subsection{Eisenstein series of weight 1} \label{SS:weight 1}

See \cite[\S3]{MR2104361} for the basics on Eisenstein series.  For further information, we refer to \S\S2--3 of \cite{BN2019} where all the results below are summarized and referenced (except for the explicit constant $c_0$ in Lemma~\ref{L:Eisenstein expansion}, see \cite[Lemma~3.1]{MR3705252} instead).   We will restrict our attention to weight $1$ modular forms; our functions $E_\alpha$ are denoted $E_\alpha^{(1)}$ in \cite{BN2019}.

Fix a positive integer $N$.  Consider any pair $\alpha\in (\ZZ/N\ZZ)^2$ and choose $a,b\in \ZZ$ with $\alpha\equiv (a,b) \pmod{N}$.  With $\tau\in \calH$, consider the series
\begin{align} \label{E:Ealpha def}
E_\alpha(\tau,s) = \frac{1}{-2\pi i} 
\sum_{\substack{\omega\in \ZZ + \ZZ\tau\\\omega \neq - (a\tau+b)/N}}  \Big(\frac{a\tau+b}{N} + \omega\Big)^{-1}
\cdot \Big| \frac{a\tau+b}{N} + \omega\Big|^{-2s}.
\end{align}
The series (\ref{E:Ealpha def}) converges when the real part of $s\in \CC$ is large enough.   Hecke proved that $E_\alpha(\tau,s)$ can be analytically continued to all $s\in \CC$.    Using this analytic continuation, we define the \defi{Eisenstein series} 
\[
E_{\alpha}(\tau) :=E_\alpha(\tau,0).
\]
For $\gamma\in \SL_2(\ZZ)$, we have $E_{\alpha} |_1 \gamma = E_{\alpha \gamma}$, where $\alpha \gamma \in (\ZZ/N\ZZ)^2$ denotes matrix multiplication.  In particular, $E_\alpha$ is fixed by $\Gamma(N)$ under the slash operator of weight $1$.

\begin{lemma} \label{L:Eisenstein expansion}
Take any $a,b\in \ZZ$ and let $\alpha \in (\ZZ/N\ZZ)^2$ be the image of $(a,b)$ modulo $N$.
The function $E_\alpha$ is a modular form of weight $1$ on $\Gamma(N)$ with $q$-expansion
\[
c_0 + \sum_{\substack{m,n \geq 1\\ m\equiv a \bmod{N}}} \zeta_N^{bn} q_N^{mn} -  \sum_{\substack{m,n \geq 1\\ m\equiv -a \bmod{N}}}  \zeta_N^{-bn} q_N^{mn}
\]
and $c_0 \in \QQ(\zeta_N)$.  Moreover,
\[
c_0= \begin{cases}
     0 & \text{if $a\equiv b \equiv 0\pmod{N}$}, \\
     \frac{1}{2} \, \frac{1+\zeta_N^b}{1-\zeta_N^b} & \text{if $a\equiv 0 \pmod{N}$ and $b\not\equiv 0 \pmod{N}$},\\
     \frac{1}{2} -  \frac{a_0}{N} & \text{if $a\not\equiv 0 \pmod{N}$,}     
\end{cases}
\]
where $0\leq a_0<N$ is the integer congruent to $a$ modulo $N$.
\end{lemma}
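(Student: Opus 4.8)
The plan is to compute the $q$-expansion of $E_\alpha$ directly from the defining series (\ref{E:Ealpha def}) together with Hecke's analytic continuation, by the classical Fourier-expansion argument for real-analytic Eisenstein series. The relations $E_\alpha|_1\gamma=E_{\alpha\gamma}$ and the holomorphy of $E_\alpha$ on $\calH$ (Hecke's continuation produces no non-holomorphic correction term in weight $1$) are already recorded before the statement, so the content is really the $q$-expansion and the three formulas for $c_0$; once the expansion is shown to have no negative-index terms, membership in $M_1(\Gamma(N))$ follows.

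\emph{Reindexing.} Writing a lattice point as $\omega=m\tau+n$ with $m,n\in\ZZ$, the term $\tfrac{a\tau+b}{N}+\omega$ equals $\tfrac1N(A\tau+B)$ with $A:=a+mN$ and $B:=b+nN$; as $(m,n)$ runs over $\ZZ^2$ with $\omega\neq-(a\tau+b)/N$, the pair $(A,B)$ runs over all integer pairs with $A\equiv a$, $B\equiv b\bmod N$ and $(A,B)\neq(0,0)$ (using $\tau\notin\RR$). Thus
\[
-2\pi i\,E_\alpha(\tau,s)=N^{1+2s}\!\!\sum_{\substack{A\equiv a,\ B\equiv b\bmod N\\ (A,B)\neq(0,0)}}\!\!(A\tau+B)^{-1}\,|A\tau+B|^{-2s},
\]
the sum converging for $\operatorname{Re}(s)$ large and continued to $s=0$ by Hecke. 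I would then separate the contribution of the terms with $A=0$ (present only when $a\equiv0\bmod N$) from those with $A\neq0$.

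\emph{The terms with $A\neq 0$.} Fixing such an $A$ and summing over $B=b+nN$, the inner sum equals $N^{-1-2s}\sum_{n\in\ZZ}(z_A+n)^{-1}|z_A+n|^{-2s}$ with $z_A=(A\tau+b)/N$. Since $\operatorname{Im}(z_A)$ has the sign of $A$, I would apply the Hecke-regularized Lipschitz formula
\[
\sum_{n\in\ZZ}\frac{1}{(z+n)\,|z+n|^{2s}}\Big|_{s=0}=\begin{cases}-\pi i-2\pi i\sum_{r\geq1}e^{2\pi i rz}, & \operatorname{Im}(z)>0,\\ \pi i+2\pi i\sum_{r\geq1}e^{-2\pi i rz}, & \operatorname{Im}(z)<0.\end{cases}
\]
Multiplying by $\tfrac{1}{-2\pi i}N^{1+2s}\cdot N^{-1-2s}$ and summing over $A\equiv a\bmod N$ with $A\neq0$ — splitting $A>0$ (set $m=A$, $n=r$) and $A<0$ (set $m=-A$) — the exponential parts collapse precisely into $\sum_{m,n\geq1,\,m\equiv a}\zeta_N^{bn}q_N^{mn}-\sum_{m,n\geq1,\,m\equiv -a}\zeta_N^{-bn}q_N^{mn}$, which is the non-constant part of the claimed expansion.

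\emph{The constant term.} This is the only delicate point: the ``$\mp\pi i$'' pieces of the Lipschitz formula give one summand per $A$, and $\sum_A$ of these diverges at $s=0$, so the regulator $s$ cannot be set before summing. Keeping $s$, the relevant sum is $\sum_{m>0,\,m\equiv a_0\bmod N}m^{-2s}=N^{-2s}\zeta(2s,a_0/N)$, a Hurwitz zeta function; using $\zeta(0,x)=\tfrac12-x$, the $A\neq0$ terms contribute $\tfrac12\zeta(0,a_0/N)-\tfrac12\zeta(0,(N-a_0)/N)=\tfrac12-\tfrac{a_0}{N}$ when $a\not\equiv0\bmod N$, and $0$ when $a\equiv0\bmod N$. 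When $a\equiv0$ one adds the $A=0$ contribution $\tfrac{1}{-2\pi i}N^{1+2s}\sum_{B\equiv b\bmod N,\,B\neq0}B^{-1}|B|^{-2s}$, which is $0$ if $b\equiv0$ (odd summation) and, if $b\not\equiv0$, equals $\tfrac{\pi}{-2\pi i}\cot(\pi b_0/N)=\tfrac12\,\tfrac{1+\zeta_N^b}{1-\zeta_N^b}$ via $(1+e^{i\theta})/(1-e^{i\theta})=i\cot(\theta/2)$ and the $\pi$-periodicity of $\cot$. Combining the two pieces yields the three cases for $c_0$; each closed form manifestly lies in $\QQ(\zeta_N)$ and the expansion has no negative terms, so $E_\alpha\in M_1(\Gamma(N))$. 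I expect the main obstacle to be the careful justification of this Hurwitz-zeta regularization — that is, interchanging $s\to0$ with the sum over $A$ using the full analytic form of the Lipschitz formula rather than only its value at $s=0$; everything else is bookkeeping with geometric and zeta series.
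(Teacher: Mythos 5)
The paper itself does not prove this lemma: the preamble to \S\ref{SS:weight 1} simply cites \S\S2--3 of Brunault--Neururer \cite{BN2019} for the $q$-expansion and Brunault \cite[Lemma~3.1]{MR3705252} specifically for the explicit constant $c_0$. Your proposal supplies a self-contained derivation in place of those citations, and in outline it is the correct (and essentially the classical Hecke-style) argument. Your reindexing $\omega = m\tau+n \mapsto (A,B)$ with $A\equiv a$, $B\equiv b \bmod N$ is right, the $A=0$ contribution does reduce to $\pi\cot(\pi b_0/N)/(-2\pi i)=\tfrac12\tfrac{1+\zeta_N^b}{1-\zeta_N^b}$ by the partial-fraction series for cotangent (or vanishes if $b\equiv 0$), the exponential parts of the Lipschitz formula do assemble term-by-term into the two double sums over $m,n\geq 1$ with $m\equiv \pm a$, and your Hurwitz-zeta accounting $\tfrac12\zeta(0,a_0/N)-\tfrac12\zeta(0,(N-a_0)/N)=\tfrac12-a_0/N$ recovers the remaining piece of $c_0$ (and gives $0$ when $a\equiv0$). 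The one genuine gap is exactly the one you flag yourself: to sum the $\mp\pi i$ pieces over $A$ you must keep the $s$-dependent version of the Lipschitz formula (say with a factor asymptotic to $|\mathrm{Im}\,z_A|^{-2s}$, which supplies the $|A|^{-2s}$ decay) and justify interchanging the $A$-sum with $s\to 0$; without writing out that uniform form and a dominated-convergence argument, the step where $\sum_A(\mp\pi i)$ ``becomes'' a Hurwitz zeta is only heuristic. That gap is filled in standard references (Hecke, Weil, or the cited Brunault), so what you have is correct in spirit and more informative than the paper's citation-only treatment, but it is not yet a complete proof until that regularization step is made precise.
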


The right action $*$ of $\GL_2(\ZZ/N\ZZ)$ on $M_1(\Gamma(N),\QQ(\zeta_N))$, described in \S\ref{SS:actions on Mk}, acts on the Eisenstein series $E_\alpha$ in a pleasant manner.

\begin{lemma} \label{L:essential EaA}
We have $E_\alpha * A = E_{\alpha A}$ for all $\alpha\in (\ZZ/N\ZZ)^2$ and $A\in \GL_2(\ZZ/N\ZZ)$.
\end{lemma}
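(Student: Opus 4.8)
The plan is to reduce to a generating set of $\GL_2(\ZZ/N\ZZ)$ and then exploit that $*$ is a \emph{right} action. First I would write $D_d:=\left(\begin{smallmatrix}1 & 0\\0 & d\end{smallmatrix}\right)$ for $d\in(\ZZ/N\ZZ)^\times$ and observe that every $A\in\GL_2(\ZZ/N\ZZ)$ factors as $A=B\,D_{\det A}$ with $B:=A\,D_{\det A}^{-1}\in\SL_2(\ZZ/N\ZZ)$; thus $\SL_2(\ZZ/N\ZZ)$ together with the matrices $D_d$ generate $\GL_2(\ZZ/N\ZZ)$. It therefore suffices to prove $E_\alpha*A=E_{\alpha A}$ in the two cases $A\in\SL_2(\ZZ/N\ZZ)$ and $A=D_d$: granting these, for general $A$ one gets $E_\alpha*A=(E_\alpha*B)*D_{\det A}=E_{\alpha B}*D_{\det A}=E_{(\alpha B)D_{\det A}}=E_{\alpha A}$, using that $*$ is a right action and that matrix multiplication is associative.

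For $A\in\SL_2(\ZZ/N\ZZ)$ I would just combine two facts already in the text. By the description of $*$ in Lemma~\ref{L:star action}, $E_\alpha*A=E_\alpha|_1\gamma$ for any $\gamma\in\SL_2(\ZZ)$ congruent to $A$ modulo $N$, and by the transformation law recorded in \S\ref{SS:weight 1} one has $E_\alpha|_1\gamma=E_{\alpha\gamma}$. Since $\alpha\gamma\in(\ZZ/N\ZZ)^2$ depends only on $\gamma$ modulo $N$, hence only on $A$, this gives $E_\alpha*A=E_{\alpha A}$.

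For $A=D_d$, write $\alpha\equiv(a,b)\pmod N$, so that $\alpha A\equiv(a,db)\pmod N$. Here $E_\alpha*D_d=\sigma_d(E_\alpha)$ by Lemma~\ref{L:star action}, and by \S\ref{SS:actions on Mk} this is the modular form obtained by applying $\sigma_d$ (so $\zeta_N\mapsto\zeta_N^d$) to each coefficient of the $q$-expansion of $E_\alpha$; this makes sense because those coefficients lie in $\QQ(\zeta_N)$ by Lemma~\ref{L:Eisenstein expansion}. Looking at the explicit $q$-expansion in Lemma~\ref{L:Eisenstein expansion}, applying $\sigma_d$ replaces $\zeta_N^{bn}$ by $\zeta_N^{dbn}$ and $\zeta_N^{-bn}$ by $\zeta_N^{-dbn}$ in the two sums, which turns them into exactly the sums occurring in the $q$-expansion of $E_{(a,db)}$. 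So the whole identity reduces to checking that $\sigma_d$ sends the constant term $c_0$ of $E_{(a,b)}$ to the constant term $c_0'$ of $E_{(a,db)}$, and this is the one spot that needs a brief calculation: a three-case check against the formula for $c_0$ in Lemma~\ref{L:Eisenstein expansion} — both coordinates zero (then $db\equiv 0$ and $c_0=c_0'=0$); first coordinate zero and second nonzero (then $db\not\equiv 0$ since $d$ is a unit, and $\sigma_d\big(\tfrac{1}{2}\,\tfrac{1+\zeta_N^{b}}{1-\zeta_N^{b}}\big)=\tfrac{1}{2}\,\tfrac{1+\zeta_N^{db}}{1-\zeta_N^{db}}=c_0'$); and first coordinate nonzero (then $c_0=\tfrac{1}{2}-a_0/N\in\QQ$ is fixed by $\sigma_d$, and equals $c_0'$ since $\alpha A$ has the same first coordinate). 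This yields $\sigma_d(E_\alpha)=E_{(a,db)}=E_{\alpha A}$.

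I do not anticipate any real obstacle: the lemma is essentially bookkeeping once one has Lemmas~\ref{L:star action} and \ref{L:Eisenstein expansion} and the $\SL_2(\ZZ)$-transformation law. The only things to watch are keeping the right-action conventions and the direction of the matrix multiplication $\alpha\mapsto\alpha A$ consistent in the reduction to generators, and the (entirely routine) three-case verification of the constant term for $A=D_d$.
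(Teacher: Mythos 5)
The paper itself gives no proof of this lemma; it only points the reader to \cite[\S\S2--3]{BN2019}, where the relevant facts about Eisenstein series and the $*$-action are assembled. Your proposal therefore cannot be compared against a proof the paper actually writes out, but it is a complete and correct argument, and it follows exactly the route one would expect: factor $A=B\,D_{\det A}$ with $B\in\SL_2(\ZZ/N\ZZ)$, use $E_\alpha|_1\gamma=E_{\alpha\gamma}$ (stated just above the lemma) together with Lemma~\ref{L:star action} for the $\SL_2$ part, and use the explicit $q$-expansion from Lemma~\ref{L:Eisenstein expansion} together with $f*D_d=\sigma_d(f)$ for the diagonal part. Your three-case check of the constant term under $\sigma_d$ is exactly the point where one could slip, and you handle it correctly: the case $a\equiv 0$, $b\not\equiv 0$ really does need $d\in(\ZZ/N\ZZ)^\times$ to ensure $db\not\equiv 0$, and the case $a\not\equiv 0$ uses that $c_0$ is rational. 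No gaps.
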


\subsection{Expressing modular forms in terms of Eisenstein series} \label{SS:KM}

Using the Eisenstein series of weight $1$ from \S\ref{SS:weight 1}, we can generate higher weight modular forms.  

\begin{thm}[Khuri-Makdisi] \label{T:Eisenstein span}
Suppose $N>2$.   The $\CC$-subalgebra of $\bigoplus_{k\geq 0} M_k(\Gamma(N))$ generated by the Eisenstein series $E_\alpha$ with $\alpha\in (\ZZ/N\ZZ)^2$  contains all modular forms of weight $k$ on $\Gamma(N)$ for all $k\geq 2$. 
\end{thm}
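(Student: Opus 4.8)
The statement is Khuri-Makdisi's theorem, so the plan is to quote it; for completeness I sketch how I would organize the argument, and where the real difficulty sits. Write $R=\bigoplus_{k\ge 0}R_k$ for the $\CC$-subalgebra of $\bigoplus_{k\ge 0}M_k(\Gamma(N))$ generated by the Eisenstein series $E_\alpha$, $\alpha\in(\ZZ/N\ZZ)^2$. Since all the generators have weight $1$, the degree-$k$ piece $R_k$ is just the $\CC$-span of the products $E_{\alpha_1}\cdots E_{\alpha_k}$, and the goal is to show $R_k=M_k(\Gamma(N))$ for every $k\ge 2$. My first move is to reduce this to the two cases $k=2$ and $k=3$.

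For the reduction I would use that $N>2$ forces $\Gamma(N)$ to have no elliptic points and only regular cusps, so that $M_k(\Gamma(N))$ is identified with $H^0(X(N),\underline\omega^{\otimes k})$ for the line bundle $\underline\omega$ of weight-one forms on $X(N)$, with $\underline\omega^{\otimes 2}\cong\Omega^1_{X(N)}(\mathrm{cusps})$ and a short computation giving $\deg\underline\omega^{\otimes 2}=\mu_{X(N)}/6$, where $\mu_{X(N)}=|\SL_2(\ZZ/N\ZZ)|/2$. Since $X(N)$ has at least four cusps when $N>2$, one checks that $\deg\underline\omega^{\otimes 2}\ge 2g+1$ and that $\deg\underline\omega^{\otimes m}\ge 2g$ for all $m\ge 3$, where $g$ is the genus of $X(N)$. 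By the classical normal-generation results for a line bundle of degree $\ge 2g+1$ on a curve (Mumford, Castelnuovo), the multiplication maps
\[
M_2(\Gamma(N))\otimes M_{k-2}(\Gamma(N))\longrightarrow M_k(\Gamma(N))
\]
are then surjective for all $k\ge 4$. Hence, once $R_2=M_2(\Gamma(N))$ and $R_3=M_3(\Gamma(N))$ are known, an easy induction gives $R_k=M_k(\Gamma(N))$ for every $k\ge 2$: the even weights come from weight $2$, and the odd weights $\ge 5$ from weight $3$.

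It remains to settle the two base cases, and this is where essentially all the content lies. Using Lemma~\ref{L:Eisenstein expansion} together with $E_\alpha*A=E_{\alpha A}$ from Lemma~\ref{L:essential EaA}, one can write down the $q$-expansion of each product $E_{\alpha_1}\cdots E_{\alpha_k}$ at every cusp of $X(N)$; comparing constant terms at the cusps shows with little effort that the Eisenstein subspace of $M_k(\Gamma(N))$ is contained in $R_k$ for $k=2,3$. The genuinely hard point is that these products also span the cusp-form spaces $S_2(\Gamma(N))$ and $S_3(\Gamma(N))$; this is exactly Khuri-Makdisi's theorem (and, in weight $2$, also a theorem of Borisov--Gunnells), and its proof goes through the moduli interpretation of the $E_\alpha$ as (suitably normalized) Eisenstein sections attached to the $N$-torsion of the universal generalized elliptic curve over $X(N)$, after which one argues on the universal curve, where the algebra of level-$N$ functions with controlled poles along the zero section is generated by division values of the Weierstrass $\wp$-function and of $\wp'$. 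That moduli-theoretic input is the main obstacle, and I would simply use it as a black box. (Weight $1$ is deliberately excluded: $R_1$ is only the span of the $E_\alpha$, which is strictly smaller than $M_1(\Gamma(N))$ whenever $S_1(\Gamma(N))\ne 0$.)
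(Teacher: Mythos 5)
Your proposal is correct and matches the paper's approach, which is simply to cite the result: the paper's entire proof is the one-line statement that this formulation of Khuri-Makdisi's theorem \cite{MR2904927} is Theorem~3.1 of \cite{BN2019}. Your sketch of the underlying argument (reduction to weights $2$ and $3$ via Castelnuovo--Mumford normal generation of the square of the weight-one line bundle on $X(N)$, using that $X(N)$ has at least four cusps when $N>2$, followed by the moduli-theoretic input for the base cases) is a reasonable outline of how the proof goes, but the paper does not reproduce any of it.
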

\begin{proof}
 This particular formulation of results of Khuri-Makdisi \cite{MR2904927} is Theorem~3.1 of \cite{BN2019}.
\end{proof}

The following is a direct consequence of the above theorem.  It describes an explicit finite set of generators for $M_{k,G}$ as a vector space over $\QQ$. 

\begin{lemma} \label{L:KM reinterpreted}
Fix integers $N>2$ and $k\geq 2$.  Let $G$ be a subgroup of $\GL_2(\ZZ/N\ZZ)$.   Then $M_{k,G}$, as a $\QQ$-vector space, is spanned by the set of modular forms of the form 
\begin{align} \label{E:is a trace}
\sum_{A\in G} \zeta_N^{j \det A}\, E_{\alpha_1 A} \cdots E_{\alpha_k A}
\end{align}
with $\alpha_i \in (\ZZ/N\ZZ)^2-\{0\}$ and $0\leq j < [\QQ(\zeta_N):\QQ]$.
\end{lemma}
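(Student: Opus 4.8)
The plan is to deduce Lemma~\ref{L:KM reinterpreted} from Theorem~\ref{T:Eisenstein span} by combining two operations: first, the Khuri-Makdisi generation result expresses every modular form in $M_k(\Gamma(N))$ (for $k \geq 2$) as a $\CC$-linear combination of monomials $E_{\alpha_1}\cdots E_{\alpha_k}$ in the weight-$1$ Eisenstein series; second, applying a suitable ``symmetrization'' or ``trace'' operator onto the $G$-fixed subspace will turn such monomials into the explicit generators in (\ref{E:is a trace}). The main subtlety is bookkeeping: getting from a spanning set over $\CC$ of $M_k(\Gamma(N))$ to a spanning set over $\QQ$ of the $G$-fixed, $\QQ(\zeta_N)$-rational subspace $M_{k,G}$.

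First I would observe that by Theorem~\ref{T:Eisenstein span}, the monomials $E_{\alpha_1}\cdots E_{\alpha_k}$ with $\alpha_i \in (\ZZ/N\ZZ)^2$ span $M_k(\Gamma(N))$ over $\CC$; one can discard any monomial with some $\alpha_i = 0$, since $E_0$ is a constant (indeed $E_{(0,0)} = 0$ by Lemma~\ref{L:Eisenstein expansion}), so in fact the constant term issue makes those monomials vanish and we may assume $\alpha_i \neq 0$ for all $i$. Each such monomial has $q$-expansion with coefficients in $\QQ(\zeta_N)$ by Lemma~\ref{L:Eisenstein expansion}, so they actually span $M_k(\Gamma(N),\QQ(\zeta_N))$ as a $\QQ(\zeta_N)$-vector space — here I would use that a $\QQ(\zeta_N)$-rational modular form which is a $\CC$-combination of $\QQ(\zeta_N)$-rational forms is already a $\QQ(\zeta_N)$-combination of them (linear algebra over the field extension $\CC/\QQ(\zeta_N)$ applied to $q$-expansion coefficients, since a modular form is determined by its $q$-expansion).

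Next I would introduce the averaging operator. For $f \in M_k(\Gamma(N),\QQ(\zeta_N))$ define $\operatorname{Tr}_G(f) := \sum_{A \in G} f * A$; since the $*$-action of Lemma~\ref{L:star action} is by $\QQ(\zeta_N)$-semilinear automorphisms (linear on the $\SL_2$ part, and $\sigma_d$-semilinear on the diagonal part), the image $\operatorname{Tr}_G(f)$ lands in $M_{k,G}$, and every element of $M_{k,G}$ is, up to a nonzero rational scalar $1/|G|$, of this form. Applying $\operatorname{Tr}_G$ to a monomial $E_{\alpha_1}\cdots E_{\alpha_k}$ and using Lemma~\ref{L:essential EaA} (which gives $E_\alpha * A = E_{\alpha A}$ for $A \in \SL_2$, hence $(E_{\alpha_1}\cdots E_{\alpha_k})*A = E_{\alpha_1 A}\cdots E_{\alpha_k A}$ for $A \in \SL_2(\ZZ/N\ZZ)$, and for diagonal $A = \left(\begin{smallmatrix}1&0\\0&d\end{smallmatrix}\right)$ one gets $\sigma_d$ applied to the $q$-expansion, i.e. multiplication of the constant contributions by powers of $\zeta_N$) yields exactly a combination of the forms $\sum_{A\in G}\zeta_N^{j\det A} E_{\alpha_1 A}\cdots E_{\alpha_k A}$ once one decomposes $G$ over its intersection with $\SL_2$ and runs over coset representatives indexed by $\det$. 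Then I would note that a $\QQ(\zeta_N)$-vector space is a $\QQ$-vector space spanned by $\zeta_N^j \cdot (\text{basis})$ for $0 \le j < [\QQ(\zeta_N):\QQ]$, which absorbs the scalar $\zeta_N^{j\det A}$ (or rather produces the stated range of $j$), and apply $\operatorname{Tr}_G$ $\QQ$-linearly to the $\QQ$-spanning set of $M_k(\Gamma(N),\QQ(\zeta_N))$ given by $\{\zeta_N^j E_{\alpha_1}\cdots E_{\alpha_k}\}$ to conclude.

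The main obstacle I expect is purely a matter of carefully tracking how the $\det$-character enters: the $*$-action restricted to the diagonal matrices acts through $\sigma_d$, which is $\QQ(\zeta_N)$-semilinear rather than linear, so the trace $\operatorname{Tr}_G$ is not $\QQ(\zeta_N)$-linear but only $\QQ$-linear (and $K_G$-linear on the relevant subspace). This is exactly why the statement is phrased over $\QQ$ and why the extra parameter $j$ with $0 \le j < [\QQ(\zeta_N):\QQ]$ appears — it records the $\QQ$-basis $1,\zeta_N,\ldots,\zeta_N^{[\QQ(\zeta_N):\QQ]-1}$ of $\QQ(\zeta_N)$. Getting the indexing of (\ref{E:is a trace}) to match this correctly, rather than the genuinely different task of proving the span, is where the care is needed. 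I would also double-check the edge case that monomials with some $\alpha_i = 0$ genuinely need not be included; since $E_{(0,0)} = 0$, such monomials vanish identically, so restricting to $\alpha_i \neq 0$ loses nothing.
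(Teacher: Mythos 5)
Your proposal is correct and follows essentially the same route as the paper: express the monomials $E_{\alpha_1}\cdots E_{\alpha_k}$ as a spanning set via Theorem~\ref{T:Eisenstein span}, pass to a $\QQ$-spanning set of $M_k(\Gamma(N),\QQ(\zeta_N))$ using the powers of $\zeta_N$, and apply the averaging operator $f\mapsto \sum_{A\in G} f*A$ (which the paper calls $T$ and you call $\operatorname{Tr}_G$), using that $T(f)=|G|f$ on $M_{k,G}$ for surjectivity and Lemma~\ref{L:essential EaA} plus multiplicativity of $*$ to identify $T(\zeta_N^j E_{\alpha_1}\cdots E_{\alpha_k})$ with the form~(\ref{E:is a trace}). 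The one genuine (minor) divergence: for the descent from ``spans $M_k(\Gamma(N))$ over $\CC$'' to ``spans $M_k(\Gamma(N),\QQ(\zeta_N))$ over $\QQ(\zeta_N)$'' the paper cites Katz's result that $M_k(\Gamma(N),\QQ(\zeta_N))\otimes_{\QQ(\zeta_N)}\CC\to M_k(\Gamma(N))$ is an isomorphism, whereas you argue directly via $q$-expansion coefficients that a $\QQ(\zeta_N)$-rational form that is a $\CC$-combination of $\QQ(\zeta_N)$-rational forms is already a $\QQ(\zeta_N)$-combination; your version is more elementary and avoids the external citation, while the paper's is shorter and leans on a standard result. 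One small remark: you do not need to ``decompose $G$ over its intersection with $\SL_2$ and run over coset representatives'' — the identity $(\zeta_N^j E_{\alpha_1}\cdots E_{\alpha_k})*A=\zeta_N^{j\det A}E_{\alpha_1 A}\cdots E_{\alpha_k A}$ holds uniformly for all $A\in\GL_2(\ZZ/N\ZZ)$ directly from Lemma~\ref{L:essential EaA} and the fact that the $*$-action is a ring homomorphism acting as $\sigma_{\det A}$ on $\QQ(\zeta_N)$, so the sum over $G$ gives~(\ref{E:is a trace}) without any coset bookkeeping.
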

\begin{proof}
Let $S$ be the set of modular forms of the form $\zeta_N^j E_{\alpha_1} \cdots E_{\alpha_k}$ with pairs $\alpha_1,\ldots, \alpha_k \in (\ZZ/N\ZZ)^2-\{(0,0)\}$ and an integer $0\leq j <\phi(N):=[\QQ(\zeta_N):\QQ]$.  Since $E_{(0,0)}=0$, Theorem~\ref{T:Eisenstein span} implies that $S$ spans the complex vector space $M_k(\Gamma(N))$.  We have $S\subseteq M_k(\Gamma(N),\QQ(\zeta_N))$ by Lemma~\ref{L:Eisenstein expansion}.   Since $k\geq 2$ and $N>2$, the natural map $M_k(\Gamma(N),\QQ(\zeta_N))\otimes_{\QQ(\zeta_N)} \CC \to M_k(\Gamma(N))$ is an isomorphism of complex vector spaces, cf.~\cite[\S1.7]{MR0447119}.     Since $S\subseteq M_k(\Gamma(N),\QQ(\zeta_N))$ spans $M_k(\Gamma(N))$, we deduce that $S$ also spans the $\QQ(\zeta_N)$-vector space $M_k(\Gamma(N),\QQ(\zeta_N))$.  Since $1,\zeta_N, \ldots, \zeta_N^{\phi(N)-1}$ is a basis of $\QQ(\zeta_N)$ as a vector space over $\QQ$, we find $S$ further spans $M_k(\Gamma(N),\QQ(\zeta_N))$ as a vector space over $\QQ$.

Define the $\QQ$-linear map $T\colon M_k(\Gamma(N),\QQ(\zeta_N)) \to M_{k,G}$, $f\mapsto \sum_{A\in G} f*A$.  The map $T$ is surjective since we have $T(f)=|G|\,f$ for all $f\in M_{k,G}$.   Therefore, $M_{k,G}$ as a $\QQ$-vector space is spanned by the set of $T(f)$ with $f\in S$.   It remains to compute $T(f)$ for $f\in S$.  Take any $f=\zeta_N^j E_{\alpha_1} \cdots E_{\alpha_k}\in S$. We have 
\begin{align*}
T(f)= \sum_{A\in G} (\zeta_N^j E_{\alpha_1} \cdots E_{\alpha_k})*A =\sum_{A\in G} \zeta_N^{j\det A} (E_{\alpha_1}*A) \cdots (E_{\alpha_k}*A).
\end{align*}
Finally note that $T(f)$ equals (\ref{E:is a trace}) by Lemma~\ref{L:essential EaA}.
\end{proof}

\begin{remark}
In \cite{OpenImage}, under the extra assumption $\det(G)=(\ZZ/N\ZZ)^\times$, a version of Lemma~\ref{L:KM reinterpreted} is required for an algorithm which finds an explicit basis of $M_{k,G}$ for any given even integer $k\geq 2$.   Using a suitable weight $k\in \{2,4,6\}$, this basis can then used to compute an explicit model of the curve $X_G$ over $K_G=\QQ$.
\end{remark}

\section{A basis of modular forms with relatively small coefficients} \label{S:basis}

Fix an integer $N>2$ and let $G$ be a subgroup of $\GL_2(\ZZ/N\ZZ)$ that satisfies $-I \in G$.  In \S\ref{SS:MkG}, we defined a finite dimensional $K_G$-vector space $M_{k,G}$ of modular forms of level $N$ and weight $k$.   Since $-I \in G$, we find that $M_{k,G}=0$ for $k$ odd.  

In this section, we prove the following result which shows that there is basis of $M_{k,G}$, viewed as a vector over $\QQ$, consisting of modular forms whose $q$-expansion at each cusp has integral and relatively small coefficients.

\begin{thm} \label{T:small basis}
Fix an even integer $k\geq 2$.   Then there is a basis $f_1,\ldots, f_d$ of the $\QQ$-vector space $M_{k,G}$ such that for every $1\leq i \leq d$ and every $A\in \GL_2(\ZZ/N\ZZ)$, we have
\[
f_i * A = \sum_{n=0}^\infty a_n q_N^n,
\]
where each coefficient $a_n$ lies in $\ZZ[\zeta_N]$ and satisfies 
\[
|a_n|_v \leq 2|G| 4.5^k N^{k} \max\{N^k,n^{2k}\}
\]
for all infinite places $v$ of $\QQ(\zeta_N)$.
\end{thm}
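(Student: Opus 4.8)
The plan is to build the basis from the explicit spanning set of $M_{k,G}$ given by Lemma~\ref{L:KM reinterpreted} and then estimate its $q$-expansions by hand. By that lemma $M_{k,G}$ is spanned over $\QQ$ by the finitely many modular forms $F=\sum_{A\in G}\zeta_N^{j\det A}\,E_{\alpha_1 A}\cdots E_{\alpha_k A}$ with $\alpha_i\in(\ZZ/N\ZZ)^2-\{0\}$ and $0\le j<\phi(N)$. First I would show that $2N\cdot E_\gamma$ has $q$-expansion in $\ZZ[\zeta_N][\![q_N]\!]$ for every $\gamma\neq 0$: by Lemma~\ref{L:Eisenstein expansion} every coefficient of $E_\gamma$ beyond the constant term $c_0$ is a $\ZZ$-combination of roots of unity, and $2Nc_0\in\ZZ[\zeta_N]$ — if $a\not\equiv0$ then $2Nc_0=N-2a_0\in\ZZ$, while if $a\equiv0$, $b\not\equiv0$ then $2Nc_0=N\tfrac{1+\zeta_N^b}{1-\zeta_N^b}$ and $\tfrac{N}{1-\zeta_N^b}\in\ZZ[\zeta_N]$, which follows by writing $\zeta_N^b=\zeta_M^{b'}$ with $M=N/\gcd(N,b)\ge2$, $\gcd(b',M)=1$, $1\le b'\le M-1$, whence $\tfrac{N}{1-\zeta_N^b}=\gcd(N,b)\prod_{1\le j\le M-1,\,j\ne b'}(1-\zeta_M^j)$ using $\prod_{j=1}^{M-1}(1-\zeta_M^j)=M$. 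Hence $(2N)^kF\in M_{k,G}$ has integral $q$-expansion; I would then take $f_1,\dots,f_d$ to be a $\QQ$-basis of $M_{k,G}$ contained in the set $\{(2N)^kF\}$.

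\textbf{Reduction to a complex estimate.}
It remains to bound $|a_n(f_i*A)|_v$ for all $A\in\GL_2(\ZZ/N\ZZ)$ and infinite $v$. An infinite place $v$ of $\QQ(\zeta_N)$ is given by an embedding which on $\QQ(\zeta_N)$ agrees with $\sigma_{d_v}$ for some $d_v\in(\ZZ/N\ZZ)^\times$; applying it to the $q$-expansion of $F*A$ gives, by Lemma~\ref{L:star action}, the $q$-expansion of $F*\bigl(A\left(\begin{smallmatrix}1&0\\0&d_v\end{smallmatrix}\right)\bigr)$. So $|a_n(f_i*A)|_v=(2N)^k|a_n(F*A')|$ with $A'=A\left(\begin{smallmatrix}1&0\\0&d_v\end{smallmatrix}\right)$ and the ordinary absolute value on the right. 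Writing any $A'$ as $\gamma_0\left(\begin{smallmatrix}1&0\\0&\det A'\end{smallmatrix}\right)$ with $\gamma_0\in\SL_2(\ZZ/N\ZZ)$, and using that the slash operator of weight $1$ is multiplicative across weights together with $E_\alpha|_1\gamma=E_{\alpha\gamma}$ (Lemma~\ref{L:essential EaA}), one gets $F*A'=\sum_{B\in G}u_B\,E_{\alpha_1 BA'}\cdots E_{\alpha_k BA'}$ with each $u_B$ a root of unity and each $\alpha_i BA'\ne0$. Thus $|a_n(f_i*A)|_v\le(2N)^k|G|\,\max_{\gamma_1,\dots,\gamma_k\neq0}\bigl|a_n(E_{\gamma_1}\cdots E_{\gamma_k})\bigr|$, and it suffices to prove $(2N)^k\bigl|a_n(E_{\gamma_1}\cdots E_{\gamma_k})\bigr|\le 2\cdot4.5^kN^k\max(N^k,n^{2k})$.

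\textbf{The analytic estimate.}
For this I would use the $q$-expansion together with Fourier inversion along a horizontal line. By Lemma~\ref{L:Eisenstein expansion} the $l$-th coefficient of $E_\gamma$ for $l\ge1$ is a sum of at most $2d(l)\le2l$ roots of unity (where $d(l)$ is the number of divisors of $l$), and $|c_0|\le N/6$; hence for $\tau=x+iy$ and $t:=2\pi y/N$,
\[
|E_\gamma(\tau)|\ \le\ \frac N6+2\sum_{l\ge1}d(l)e^{-lt}\ \le\ \frac N6+2\sum_{l\ge1}l\,e^{-lt}\ =\ \frac N6+\frac1{2\sinh^2(t/2)}\ \le\ \frac N6+\frac2{t^2}.
\]
Since $f:=E_{\gamma_1}\cdots E_{\gamma_k}$ is a modular form of weight $k$ on $\Gamma(N)$, integrating $f(x+iy)e^{-2\pi inx/N}$ over a period gives $|a_n(f)|\le e^{nt}\bigl(\tfrac N6+\tfrac2{t^2}\bigr)^k$ for every $t>0$. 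I would then choose $t$ by size of $n$: for $n\ge k\sqrt{N/3}$ take $t=2k/n$, so that $\tfrac N6\le\tfrac2{t^2}$ and $|a_n(f)|\le e^{2k}(n/k)^{2k}$; for $1\le n<k\sqrt{N/3}$ take $t=1.4k/n$, so that $\tfrac2{t^2}<0.34N$ and $|a_n(f)|\le e^{1.4k}(0.51N)^k\le(2.1N)^k$; and $|a_0(f)|\le(N/6)^k$. Multiplying by $(2N)^k|G|$ and comparing with the target, the three cases reduce to the elementary inequalities $(2e^2/k^2)^k\le2\cdot4.5^k$ (using $n\ge k\sqrt{N/3}\ge\sqrt N$, so the target is $2|G|4.5^kN^kn^{2k}$), $4.2^k\le2\cdot4.5^k$, and $(1/3)^k\le2\cdot4.5^k$, all valid for even $k\ge2$; this completes the proof.

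\textbf{The main obstacle.}
The crux is the constant-chasing in the third step. Integrality forces the scaling by $(2N)^k$, and this factor $2^kN^k$ consumes most of the budget $4.5^kN^k$, so the analytic bound on $|a_n(E_{\gamma_1}\cdots E_{\gamma_k})|$ must be kept fairly sharp. In particular, in the range $n<k\sqrt{N/3}$ the naive choice of evaluation height balancing $N/6$ against $2/t^2$ (i.e.\ $t\sim N^{-1/2}$) yields the constant $(2e^2/3)^k\approx(4.93)^k$, which is already larger than $4.5^k$; one genuinely needs the slightly off-balance choice $t\approx1.4k/n$ — equivalently, evaluating at imaginary part proportional to $k/n$ rather than $N^{-1/2}$ — to pull the constant below $4.5$. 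By contrast the passage from $|\cdot|_v$ to $|\cdot|$, the bound $|a_l(E_\gamma)|\le2d(l)$, and the integrality of $2Nc_0$ are all routine, and the relatively weak exponent $n^{2k}$ (the true growth is only $n^{O(k)}$, in fact much smaller) leaves comfortable room elsewhere.
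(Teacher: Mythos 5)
Your proposal is correct, and the preliminary reduction (extracting a $\QQ$-basis from the scaled Eisenstein products $(2N)^k\sum_{B\in G}\zeta_N^{j\det B}E_{\alpha_1 B}\cdots E_{\alpha_k B}$ of Lemma~\ref{L:KM reinterpreted}, then passing from $|\cdot|_v$ to the complex absolute value via Lemma~\ref{L:star action}, and using Lemma~\ref{L:essential EaA} to reduce to bounding $(2N)^k|a_n(E_{\gamma_1}\cdots E_{\gamma_k})|$) is essentially the same as the paper's. But the heart of the matter — the estimate on $|a_n(E_{\gamma_1}\cdots E_{\gamma_k})|$ — is done by a genuinely different method. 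The paper multiplies out the $q$-expansions directly and bounds the resulting divisor sums: it proves (Lemma~\ref{L:Sjn}) that $\sum_{a_1+\cdots+a_j=n}\prod d(a_i)\le 2n^{j-1/2}(\log n+1)^{j-1}$, from which it derives the combinatorial bound $|a_n(E_{\gamma_1}\cdots E_{\gamma_k})|\le 2n^{-1/2}(N/4+2n(\log n+1))^k$ (Lemma~\ref{L:original Eisenstein bounds}), and only at the end weakens $\log n+1\le n$ and splits into the cases $N\lessgtr n^2$. You instead first prove a pointwise bound $|E_\gamma(x+iy)|\le N/6+2/t^2$ with $t=2\pi y/N$ (using $d(l)\le l$ and summing a geometric-type series), then apply the Cauchy estimate $|a_n(f)|\le e^{nt}\sup_x|f(x+iy)|$ and optimize over the height $y$, with the non-obvious off-balance choice $t\approx 1.4k/n$ in the range $n<k\sqrt{N/3}$. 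Both routes land on the same constant $4.5$. The paper's intermediate bound is sharper (roughly $n^{k-1/2}(\log n)^{k-1}$ rather than $n^{2k}$) and so has more slack, but it requires the divisor-sum lemma; your analytic route is shorter and self-contained, though as you note it lives closer to the edge of the budget. Two cosmetic points: you use $|c_0|\le N/6$ (via $\tfrac12|\cot(\pi b/N)|<N/(2\pi)$ and $|\tfrac12-a_0/N|\le\tfrac12\le N/6$ for $N\ge3$), which is correct and slightly sharper than the paper's $N/4$; and your intermediate numerical claim $2/t^2<0.34N$ should be $2/t^2<2N/(3\cdot1.96)\approx0.3401N$, but the ensuing inequalities $(0.51N)^k$, $(2.1N)^k$, $4.2^k\le 2\cdot4.5^k$ all still hold, so this does not affect the conclusion.
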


\subsection{Bounding coefficients}

Fix an even integer $k\geq 2$.  Lemma~\ref{L:KM reinterpreted} shows that there is a basis of $M_{k,G}$ obtained from Eisenstein series of weight $1$.   We will want to bound the size of the Fourier coefficients that arise in such a basis.    We first prove a simple lemma that will be needed for these bounds.  For each positive integer $n$, let $d(n)$ be the number of positive divisors of $n$.

\begin{lemma} \label{L:Sjn}
Take any positive integers $j$ and $n$, and define the sum
\[
S_{j,n}:=\sum_{a_1+\cdots+a_j=n} \, \prod_{i=1}^j d(a_i)
\]
where $a_1,\ldots, a_j$ vary over all positive integers.  Then $S_{j,n}  \leq 2n^{j-1/2}(\log n +1)^{j-1}$.
\end{lemma}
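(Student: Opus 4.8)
The plan is to bound $S_{j,n}$ by induction on $j$, using the elementary estimate $\sum_{a=1}^{m} d(a) \le m(\log m + 1)$ (which follows from $\sum_{a\le m} d(a) = \sum_{e\le m} \lfloor m/e\rfloor \le m\sum_{e\le m} 1/e \le m(\log m+1)$) as the base case $j=1$, where it gives $S_{1,n} = d(n) \le n(\log n+1)$, comfortably within the claimed $2n^{1/2}(\log n+1)^0 = 2\sqrt n$ for small $n$ and needing a slightly more careful treatment for the exponent $n^{j-1/2}$ rather than $n^{j-1}$.

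First I would record the partial-sum bound $D(m):=\sum_{a=1}^m d(a)\le m(\log m+1)$ and also the pointwise bound $d(a)\le 2\sqrt a$ (every divisor $\le\sqrt a$ pairs with one $\ge\sqrt a$). The induction step writes
\[
S_{j,n} = \sum_{a_j=1}^{n-j+1} d(a_j)\, S_{j-1,\,n-a_j},
\]
and then one applies the inductive hypothesis $S_{j-1,m}\le 2m^{j-3/2}(\log m+1)^{j-2}$ together with $(\log(n-a_j)+1)\le(\log n+1)$ and $(n-a_j)^{j-3/2}\le n^{j-3/2}$ to pull the factor $2n^{j-3/2}(\log n+1)^{j-2}$ outside, leaving $\sum_{a_j} d(a_j)\le D(n)\le n(\log n+1)$. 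Multiplying gives $S_{j,n}\le 2 n^{j-1/2}(\log n+1)^{j-1}$, which is exactly the claim — provided the arithmetic of the exponents works out, i.e. $(j-3/2)+1 = j-1/2$ and $(j-2)+1 = j-1$, which it does.

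The main obstacle is the half-integer exponent: a naive induction that only uses $S_{j-1,m}\le 2m^{j-3/2}(\cdots)$ and $\sum d(a_j)\le n(\log n+1)$ produces the exponent $j-1/2$ on the nose, so actually the bookkeeping is clean; the real subtlety is the base case $j=1$, where $d(n)\le n(\log n+1)$ is far weaker than the target $2\sqrt n$ for large $n$. So for $j=1$ one must instead invoke $d(n)\le 2\sqrt n = 2n^{1-1/2}(\log n+1)^{0}$ directly. I would therefore run the induction with base case $j=1$ established via the divisor-pairing bound $d(n)\le 2\sqrt n$, and the inductive step as above using $D(n)\le n(\log n+1)$ for the summation over the newly split-off variable. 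One should double-check the edge cases $n<j$ (where $S_{j,n}=0$) and small $n$ so that $\log n+1\ge 1>0$ keeps all bounds valid; these are routine.

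Altogether: establish $d(n)\le 2\sqrt n$ and $\sum_{a\le m}d(a)\le m(\log m+1)$; do the case $j=1$ from the first; for $j\ge 2$ split on the last summand, apply the inductive bound to $S_{j-1,\,n-a_j}$, bound the trailing $d(a_j)$-sum by $D(n)$, and collect exponents. I expect no genuinely hard step — only care with which of the two divisor bounds to use where.
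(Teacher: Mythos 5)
Your proposal is correct and matches the paper's proof essentially step for step: both do induction on $j$, establish the base case via the divisor-pairing bound $d(n)\leq 2\sqrt{n}$, split off one summand to write $S_{j,n}$ as a convolution with $S_{j-1}$, apply the inductive hypothesis and monotonicity to pull out $2n^{j-3/2}(\log n+1)^{j-2}$, and finish with $\sum_{a\leq n} d(a)\leq n(\log n+1)$ proved by the lattice-point count $\#\{(b,c):bc\leq n\}$ and the harmonic-sum estimate.
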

\begin{proof}
We proceed by induction on $j\geq 1$.  We have $d(n)\leq 2n^{1/2}$ since for every positive divisor $e$ of $n$ at least one of $e$ or $n/e$ is bounded above $n^{1/2}$.  The case $j=1$ is now clear since $S_{1,n}=d(n)$.

Now consider any $j\geq 1$ for which the lemma holds.  We have $S_{j+1,n} = \sum_{a=1}^n d(a) S_{j,n-a}$.   Thus by our inductive hypothesis, we have $S_{j+1,n} \leq 2 n^{j-1/2}(\log n +1)^{j-1} \sum_{a=1}^n d(a)$.  So to prove the lemma, it suffices to show that $\sum_{a=1}^n d(a)\leq n (\log n + 1)$.

We have inequalities $\sum_{a=1}^n d(a) = \#\{(b,c)\in \NN: bc \leq n\} \leq \sum_{b=1}^{n} \lfloor n/b \rfloor \leq n \sum_{b=1}^n 1/b$. Therefore, $\sum_{a=1}^n d(a) \leq n(1+ \sum_{b=2}^n 1/b)\leq n(1+\int_1^n 1/x \, dx)=n(\log n +1)$ as desired.
\end{proof}

\begin{lemma} \label{L:original Eisenstein bounds}
Fix positive integers $k$ and $N$.  For any $\alpha_1,\ldots, \alpha_k \in  (\ZZ/N\ZZ)^2$, the $q$-expansion of $E_{\alpha_1}\cdots E_{\alpha_k}$ is of form $\sum_{n=0}^\infty a_n q_N^n$ with $a_n\in \QQ(\zeta_N)$ that satisfy $|a_0| \leq (N/4)^k$ and 
\[
|a_n| \leq 2n^{-1/2} (N/4+2n(\log n+1))^k
\]
for all $n\geq 1$.
\end{lemma}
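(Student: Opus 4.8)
\emph{Proof strategy for Lemma~\ref{L:original Eisenstein bounds}.} The plan is to reduce to the single–Eisenstein–series case using Lemma~\ref{L:Eisenstein expansion} and then carefully expand the product. First I would record, for each factor, the $q$-expansion $E_{\alpha_i} = c_0^{(i)} + \sum_{n\ge 1} b_n^{(i)} q_N^{n}$ with coefficients in $\QQ(\zeta_N)$, together with two pointwise bounds: $|c_0^{(i)}|\le N/4$ and $|b_n^{(i)}| \le 2\,d(n)$ for $n\ge 1$, where $d(n)$ denotes the number of positive divisors of $n$. The bound on $b_n^{(i)}$ is immediate from the explicit expansion in Lemma~\ref{L:Eisenstein expansion}: the coefficient of $q_N^{n}$ is a sum over the divisor pairs $(m,m')$ of $n$ with $m\equiv a$ of terms $\zeta_N^{bm'}$, minus the analogous sum over $m\equiv -a$ of terms $\zeta_N^{-bm'}$, so each is a sum of at most $d(n)$ roots of unity and the triangle inequality gives $2d(n)$. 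For $c_0^{(i)}$ I would run the three cases of Lemma~\ref{L:Eisenstein expansion}: in the first case $c_0=0$, in the third case $c_0 = \tfrac12 - a_0/N \in (-\tfrac12,\tfrac12]$ so $|c_0|\le \tfrac12 \le N/4$ since $N\ge 2$, and in the remaining case $c_0 = \tfrac12(1+\zeta_N^b)/(1-\zeta_N^b)$ with $b\not\equiv 0 \pmod N$. Writing $\zeta_N^b=e^{2\pi i b/N}$ one computes $|(1+\zeta_N^b)/(1-\zeta_N^b)| = |\cot(\pi b/N)| \le \cot(\pi/N) \le N/2$, using $\sin x \ge \tfrac{2}{\pi}x$ on $[0,\pi/2]$, whence $|c_0|\le N/4$. (For $N=1$ the statement is vacuous, since $E_{(0,0)}=0$.)

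Next I would expand $E_{\alpha_1}\cdots E_{\alpha_k} = \prod_{i=1}^{k}\bigl(c_0^{(i)} + \sum_{n\ge 1} b_n^{(i)} q_N^{n}\bigr)$. The coefficient of $q_N^{0}$ is $\prod_i c_0^{(i)}$, so $|a_0|\le (N/4)^k$ at once. For $n\ge 1$, I would group the monomials of the expansion according to the nonempty set $T\subseteq\{1,\dots,k\}$ of indices whose factor contributes a positive-degree term, the chosen degrees $n_i\ge 1$ ($i\in T$) summing to $n$; the triangle inequality and the two pointwise bounds then give
\[
|a_n| \le \sum_{\emptyset\ne T\subseteq\{1,\dots,k\}} (N/4)^{k-|T|} \sum_{\substack{n_i\ge 1\ (i\in T)\\ \sum_{i\in T} n_i = n}} \prod_{i\in T} 2\,d(n_i).
\]
Sorting by $j=|T|$ there are $\binom{k}{j}$ such sets, and the inner sum equals $2^{j} S_{j,n}$ in the notation of Lemma~\ref{L:Sjn}, so applying the bound $S_{j,n}\le 2n^{j-1/2}(\log n+1)^{j-1}$ of Lemma~\ref{L:Sjn} yields
\[
|a_n| \le 2\,n^{-1/2} \sum_{j=1}^{k} \binom{k}{j} (N/4)^{k-j} (2n)^{j} (\log n+1)^{j-1},
\]
where terms with $j>n$ simply vanish.

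Finally, since $n\ge 1$ forces $\log n+1\ge 1$, I can replace $(\log n+1)^{j-1}$ by $(\log n+1)^{j}$ and then restore the $j=0$ term to recognize the binomial theorem:
\[
|a_n| \le 2\,n^{-1/2} \sum_{j=0}^{k} \binom{k}{j} (N/4)^{k-j} \bigl(2n(\log n+1)\bigr)^{j} = 2\,n^{-1/2}\bigl(N/4 + 2n(\log n+1)\bigr)^{k},
\]
which is the asserted estimate. I do not expect any genuine obstacle: the only mildly delicate point is the constant-term bound $|c_0|\le N/4$ via the cotangent estimate, and after that the argument is a careful but routine bookkeeping of the multinomial expansion combined with Lemma~\ref{L:Sjn}.
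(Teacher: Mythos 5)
Your proposal is correct and follows essentially the same route as the paper's proof: bound the constant term by $N/4$ and the $n$-th coefficient by $2d(n)$ for each factor, expand the product grouping by which factors contribute a positive-degree term, invoke Lemma~\ref{L:Sjn}, and close with the binomial theorem. The only cosmetic difference is in the constant-term estimate, where you use the cotangent identity $|(1+\zeta_N^b)/(1-\zeta_N^b)|=|\cot(\pi b/N)|$ together with $\sin x\geq \tfrac{2}{\pi}x$, while the paper instead bounds $|1+\zeta_N^b|\leq 2$ and uses $2-2\cos x\geq \tfrac{4}{\pi^2}x^2$; both are elementary and give the same $N/4$.
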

\begin{proof}
First take any $\alpha \in (\ZZ/N\ZZ)^2$ and fix $a,b\in \ZZ$ so that $\alpha\equiv (a,b) \pmod{N}$ with $0\leq a < N$ and $|b|\leq N/2$. Let $\sum_{n=0}^\infty c_n(\alpha) q_N^n$ be the $q$-expansion of $E_\alpha$.  We have $c_n(\alpha)\in \QQ(\zeta_N)$ by Lemma~\ref{L:Eisenstein expansion}.  Moreover, the explicit $q$-expansion given in Lemma~\ref{L:Eisenstein expansion} implies that $|c_n(\alpha)| \leq 2 d(n)$ for all $n\geq 1$.  

We claim that $|c_0(\alpha)|\leq N/4$.   From Lemma~\ref{L:Eisenstein expansion}, we have $|c_0(\alpha)| \leq 1/2$ if $a\not\equiv 0 \pmod{N}$ or $b\equiv 0 \pmod{N}$.  So to prove the claim, we may assume from Lemma~\ref{L:Eisenstein expansion} that $b\not \equiv 0 \pmod{N}$ and $c_0(\alpha)=1/2\cdot (1+\zeta_N^b)/(1-\zeta_N^b)$.   We have $|c_0(\alpha)|^2 \leq |1-\zeta_N^b|^{-2}=(2-2\cos(\theta))^{-1}$, where $\theta:=2\pi b/N \in [-\pi,\pi]$.   Since $2-2\cos(x) \geq 4/\pi^2 \cdot x^2$ for all real $x\in [-\pi,\pi]$ and $\theta \neq 0$, we deduce that $|c_0(\alpha)|^2 \leq \pi^2/4 \cdot \theta^{-2}=N^2/(16b^2) \leq N^2/16$.  Therefore, $|c_0(\alpha)| \leq N/4$ as claimed.

Multiplying the $q$-expansions of $E_{\alpha_1}, \ldots, E_{\alpha_k}$ together, we have
\begin{align} \label{E:Eisenstein an explicit}
a_n = \sum_{n_1+\cdots+n_k = n,\, n_i\geq 0} \,\prod_{i=1}^k c_{n_i}(\alpha_i).
\end{align}
For $n=0$, we have $|a_0| = \prod_{i=1}^k |c_0(\alpha_i)| \leq (N/4)^k$ which proves the lemma in this case.

Now take any integer $n\geq 1$.  For each subset $I \subseteq \{1,\ldots, k\}$, we can consider those terms in the sum (\ref{E:Eisenstein an explicit}) for which we have $n_i=0$ exactly when $i\in I$. Using our bounds for the coefficients $c_n(\alpha)$, we deduce that
\[
|a_n| \leq \sum_{j=1}^k \binom{k}{j} \cdot (N/4)^{k-j}  \sum_{a_1+\cdots + a_j=n, a_i \geq 1}\, \prod_{i=1}^j 2d(a_i).
\]
Equivalently,  $|a_n| \leq \sum_{j=1}^k \binom{k}{j} \cdot (N/4)^{k-j} \cdot 2^j S_{j,n}$ with $S_{j,n}$ as in Lemma~\ref{L:Sjn}.  By Lemma~\ref{L:Sjn}, we obtain the upper bounds
\[
|a_n| \leq 2 \sum_{j=1}^k \binom{k}{j} \cdot (N/4)^{k-j} \cdot 2^j n^{j-1/2} (\log n +1)^j \leq 2n^{-1/2} \sum_{j=1}^k \binom{k}{j} (N/4)^{k-j}(2n(\log n +1))^j.
\]
So by the binomial theorem, we have $|a_n| \leq 2n^{-1/2} (N/4+2n(\log n+1))^k$.
\end{proof}

\begin{lemma} \label{L:estimate for product of Eisenstein series at all cusps}
Fix positive integers $k$ and $N$ and take any $\alpha_1,\ldots, \alpha_k \in  (\ZZ/N\ZZ)^2$.   Define the modular form $h:=(2N)^k E_{\alpha_1}\cdots E_{\alpha_k} \in M_k(\Gamma(N),\QQ(\zeta_N))$.  For any $A \in \GL_2(\ZZ/N\ZZ)$, the $q$-expansion of $h*A$ is of form $\sum_{n=0}^\infty a_n q_N^n$ so that each coefficient $a_n$ is an element of $\ZZ[\zeta_N]$
that satisfy $|a_0| \leq N^{2k}/2^k$ and 
\[
|a_n| \leq 2n^{-1/2} N^k (N/2+4n(\log n+1))^k
\]
for all $n\geq 1$.
\end{lemma}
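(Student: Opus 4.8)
The plan is to reduce to the case $A = I$ and then combine Lemma~\ref{L:original Eisenstein bounds} with Lemma~\ref{L:essential EaA}, plus one integrality check. First I would use Lemma~\ref{L:essential EaA}: since $E_{\alpha_i}*A = E_{\alpha_i A}$ and the $*$-action is multiplicative across weights on $\bigoplus_k M_k(\Gamma(N))$, we get $h*A = (2N)^k E_{\alpha_1 A}\cdots E_{\alpha_k A}$, which is again $(2N)^k$ times a product of $k$ weight-one Eisenstein series indexed by arbitrary elements of $(\ZZ/N\ZZ)^2$. So it suffices to prove the bounds and the integrality for $(2N)^k E_{\beta_1}\cdots E_{\beta_k}$ with $\beta_1,\dots,\beta_k \in (\ZZ/N\ZZ)^2$ arbitrary.

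For the archimedean estimates I would simply distribute the factor $(2N)^k$ through the bounds of Lemma~\ref{L:original Eisenstein bounds}. Writing $E_{\beta_1}\cdots E_{\beta_k} = \sum_{n\ge 0} b_n q_N^n$, that lemma gives $|b_0|\le (N/4)^k$ and $|b_n|\le 2n^{-1/2}(N/4+2n(\log n+1))^k$ for $n\ge 1$; hence $a_n = (2N)^k b_n$ satisfies $|a_0| \le (2N\cdot N/4)^k = N^{2k}/2^k$ and, for $n\ge 1$, $|a_n| \le 2n^{-1/2}\bigl(2N(N/4+2n(\log n+1))\bigr)^k = 2n^{-1/2}N^k(N/2+4n(\log n+1))^k$. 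This is purely formal once the $k$-th power is expanded.

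The substantive point is the integrality claim $a_n \in \ZZ[\zeta_N]$. By Lemma~\ref{L:Eisenstein expansion}, each $E_\beta$ has $q$-expansion $c_0 + \sum_{n\ge 1}(\cdots)q_N^n$ in which every positive-degree coefficient is a $\ZZ$-linear combination of powers of $\zeta_N$, hence lies in $\ZZ[\zeta_N]$, while $c_0\in\QQ(\zeta_N)$ is given by an explicit three-case formula. It therefore suffices to check $2N c_0 \in \ZZ[\zeta_N]$, for then $2N E_\beta$, and so $(2N)^k E_{\beta_1}\cdots E_{\beta_k}$, has $q$-expansion in $\ZZ[\zeta_N][\![q_N]\!]$. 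When $c_0 = 0$ or $c_0 = \tfrac12 - a_0/N$ this is trivial, as $2N c_0$ equals $0$ or $N-2a_0\in\ZZ$. In the remaining case ($a\equiv 0$, $b\not\equiv 0 \bmod N$) we have $2N c_0 = (1+\zeta_N^b)\cdot N/(1-\zeta_N^b)$, so everything comes down to showing $N/(1-\zeta_N^b)$ is an algebraic integer. I would argue this elementarily: since $N\nmid b$ we have $\sum_{\ell=1}^N \zeta_N^{b\ell}=0$, whence $N = \sum_{\ell=1}^N(1-\zeta_N^{b\ell}) = (1-\zeta_N^b)\sum_{\ell=1}^N(1+\zeta_N^b+\cdots+\zeta_N^{b(\ell-1)})$, exhibiting $N/(1-\zeta_N^b)$ as an element of $\ZZ[\zeta_N]$. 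This cyclotomic divisibility is the only step that is not pure bookkeeping; everything else follows directly from the two preceding lemmas.
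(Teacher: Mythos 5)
Your proof is correct and follows essentially the same route as the paper: reduce via Lemma~\ref{L:essential EaA} to a product $(2N)^k E_{\beta_1}\cdots E_{\beta_k}$, distribute $(2N)^k$ through the bounds of Lemma~\ref{L:original Eisenstein bounds}, and reduce integrality to $N/(1-\zeta_N^b)\in\ZZ[\zeta_N]$. The only (cosmetic) difference is that the paper derives this last cyclotomic fact by observing that $\zeta_N^b-1$ is a root of $((x+1)^N-1)/x$, whose constant term is $N$, whereas you use the telescoping identity $N=\sum_{\ell=1}^N(1-\zeta_N^{b\ell})=(1-\zeta_N^b)\sum_{\ell=1}^N(1+\zeta_N^b+\cdots+\zeta_N^{b(\ell-1)})$ — both are equally short and elementary.
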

\begin{proof}
By Lemma~\ref{L:Eisenstein expansion}, we find that $h$, and hence also $h*A$, are modular forms in $M_k(\Gamma(N),\QQ(\zeta_N))$.   Using Lemma~\ref{L:essential EaA}, we have $h*A=(2N)^k (E_{\alpha_1}*A)\cdots (E_{\alpha_k}*A)= (2N)^k E_{\beta_1}\cdots E_{\beta_k}$, where $\beta_i:=\alpha_i A$.  Therefore, the desired bounds on the $|a_n|$ follow from those of Lemma~\ref{L:original Eisenstein bounds} after multiplying by $(2N)^k$.    

Finally, to prove that each $a_n$ lies in $\ZZ[\zeta_N]$, it suffices to show that the $q$-expansion of $2N\cdot E_{\beta_i}$ has coefficients in $\OO_{\QQ(\zeta_N)}=\ZZ[\zeta_N]$ for all $1\leq i \leq k$. From the explicit $q$-expansions given in Lemma~\ref{L:Eisenstein expansion}, it suffices to show that $N/(1-\zeta_N^b)$ lies in $\ZZ[\zeta_N]$ for any integer $b\not\equiv 0 \pmod{N}$.  We have $N/(1-\zeta_N^b) \in \ZZ[\zeta_N]$ since $\zeta_N^{b}-1$ is a root of $((x+1)^N-1)/x=x^{N-1}+\cdots + N \in \ZZ[x]$.  
\end{proof}

\subsection{Proof of Theorem~\ref{T:small basis}}

By Lemma~\ref{L:KM reinterpreted}, there is a basis $f_1,\ldots, f_d$ of the $\QQ$-vector space $M_{k,G}$ such every $f_i$ has a $q$-expansion of the form
\begin{align} \label{E:trace final}
(2N)^k \sum_{g\in G} \zeta_N^{j \det g} E_{\alpha_1g} \cdots E_{\alpha_k g}
\end{align}
for some $\alpha_1,\ldots, \alpha_k \in (\ZZ/N\ZZ)^2$ and integer $j$.   

Take any $1\leq i \leq d$ and $A\in \GL_2(\ZZ/N\ZZ)$.   Let $\sum_{n=0}^\infty a_n q_N^n$ be the $q$-expansion of $f_i*A$.    Take any infinite place $v$ of $\QQ(\zeta_N)$.   Since $\QQ(\zeta_N)\subseteq \CC$, there is a $\sigma\in \Gal(\QQ(\zeta_N)/\QQ)$ such that $|a|_v=|\sigma(a)|$ for all $a\in \QQ(\zeta_N)$.  There is a unique $d'\in (\ZZ/N\ZZ)^\times$ for which $\sigma(\zeta_N)=\zeta_N^{d'}$.  Define $B:=A\left(\begin{smallmatrix}1 & 0 \\0 & d'\end{smallmatrix}\right) \in \GL_2(\ZZ/N\ZZ)$.  By Lemma~\ref{L:star action}, we have $f_i*B = \sum_{n=0}^\infty \sigma(a_n) q_N^n$.   Since the $q$-expansion of $f_i$ is of the form (\ref{E:trace final}), Lemma~\ref{L:estimate for product of Eisenstein series at all cusps} implies that all of the $\sigma(a_n)$ are in $\ZZ[\zeta_N]$ and 
\[
|a_n|_v=|\sigma(a_n)|\leq 
\begin{cases}
|G| \cdot N^{2k}/2^k & \text{if $n=0$,}\\
 |G| \cdot 2n^{-1/2} N^k (N/2+4n(\log n+1))^k & \text{if $n\geq 1$.}
 \end{cases}
 \]
It remains to prove that $ |a_n|_v \leq 2|G| 4.5^k N^{k} \max\{N^k,n^{2k}\}$.  This is immediate for $n=0$ from the above bound, so assume that $n\geq 1$.  We have $\log n + 1 \leq n$, so $|a_n|_v \leq 2 |G|N^k (N/2+4n^2)^k$.  When $N \leq n^2$, we have $|a_n|_v \leq 2 |G| N^k (4.5 n^2)^k$.   When $N\geq n^2$, we have $|a_n|_v \leq 2|G| N^k (4.5N)^k$.  The theorem is now immediate.

\section{Riemann--Roch} \label{S:RR}
 
 Fix an integer $N>2$ and let $G$ be a subgroup of $\GL_2(\ZZ/N\ZZ)$ with $-I\in G$.  Let $g$ be the genus of the curve $X_G$ and let $\mu$ be the degree of the morphism $j\colon X_G \to \PP^1_{K_G}$.   The group $\Gal_{K_G}$ acts on the set of cusps $\calC_G$ of $X_G$ by Lemma~\ref{L:cusps of X(N)}(\ref{L:cusps of X(N) ii}).
 
Consider a proper subset of $\Sigma\subseteq \calC_G$ that is stable under the $\Gal_{K_G}$-action.  For our application, we will require a nonconstant function $\varphi\in K_G(X_G)$ whose poles are all at cusps and has no pole at any cusp $c\in \Sigma$.  In this section, we use the Riemann--Roch theorem to find spaces of modular forms from which we can construct such $\varphi$.

  Consider a divisor $D=\sum_{c\in \calC_G} n_c \cdot c$ of $X_G$ that is defined over $K_G$, i.e., $n_c=n_{\sigma(c)}$ for all $c\in \calC_G$ and $\sigma\in \Gal_{K_G}$.  Define the $K_G$-vector space $\scrL(D):=\{\varphi \in K_G(X_G) : \operatorname{div}(\varphi)+D\geq 0\}$; equivalently, $\scrL(D)$ consists of those functions $\varphi\in K_G(X_G)$ whose poles all occur at cusps and $\ord_c(\varphi)+n_c \geq 0$ for all $c\in \calC_G$.  By the Riemann--Roch theorem, we have $\dim_{K_G} \scrL(D) \geq \deg(D)-g+1$ with equality holding when $\deg(D)> 2g-2$.\\
  
 For a fixed positive integer $m$, define the divisors $D_0:=\sum_{c\in \calC_G} mw_c\cdot c$ and $D_1:=\sum_{c\in \calC_G-\Sigma} mw_c\cdot c$.  The divisors $D_0$ and $D_1$ are defined over $K_G$ since $\calC_G$ and $\Sigma$ are both stable under the $\Gal_{K_G}$-action (also $w_{\sigma(c)}=w_c$ for all $c\in \calC_G$ and $\sigma \in \Gal_{K_G}$). The functions in $\scrL(D_0)$ have no poles away from the cusps.  The functions in the subspace $\scrL(D_1)\subseteq \scrL(D_0)$ are regular at all $c\in \Sigma$.
 
By Lemma~\ref{L:quotient of modular forms}, we have an injective $K_G$-linear map
\[
T\colon M_{12m,G} \hookrightarrow K_G(X_G),\quad f\mapsto f/\Delta^m.
\]
Let $W_m$ be the $K_G$-subspace of $M_{12m,G}$ consisting of those modular forms $f$ for which $\nu_{c}(f) \geq mw_c$ for all $c\in \Sigma$, where $\nu_{c}$ is defined in \S\ref{SS:MkG}.

\begin{lemma} \label{L:T image}
\begin{romanenum}
\item
We have $T(M_{12m,G})=\scrL(D_0)$ and $T(W_m)=\scrL(D_1)$.  
\item
We have $\dim_{K_G} M_{12m,G}=m\mu-g+1$ and 
\[
\dim_{K_G} W_m \geq m\sum_{c\in \calC_G-\Sigma} w_c -g+1.
\]
\end{romanenum}
\end{lemma}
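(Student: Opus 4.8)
The plan is to show that $T$ restricts to $K_G$-linear isomorphisms $M_{12m,G}\xrightarrow{\sim}\scrL(D_0)$ and $W_m\xrightarrow{\sim}\scrL(D_1)$, and then to deduce part (ii) from the Riemann--Roch theorem together with the degree computations $\deg D_0=m\sum_{c\in\calC_G}w_c=m\mu$ and $\deg D_1=m\sum_{c\in\calC_G-\Sigma}w_c$, both immediate from (\ref{E:sum of widths}). The one input about $\Delta$ that I need is its order at each cusp. Since $\Delta$ is a weight-$12$ modular form on $\SL_2(\ZZ)$ with integral $q$-expansion $q+O(q^2)$, it lies in $M_{12m}(\Gamma(N),\ZZ)$ and is fixed by the $*$-action of all of $\GL_2(\ZZ/N\ZZ)$ (it is invariant under the weight-$12$ slash operator of $\SL_2(\ZZ)$ and has rational $q$-expansion), so in particular $\Delta^m\in M_{12m,G}$. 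For a cusp $c\in\calC_G$ with $A\cdot c_\infty=c$ and $A\in\SL_2(\ZZ/N\ZZ)$, rewriting $\Delta^m*A=\Delta^m$ in the variable $q_{w_c}=q_N^{N/w_c}$ (using $q_{w_c}^{w_c}=q_N^N$) shows $\Delta^m*A=q_{w_c}^{mw_c}+\cdots$, hence $\nu_c(\Delta^m)=mw_c$; equivalently, $\Delta^m$ is nowhere zero on $\calH$ and has divisor exactly $D_0$ on $X_G$. Applying (\ref{E:ordc}) to $f$ and $\Delta^m$ then gives
\[
\ord_c(f/\Delta^m)=\nu_c(f)-mw_c
\]
for every nonzero $f\in M_{12m,G}$ and every cusp $c\in\calC_G$.

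For one inclusion: given $f\in M_{12m,G}$, the function $\varphi:=T(f)$ lies in $K_G(X_G)$ by Lemma~\ref{L:quotient of modular forms}; since $f$ is holomorphic on $\calH$ and $\Delta$ vanishes nowhere there, $\varphi$ has no pole away from the cusps, and at each cusp $c$ we have $\ord_c(\varphi)=\nu_c(f)-mw_c\geq -mw_c$ because $\nu_c(f)\geq 0$, with $\ord_c(\varphi)\geq 0$ precisely when $\nu_c(f)\geq mw_c$. Hence $\varphi\in\scrL(D_0)$, and $\varphi\in\scrL(D_1)$ iff $\nu_c(f)\geq mw_c$ for all $c\in\Sigma$, i.e.\ iff $f\in W_m$. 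Conversely, given $\varphi\in\scrL(D_0)$, set $f:=\varphi\,\Delta^m$. It is $\Gamma(N)$-invariant of weight $12m$ and holomorphic on $\calH$ (as $\varphi$'s poles are all at cusps); and since $\operatorname{div}(\Delta^m)=D_0$ on $X_G$, the hypothesis $\operatorname{div}(\varphi)+D_0\geq 0$ says that $f$, as a section over $X_G$, is everywhere regular, hence so is it as a section over $X(N)$ (a finite morphism pulls effective divisors back to effective ones), i.e.\ $f$ is a holomorphic modular form of weight $12m$ on $\Gamma(N)$. Its $q$-expansion at $c_\infty$ has coefficients in $\QQ(\zeta_N)$, and multiplicativity of the $*$-action together with $\varphi\in\calF_N^G$ and $\Delta^m*A=\Delta^m$ gives $f*A=f$ for all $A\in G$. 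Thus $f\in M_{12m,G}$ with $T(f)=\varphi$, and if moreover $\varphi\in\scrL(D_1)$ then $\nu_c(f)=\ord_c(\varphi)+mw_c\geq mw_c$ for $c\in\Sigma$, i.e.\ $f\in W_m$. Since $T$ is $K_G$-linear and injective (as $\Delta^m\neq 0$), this proves part (i).

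Part (ii) is then nearly formal: the two isomorphisms give $\dim_{K_G}M_{12m,G}=\dim_{K_G}\scrL(D_0)$ and $\dim_{K_G}W_m=\dim_{K_G}\scrL(D_1)$, and the Riemann--Roch inequality at once yields $\dim_{K_G}W_m\geq\deg D_1-g+1=m\sum_{c\in\calC_G-\Sigma}w_c-g+1$. For the exact value of $\dim M_{12m,G}$ I need $\deg D_0=m\mu>2g-2$, and this is the only step requiring a real input. I would obtain it from the genus estimate $g+1\leq\mu/12+3/2$ (see \cite[Proposition~1.40]{MR1291394}, as in the proof of Lemma~\ref{L:trivial bounds}, using that $X_G$ has at least one cusp), which gives $2g-2\leq\mu/6-1<\mu\leq m\mu$. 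Then Riemann--Roch gives $\dim_{K_G}\scrL(D_0)=\deg D_0-g+1=m\mu-g+1$, completing the proof. The only point requiring care throughout is the width bookkeeping and the identity $\nu_c(\Delta^m)=mw_c$; the inequality $\deg D_0>2g-2$ is the one place an external fact is needed, and everything else is formal.
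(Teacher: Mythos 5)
Your proof is correct and follows essentially the same route as the paper: establish $\nu_c(\Delta^m)=mw_c$, use~(\ref{E:ordc}) to identify $T(M_{12m,G})=\scrL(D_0)$ and $T(W_m)=\scrL(D_1)$, then apply Riemann--Roch together with the genus bound from \cite[Proposition~1.40]{MR1291394} to get $\deg D_0 = m\mu > 2g-2$ and hence part (ii). The only stylistic difference is that you phrase the converse inclusion via divisors and pullbacks along $X(N)\to X_G$, whereas the paper verifies the cusp conditions $\ord_c(\varphi)+mw_c\geq 0$ directly; the underlying content is identical.
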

\begin{proof}
We first prove that $T(M_{12m,G})=\scrL(D_0)$.   Take any $f\in M_{12m,G}$ and define $\varphi:=T(f)=f/\Delta^m\in K_G(X_G)$.   Every pole of $\varphi$ is a cusp since $\Delta$, when viewed as a function of the upper half-plane, is holomorphic and everywhere nonzero.   Take any cusp $c\in \calC_G$.  By (\ref{E:ordc}), we have $\ord_c \varphi = \nu_{c}(f)-\nu_{c}(\Delta^m)=\nu_{c}(f)-m w_c \geq -mw_c$.  Therefore, $T(f)=\varphi \in \scrL(D_0)$.  We have $T(M_{12m,G})\subseteq\scrL(D_0)$ since $f$ was an arbitrary element of $M_{12m,G}$.

Now take any $\varphi\in \scrL(D_0)$.  Define $f:=\varphi \Delta^m$; it is a weakly modular form of weight $12m$.  For each $c\in \calC_G$, we have $\ord_c(\varphi) + \nu_{c}(\Delta^m)=\ord_c(\varphi) + mw_c \geq 0$, where the inequality use our choice of $\varphi$.  Therefore, $f$ is a modular form of weight $12m$ on $\Gamma(N)$.   We have $f \in M_{12m}(\Gamma(N),\QQ(\zeta_N))$ since the $q$-expansions of $\varphi$ and $\Delta$ have coefficients in $\QQ(\zeta_N)$.    For any $A\in G$, we have $f*A=(\varphi \Delta^m)*A=(\varphi*A) (\Delta^m *A)=\varphi \Delta^m$.  Therefore, $f\in M_{12m,G}$ and hence $\varphi=T(f)$ lies in $T(M_{12m,G})$.   We have $T(M_{12m,G})\supseteq\scrL(D_0)$ since $\varphi$ was an arbitrary element of $\scrL(D_0)$.   This completes the proof that $T(M_{12m,G})=\scrL(D_0)$.

For any $f\in M_{12m,G}$, we have $\ord_c(T(f))= \nu_{c}(f)-mw_c$ for each $c\in \calC_G$.  So $T(f)\in \scrL(D_1)$ if and only if $\nu_{c}(f)\geq mw_c$ for each $c\in \Sigma$.  Since $T(M_{12m,G})=\scrL(D_0)$, we deduce that $T(W_m)=\scrL(D_1)$.   

By the Riemann--Roch theorem, we have \[ \dim_{K_G} W_m = \dim_{K_G} \scrL(D_1) \geq \deg(D_1)-g+1= m\sum_{c\in \calC-\Sigma}w_c -g +1.\]  We have $\deg(D_0)=\sum_{c\in \calC_G} mw_c = m\mu$ by (\ref{E:sum of widths}) and $2g-2<\mu/6$ by \cite[Proposition~1.40]{MR1291394}.  Therefore, $\deg(D_0)>2g-2$.  By the Riemann--Roch theorem, we have $\dim_{K_G} M_{12m,G} = \dim_{K_G} \scrL(D_0) = \deg(D_1)-g+1= m\mu-g+1$.
\end{proof}
 
Take any $f\in W_m$ and define $\varphi:=T(f)=f/\Delta^m$.  By Lemma~\ref{L:T image}, $\varphi \in K_G(X_G)$ is a function that is regular at all $c\in \Sigma$ and  all of its poles are at cusps.   In order for $\varphi$ to be nonconstant, we need $f$ not to lie in the subspace $K_G\Delta^m\subseteq W_m$.     Thus to construct a nonconstant $\varphi$ in this manner, we need $\dim_{K_G} W_m \geq 2$; by Lemma~\ref{L:T image} this holds for all sufficiently large $m$.

\section{Existence of a certain modular form} \label{S:existence certain}

Fix an integer $N>2$ and let $G$ be a subgroup of $\GL_2(\ZZ/N\ZZ)$ with $-I\in G$.   Let $g$ be the genus of the curve $X_G$ and let $\mu$ be the degree of the morphism $j\colon X_G \to \PP^1_{K_G}$.   

Let $\Sigma$ be a proper subset of $\calC_G$ that is stable under the $\Gal_{K_G}$-action.  Let $m$ be the smallest positive integer for which $m\sum_{c\in \calC_G-\Sigma} w_c > g$.     Define the real number 
\[
\beta:=2 (2^3 4.5^{36m}  N^{108m+15} m^{72m+1})^{m\mu+1}   4.5^{12m} N^{36m+4}.
\]

\begin{thm} \label{T:modular form of weight k}
 There is a nonzero modular form $f \in M_{12m,G}$ that satisfies the following properties:
\begin{alphenum}
\item  \label{T:modular form of weight k a}
We have $\nu_{c}(f) \geq mw_c$ for all $c\in \Sigma$. 
\item  \label{T:modular form of weight k b}
The modular form $f$ does not lie in $K_G \Delta^m$.
\item  \label{T:modular form of weight k c}
Take any $A\in \SL_2(\ZZ/N\ZZ)$.  Define the cusp $c:=A \cdot c_\infty$ of $X_G$, where $c_\infty$ is the cusp at infinity, and set $w=w_c$.  Then we have $f*A=\sum_{n=0}^\infty b_n q_w^n$,
where each coefficient $b_n$ lies in $\ZZ[\zeta_N]$ and satisfies 
\[
|b_n|_v \leq  \beta \max\{1,(n/w)^{24m}\} 
\]
for each infinite place $v$ of $\QQ(\zeta_N)$.
\end{alphenum}
\end{thm}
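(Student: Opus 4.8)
I would build $f$ as an integer linear combination $f=\sum_{i=1}^{d}x_if_i$ of the basis $f_1,\dots,f_d$ of the $\QQ$-vector space $M_{12m,G}$ produced by Theorem~\ref{T:small basis} (applied with $k=12m$), with the coefficient vector $(x_i)\in\ZZ^{d}$ chosen small by means of Siegel's lemma. Here $d=\dim_\QQ M_{12m,G}=[K_G:\QQ]\,(m\mu-g+1)$ by Lemma~\ref{L:T image}, and Theorem~\ref{T:small basis} already records that for every $A$ the $q$-expansion of $f_i*A$ has coefficients in $\ZZ[\zeta_N]$ with explicit bounds. Hence $f*A=\sum_i x_i(f_i*A)$ automatically has coefficients in $\ZZ[\zeta_N]$, and once $\max_i|x_i|$ is controlled, property~(\ref{T:modular form of weight k c}) follows by the triangle inequality after rewriting the coefficient of $q_w^{\,n}$ in $f_i*A$ as the coefficient of $q_N^{\,nN/w}$ in the $q_N$-expansion of Theorem~\ref{T:small basis}, using $(nN/w)^{24m}=(n/w)^{24m}N^{24m}$ and the crude estimates $|G|\le N^{4}$, $\mu\le N^{3}/2$, $m\le N^{3}/24$ of Lemma~\ref{L:trivial bounds}. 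Property~(\ref{T:modular form of weight k a}) is just the assertion that $f$ lies in the subspace $W_m$ of \S\ref{S:RR}.

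\textbf{The linear system.} Membership $f\in W_m$ is the vanishing, for each $c\in\Sigma$, of the first $mw_c$ coefficients of the $q$-expansion of $f*A_c$ (choose $A_c\in\SL_2(\ZZ/N\ZZ)$ with $A_c\cdot c_\infty=c$); this is $\sum_{c\in\Sigma}mw_c\le m\mu$ linear conditions over $\ZZ[\zeta_N]$ on $(x_i)$. To force~(\ref{T:modular form of weight k b}) I adjoin $[K_G:\QQ]$ further rational conditions. Since $\Delta^m\in W_m$ has $\nu_{c_\infty}(\Delta^m)=mw_{c_\infty}$ with leading coefficient $1$, and since $K_G\Delta^m=\QQ(\zeta_N)\Delta^m\cap M_{12m,G}$ --- indeed $\Delta^m$ is fixed by all of $\GL_2(\ZZ/N\ZZ)$, so $\kappa\Delta^m\in M_{12m,G}$ forces $\kappa\in K_G$ --- the coefficient of $q_{w_{c_\infty}}^{\,mw_{c_\infty}}$ in $(\kappa\Delta^m)*I$ equals $\kappa\in K_G$. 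So it suffices to require the vanishing of $[K_G:\QQ]$ rational linear functionals of the coefficient of $q_{w_{c_\infty}}^{\,mw_{c_\infty}}$ in $f*I$ whose restrictions to $K_G$ are linearly independent; then any nonzero solution $f$ cannot be a scalar multiple of $\Delta^m$. After clearing denominators the combined system of linear equations in $(x_i)$ has integer coefficients bounded by a quantity $P$ equal to a fixed power of $N$ times the coefficient bound of Theorem~\ref{T:small basis} at indices $<m\mu$; thus $P$ is $4.5^{12m}$ times an explicit power of $N$ times $m^{24m}$.

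\textbf{Counting, then Siegel.} The point is that these conditions are highly dependent. By Lemma~\ref{L:T image} the solution space of the $W_m$-conditions alone has $\QQ$-dimension $[K_G:\QQ]\dim_{K_G}W_m\ge[K_G:\QQ]\bigl(m\sum_{c\in\calC_G-\Sigma}w_c-g+1\bigr)$, which is $\ge 2[K_G:\QQ]$ by the minimality of $m$ (the smallest positive integer with $m\sum_{c\in\calC_G-\Sigma}w_c>g$). Adjoining the $[K_G:\QQ]$ extra conditions still leaves a solution space of $\QQ$-dimension $\ge[K_G:\QQ]\ge1$, so the system has rank $r$ with $d-r\ge[K_G:\QQ]$, hence $r\le[K_G:\QQ]m\mu$ and $r/(d-r)\le m\mu+1$. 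Siegel's lemma, applied to a maximal independent subsystem, then yields a nonzero $x\in\ZZ^{d}$ with $\max_i|x_i|\le(dP)^{r/(d-r)}\le(dP)^{m\mu+1}$. Setting $f:=\sum_i x_if_i$ produces a nonzero modular form in $W_m\setminus K_G\Delta^m$, so~(\ref{T:modular form of weight k a}) and~(\ref{T:modular form of weight k b}) hold; and the exponent $m\mu+1$ together with $d<N^{7}$ and the bound on $P$ is exactly what is needed for $d\cdot\max_i|x_i|\le(d^{2}P)^{m\mu+1}\le\bigl(2^{3}\,4.5^{36m}N^{108m+15}m^{72m+1}\bigr)^{m\mu+1}$, whereupon the estimate of the first paragraph gives $|b_n|_v\le\beta\max\{1,(n/w)^{24m}\}$, which is~(\ref{T:modular form of weight k c}).

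\textbf{Expected obstacle.} The two delicate points are: first, organizing the system so that the requirement $f\notin K_G\Delta^m$ costs only $[K_G:\QQ]$ effective equations rather than $\phi(N)$ --- a naive ``one $\QQ(\zeta_N)$-valued coefficient vanishes'' spends up to $\phi(N)$ rational equations and wrecks the count when $|\det G|$ is large --- together with the verification via Lemma~\ref{L:T image} and the choice of $m$ that enough freedom survives; and second, the explicit constant chase proving~(\ref{T:modular form of weight k c}), where one must track the $q_N$-versus-$q_w$ normalization, pass from $\ZZ[\zeta_N]$-coefficients to their integer coordinates in a fixed power basis, and confirm that every exponent lands inside $\beta=2\bigl(2^{3}\,4.5^{36m}N^{108m+15}m^{72m+1}\bigr)^{m\mu+1}4.5^{12m}N^{36m+4}$.
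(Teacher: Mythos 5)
Your proposal is correct in outline and arrives at the same constant $\calB=(2^3\,4.5^{36m}N^{108m+15}m^{72m+1})^{m\mu+1}$, but it handles the two delicate points differently from the paper, and the differences are worth recording.

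To avoid $K_G\Delta^m$, the paper does \emph{not} adjoin auxiliary equations. Instead it invokes Faltings's version of Siegel's lemma (Lemma~\ref{L:Siegel-Faltings}), which yields not just one short vector but $[K_G:\QQ]+1$ linearly independent short vectors in $\ker(\alpha)\cap\ZZ^d$; since $\dim_\QQ K_G\Delta^m=[K_G:\QQ]$, at least one of them lies outside $K_G\Delta^m$ by pigeonhole. Your route — forcing $[K_G:\QQ]$ extra $\QQ$-linear conditions on the coefficient of $q_{w_{c_\infty}}^{mw_{c_\infty}}$ in $f*I$ whose restrictions to $K_G$ are independent — is logically sound (your identification $K_G\Delta^m=\QQ(\zeta_N)\Delta^m\cap M_{12m,G}$ is correct), and the dimension count $\dim_\QQ W_m\geq 2[K_G:\QQ]$ leaves room for a nonzero solution after the extra constraints. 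This is a more elementary argument that gets by with the classical Siegel lemma rather than Faltings's multi-solution form. What it costs you is extra bookkeeping: you must also bound the entries of the auxiliary rows, and you must argue that they genuinely drop the dimension by at most $[K_G:\QQ]$, both of which the pigeonhole sidesteps entirely.

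The second difference is more substantial. The paper sets up the problem over lattices: $M_1=\ZZ^d$ with the $\ell^1$-norm, $M_2=\prod_{c\in\Sigma}\OO_L^{mw_c}$ with the max of the infinite absolute values $|\cdot|_v$, and observes that every nonzero element of $\OO_L$ has some archimedean absolute value $\geq 1$ (because $|N_{L/\QQ}(a)|\geq 1$). Faltings's lemma is then applied directly to the map $\alpha\colon V_1\to V_2$ with $\|\alpha\|$ bounded by the archimedean estimates of Theorem~\ref{T:small basis}, with no need to ever rewrite a $\ZZ[\zeta_N]$-valued condition as $\phi(N)$ integer conditions. Your plan requires exactly that rewriting (``pass from $\ZZ[\zeta_N]$-coefficients to their integer coordinates in a fixed power basis''), and you correctly flag it as the obstacle but do not resolve it. As stated, the power basis is not self-dual, so the coordinates $a_j$ of $a=\sum_j a_j\zeta_N^j$ are not directly controlled by $\max_v|a|_v$ without an explicit argument; the cleaner way in your framework is to encode the vanishing of $a\in\ZZ[\zeta_N]$ by the integer-valued trace pairings $\Tr_{\QQ(\zeta_N)/\QQ}(a\zeta_N^{-\ell})$, which satisfy $|\Tr(a\zeta_N^{-\ell})|\leq\phi(N)\max_v|a|_v$ and span the dual by nondegeneracy of the trace form. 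With that substitution, the extra factor is at most $\phi(N)\leq N$ and your exponent-chase goes through. So the approach works, but the proposal as written leaves this genuine conversion step as a gap that the paper's lattice formulation of Siegel's lemma is specifically designed to avoid.
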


\subsection{Proof of Theorem~\ref{T:modular form of weight k}}
   
Set $k=12m$ and define $d:=\dim_\QQ M_{k,G}$.  Fix a basis $f_1,\ldots, f_d$ of the $\QQ$-vector space $M_{k,G}$ satisfying the conclusion of Theorem~\ref{T:small basis}.   Using Lemma~\ref{L:T image}, we find that
\[
d=[K_G:\QQ] \dim_{K_G} M_{k,G} = [K_G:\QQ](m\mu-g+1).
\]

Let $W$ be the $K_G$-subspace of $M_{k,G}$ consisting of those modular forms $f$ for which $\nu_{c}(f)\geq mw_c$ for all $c\in \Sigma$.  By Lemma~\ref{L:T image} and our choice of $m$, we have $\dim_{K_G} W \geq 2$.  We will now describe $W$ as the kernel of a linear map on $M_{k,G}$.

Define $L:=\QQ(\zeta_N)$.  For each cusp $c\in \Sigma$, we choose a matrix $A\in \SL_2(\ZZ/N\ZZ)$ for  which $A\cdot c_\infty=c$.  For any $f\in M_{k,G}$, we have  $f*A=\sum_{n=0}^\infty a_{c,n}(f)\, q_{w_c}^n$ with $a_{c,n}(f)\in L$.  Define the $\QQ$-linear map 
\[
\psi_c\colon M_{k,G}\to L^{mw_c},\quad f\mapsto (a_{c,0}(f),\ldots, a_{c,mw_c-1}(f)).
\]  
Combining the maps $\psi_c$ with $c\in \Sigma$, we obtain a $\QQ$-linear map
\[
\psi\colon M_{k,G}\to \prod_{c\in \Sigma} L^{mw_c}.
\]
Note that the kernel of $\psi$ is $W$.  Let
\[
\alpha_0\colon \QQ^d \to  \prod_{c\in \Sigma} L^{mw_c} 
\]
be the $\QQ$-linear map obtained by composing the isomorphism $\QQ^d\xrightarrow{\sim} M_{k,G}$ coming from the basis $f_1,\ldots,f_d$ with $\psi$.   We have $\dim_\QQ \ker (\alpha_0) = \dim_\QQ W$.

We have an isomorphism $L\otimes_\QQ \RR \xrightarrow{\sim} \prod_{v\in M_{L,\infty}} L_v$ of real vector spaces induced by the inclusions $L\subseteq L_v$.  Tensoring $\alpha_0$ with $\RR$ gives a $\RR$-linear map 
\[
\alpha\colon V_1\to V_2,
\] 
where $V_1=\RR^d$ and $V_2=\prod_{c\in \Sigma} \prod_{v\in M_{L,\infty}} L_v^{mw_c}$.   We have $\dim_\RR \ker (\alpha) = \dim_\QQ W$.

We define norms on $V_1$ and $V_2$ by $\norm{b}_1:=\sum_{i=1}^d |b_i|$ and $\norm{(b_{c,v})}_2:=\max_{c\in \Sigma, v\in M_{L,\infty}} |b_{c,v}|_v$, respectively.     The group $M_1:=\ZZ^d$ is a lattice in $V_1$; it is generated by elements of norm $1$.     Let $M_2$ be the lattice in $V_2$ corresponding to the subgroup $\prod_{c\in \Sigma} \OO_L^{mw_c}$ of $\prod_{c\in \Sigma} L^{mw_c}$.  For each $i\in \{1,2\}$ and nonzero $b\in M_i$, we have $\norm{b}_i\geq 1$.  For each $1\leq i \leq d$, we have $\psi_c(f_i) \in \OO_L^{mw_c}$ for all $c\in \Sigma$ by our choice of basis $f_1,\ldots, f_d$.  Therefore, $\alpha(M_1) \subseteq M_2$.    

The \emph{norm} $\norm{\alpha}$ of $\alpha$ is the minimal real number for which $\norm{\alpha(v)}_2 \leq \norm{\alpha} \norm{v}_1$ holds for all $v\in V_1$.  We now find an upper bound for $\norm{\alpha}$.

\begin{lemma} \label{L:bound on norm of alpha}
We have $\norm{\alpha} \leq 2|G| 4.5^k  N^{3k} m^{2k}$.
\end{lemma}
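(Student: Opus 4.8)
The plan is to unwind the definition of $\alpha$ and estimate each entry of $\alpha(b)$ by the triangle inequality, feeding in the coefficient bounds of Theorem~\ref{T:small basis}. Recall that for each $c\in\Sigma$ we fixed $A\in\SL_2(\ZZ/N\ZZ)$ with $A\cdot c_\infty=c$, and that $a_{c,n}(f)$ is the coefficient of $q_{w_c}^n$ in the $q$-expansion of $f*A$. Chasing through the construction, for $b=(b_1,\dots,b_d)\in V_1=\RR^d$ the $(c,v)$-component of $\alpha(b)$ lies in $L_v^{mw_c}$ and its $n$-th coordinate (for $0\le n<mw_c$) is $\sum_{i=1}^d b_i\,a_{c,n}(f_i)$, where each $a_{c,n}(f_i)\in L=\QQ(\zeta_N)$ is viewed inside $L_v$ via the embedding attached to $v$.

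First I would bound $|a_{c,n}(f_i)|_v$ uniformly over all infinite places $v$ of $L$, all $1\le i\le d$, all $c\in\Sigma$, and all $0\le n<mw_c$. Since $q_{w_c}=q_N^{N/w_c}$, the coefficient $a_{c,n}(f_i)$ coincides with the coefficient of $q_N^{nN/w_c}$ in the $q_N$-expansion of $f_i*A$. Applying Theorem~\ref{T:small basis} with the even integer $k=12m\ge 2$, that coefficient lies in $\ZZ[\zeta_N]$ and has $v$-absolute value at most $2|G|\,4.5^k N^k\max\{N^k,(nN/w_c)^{2k}\}$. The range $0\le n<mw_c$ forces $nN/w_c<mN$, hence $(nN/w_c)^{2k}<(mN)^{2k}=m^{2k}N^{2k}$, and since $m,N\ge 1$ this gives $\max\{N^k,(nN/w_c)^{2k}\}\le m^{2k}N^{2k}$. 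Therefore $|a_{c,n}(f_i)|_v\le 2|G|\,4.5^k N^{3k}m^{2k}$ uniformly.

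Then I would conclude by the triangle inequality. Since $N>2$, each infinite place $v$ of $L$ is complex and $|\cdot|_v$ restricts to the ordinary absolute value on $\RR\subseteq L_v$, so for any $b\in V_1$ and any admissible $c,v,n$,
\[
\Big|\sum_{i=1}^d b_i\,a_{c,n}(f_i)\Big|_v\le\sum_{i=1}^d|b_i|\,|a_{c,n}(f_i)|_v\le 2|G|\,4.5^k N^{3k}m^{2k}\,\norm{b}_1.
\]
Taking the maximum over $c\in\Sigma$, $v\in M_{L,\infty}$ and $0\le n<mw_c$ yields $\norm{\alpha(b)}_2\le 2|G|\,4.5^k N^{3k}m^{2k}\,\norm{b}_1$, and hence $\norm{\alpha}\le 2|G|\,4.5^k N^{3k}m^{2k}$, as claimed.

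This is essentially a bookkeeping estimate, so there is no serious obstacle; the one point that needs care is the passage between the $q_{w_c}$-expansion used to define $\psi_c$ and the $q_N$-expansion to which Theorem~\ref{T:small basis} is stated, together with the elementary exponent bound $nN/w_c<mN$ valid on the truncation range $0\le n<mw_c$. It is exactly here that the extra factors $N^{2k}$ and $m^{2k}$ (beyond the $2|G|\,4.5^k N^k$ appearing in Theorem~\ref{T:small basis}) enter.
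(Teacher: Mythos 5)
Your proof is correct and follows essentially the same route as the paper: reduce $\norm{\alpha}$ to $\max_{c,v,n,i}|a_{c,n}(f_i)|_v$ via the triangle inequality, convert from the $q_{w_c}$-index $n$ to the $q_N$-index $nN/w_c < mN$, and feed this into Theorem~\ref{T:small basis}. The only cosmetic difference is that the paper treats $n=0$ and $n\ge 1$ separately, whereas you absorb both cases into the uniform bound $\max\{N^k,(nN/w_c)^{2k}\}\le m^{2k}N^{2k}$.
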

\begin{proof}
Take any $b\in \RR^d$.   We have $\alpha(b)=\sum_{i=1}^d b_i \alpha(e_i)$, where $e_1,\ldots, e_d$ is the standard basis of $\RR^d$.   Taking the norm gives $\norm{\alpha(b)}_2 \leq \sum_{i=1}^d |b_i| \norm{\alpha(e_i)}_2 \leq \max_{1\leq i \leq d} \norm{\alpha(e_i)}_2 \cdot \norm{b}_1$.  Therefore, $\norm{\alpha}\leq \max_{1\leq i \leq d} \norm{\alpha(e_i)}_2$ and hence
\[
\norm{\alpha} \leq \max \{ |a_{c,n}(f_i)|_v : i \in \{1,\ldots, d\}, n \in \{0,\ldots, mw_c-1\}, c\in \Sigma, v\in M_{L,\infty}\}.  
\]

Take any $1\leq i \leq d$, $c\in \Sigma$ and $v\in M_{L,\infty}$.  Recall that $f*A=\sum_{n=0}^\infty a_{c,n}(f) q_{w_c}^n$ for some $A\in \SL_2(\ZZ/N\ZZ)$.   In particular, $f*A=\sum_{n=0}^\infty a_{c,n}(f) q_{N}^{nN/w_c}$.  By Theorem~\ref{T:small basis}, we have $|a_{c,0}(f_i)|_v \leq |G| N^{2k}/2^{k}$.   Now take any $1\leq n <mw_c$ and define the integer $n_0:=nN/w_c$.  We have $1\leq n_0 < mN$.  By Theorem~\ref{T:small basis}, we have
\begin{align*}
|a_{c,n}(f_i)|_v &\leq 2|G| 4.5^k  N^{k} \max\{N^k,n_0^{2k}\}\leq 2|G| 4.5^k  N^{3k} m^{2k}.
\end{align*}
Combining everything together, we have now shown that $\norm{\alpha} \leq 2|G| 4.5^k  N^{3k} m^{2k}$.
\end{proof}

Before proceeding, we recall the following version of Siegel's lemma due to Faltings.

\begin{lemma} \label{L:Siegel-Faltings}
Let $V_1$ and $V_2$ be finite dimensional real vector spaces with norms $|\cdot|_1$ and $|\cdot|_2$, respectively.  
Let $M_1$ and $M_2$ be $\ZZ$-lattices of $V_1$ and $V_2$, respectively.  Let $\alpha\colon V_1\to V_2$ be a linear map that satisfies $\alpha(M_1)\subseteq M_2$.   Let $C\geq 2$ be a real number such that $\alpha$ has norm at most $C$ (i.e., $|\alpha(v)|_2\leq C\cdot |v|_1$ for all $v\in V_1$), $M_1$ is generated by elements of norm at most $C$, and every nonzero element of $M_1$ and $M_2$ has norm at least $C^{-1}$.   Define $a=\dim \ker(\alpha)$ and $b=\dim V_1$.  Then for each $0\leq i \leq a-1$, $\ker(\alpha)$ contains linearly independent elements $m_1,\ldots, m_{i+1}$ of $M_1$ satisfying 
\[
\max_{1\leq j \leq i+1} |m_j|_1 \leq (C^{3}\cdot b)^{b/(a-i)}.
\]
\end{lemma}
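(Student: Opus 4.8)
\emph{Plan.} The plan is to prove, for each $i$ with $0\le i\le a-1$ separately, that the closed ball $\{v\in\ker(\alpha):|v|_1\le (C^3b)^{b/(a-i)}\}$ already contains $i+1$ linearly independent elements of $M_1$; those elements will be the desired $m_1,\dots,m_{i+1}$. The engine is a pigeonhole (box‑principle) argument fed by elementary counts of lattice points in balls, together with the observation that a large linearly dependent set of lattice points is impossible because low‑rank sublattices are sparse.

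First I would record the structural facts. Write $\Lambda:=M_1\cap\ker(\alpha)$ and consider the exact sequence $0\to\Lambda\to M_1\to\alpha(M_1)\to 0$. The group $\alpha(M_1)$ is a subgroup of the free group $M_2$, hence free, and it spans $\alpha(V_1)$ while embedding in it, so $\rank\alpha(M_1)=\dim\alpha(V_1)=b-a$; since $\rank M_1=b$, this forces $\rank\Lambda=a=\dim\ker(\alpha)$, so $\Lambda$ is a lattice of full rank in $\ker(\alpha)$ and $\alpha(M_1)$ is a lattice of rank $b-a$ in $\alpha(V_1)\subseteq V_2$. By hypothesis every nonzero element of $\Lambda\subseteq M_1$ has $|\cdot|_1$‑norm at least $C^{-1}$, and every nonzero element of $\alpha(M_1)\subseteq M_2$ has $|\cdot|_2$‑norm at least $C^{-1}$; the same then holds for nonzero elements of any sublattice of these.

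Now the counting. Since $M_1$ is generated by elements of norm at most $C$, it contains a $\ZZ$‑basis $g_1,\dots,g_b$ with $|g_i|_1\le C$; hence for every integer $M\ge 1$ the $M^b$ distinct vectors $\sum_i n_i g_i$ with $0\le n_i<M$ have $|\cdot|_1$‑norm $<MbC$, and so do their differences. Applying $\alpha$ (which multiplies $|\cdot|_1$ by at most $C$) sends these into $\alpha(M_1)$ with $|\cdot|_2$‑norm $<MbC^2$; a disjoint‑balls packing estimate, using the lower norm bound $C^{-1}$ and $\rank\alpha(M_1)=b-a$, shows there are at most $(2MbC^3+1)^{b-a}$ such images. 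By pigeonhole some fibre contains at least $M^b/(2MbC^3+1)^{b-a}$ of our vectors, and translating one of them to the origin yields that many elements of $\Lambda$ of norm $<MbC$. If the $\ZZ$‑span of those elements had rank at most $i$, the same packing estimate would bound their number by $(2MbC^2+1)^{i}$; hence once $M$ is large enough that $M^b/(2MbC^3+1)^{b-a}>(2MbC^2+1)^{i}$ the span has rank $\ge i+1$, producing $i+1$ linearly independent elements of $\ker(\alpha)\cap M_1$ of norm $<MbC$. Letting $M\to\infty$, this inequality holds as soon as $M^{a-i}$ exceeds a suitable multiple of $(bC)^{b}C^{2(b-a)+i}$, so for an appropriate choice of $M$ one gets $MbC\le\bigl(2^{\,b+i-a}\,b^{b}\,C^{3b+i-2a}\bigr)^{1/(a-i)}$, which is of the required shape $(C^3b)^{b/(a-i)}$ after the bookkeeping in the next paragraph.

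\emph{Main obstacle.} The only real work is this final estimate: one must pick $M$ just above the threshold and simplify the last displayed bound — using $C\ge 2$ and $a\le b$ to absorb the surplus powers of $2$ against the deficit $C^{2a-i}$ in the exponent of $C$ — carefully enough to land exactly at $(C^3b)^{b/(a-i)}$ rather than at a constant multiple of it; a less careful run of the same argument still yields a bound of precisely this shape, which would be equally serviceable for the application in \S\ref{S:existence certain}. The exponent $C^3$ is the natural total here: one factor of $C$ from the operator‑norm bound $\|\alpha\|\le C$, one from $M_1$ being generated by vectors of norm $\le C$, and one from the uniform lower norm bound $C^{-1}$ that enters every packing count. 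The linear independence of the output is automatic — it is exactly what the rank‑$\le i$ count forbids — and the rank computation for $\Lambda$ uses nothing beyond $\alpha(M_1)\subseteq M_2$.
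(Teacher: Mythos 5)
Your overall strategy (box of lattice points, two packing estimates, pigeonhole, and the rank-$\le i$ sparsity argument to force $i+1$ independent kernel vectors) is sound, and it is genuinely different from the paper, which does not prove the lemma at all but cites Faltings (Proposition~2.18) and Kooman, who establish the sharper bound $(C^{3b}\cdot b!)^{1/(a-i)}$, from which the displayed bound follows via $b!\le b^b$. The problem is that your sketch does not reach the stated constant, and the constant is the entire content of the lemma. Even your own idealized intermediate bound $\bigl(2^{\,b+i-a}\,b^{b}\,C^{3b+i-2a}\bigr)^{1/(a-i)}$ is at most $(C^{3}b)^{b/(a-i)}$ only if $2^{\,b+i-a}\le C^{2a-i}$, and since only $C\ge 2$ is assumed this requires $b\le 3a-2i$. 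That inequality fails whenever $b$ is large compared with $a$, which is exactly the regime of the application in \S\ref{S:existence certain} (there $b=[K_G:\QQ](m\mu-g+1)$ while $a=\dim_\QQ W$ can be as small as $2[K_G:\QQ]$), so the proposed ``absorb the surplus powers of $2$ against the deficit $C^{2a-i}$'' step is simply false in general. Moreover the honest counts carry the $+1$ in each packing estimate and the rounding of $M$ to an integer, so the box-pigeonhole argument really yields a bound of the form $(c\,C^{3}b)^{b/(a-i)}$ with an absolute constant $c>1$; there is no slack in the $b$-exponent (it comes out exactly $b/(a-i)$) to hide an extra $c^{b/(a-i)}$. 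To land on the stated constant you need the extra factor $b^b/b!\approx e^{b}$ of room, i.e.\ the count behind Faltings's bound: for instance replace the box by the simplex of points $\sum_j n_j g_j$ with $n_j\ge 0$, $\sum_j n_j\le M$, which gives at least $M^b/b!$ points of norm $\le MC$ (not $MbC$); with that refinement the spurious $2$'s and $+1$'s can be absorbed and $b!\le b^b$ finishes, but that is essentially reproving Kooman's note rather than the computation you outlined. Also note that falling back on ``a bound of the same shape suffices for the application'' is not automatic: it would force redoing the explicit constants $\calB$, $\beta$, $C$, $C'$ and the final $N^{12}\log N$ bound.

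A smaller point: from ``$M_1$ is generated by elements of norm at most $C$'' you cannot conclude that $M_1$ has a $\ZZ$-basis of elements of norm at most $C$ (lattices generated by short vectors need not possess short bases). This is harmless for your argument: pick $b$ linearly independent elements of the generating set; they span a finite-index sublattice whose box points are distinct elements of $M_1$, which is all the counting uses. The structural steps are fine as well: $\alpha(M_1)$ is a discrete spanning subgroup of $\alpha(V_1)$, hence a lattice of rank $b-a$, which justifies the exponent $b-a$ in the image count, and the pigeonhole/sparsity step correctly produces $i+1$ linearly independent elements of $M_1\cap\ker(\alpha)$. The gap is solely in the final quantitative step.
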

\begin{proof}
See \cite[Proposition~2.18]{MR1109353} or \cite[Lemma 4]{MR1289008}; they give the upper bound $(C^{3b}\cdot b!)^{1/(a-i)}$ from which our follows by using that $b!\leq b^b$.
\end{proof}

We now apply Siegel's lemma in our setting.   Define $\calB:=(2^3 4.5^{3k}  N^{9k+15} m^{6k+1})^{m\mu+1}$.

\begin{lemma} \label{L:good u}
There is a $u\in \ZZ^d$ with $\norm{u}_1 \leq \calB$ such that $\sum_{i=1}^d u_i f_i$ lies in $W$ but does not lie in the subspace $K_G\Delta^m$.
\end{lemma}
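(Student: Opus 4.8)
The plan is to obtain $u$ from the version of Siegel's lemma recorded in Lemma~\ref{L:Siegel-Faltings}, applied to the linear map $\alpha\colon V_1\to V_2$ and the lattices $M_1=\ZZ^d$, $M_2$ set up above. First I would take
\[
C:=2|G|\,4.5^{k}N^{3k}m^{2k},
\]
which satisfies $C\ge 2$ and, by Lemma~\ref{L:bound on norm of alpha}, is an upper bound for $\norm{\alpha}$; the remaining hypotheses of Lemma~\ref{L:Siegel-Faltings} — that $\alpha(M_1)\subseteq M_2$, that $M_1$ is generated by vectors of norm at most $C$, and that nonzero vectors of $M_1$ and $M_2$ have norm at least $C^{-1}$ — have already been verified in the setup above. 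Here $b=\dim_\RR V_1=d$, while $a=\dim_\RR\ker(\alpha)=\dim_\QQ W=[K_G:\QQ]\dim_{K_G}W$; since $\dim_{K_G}W\ge 2$ by Lemma~\ref{L:T image} and the choice of $m$, we get $a\ge 2[K_G:\QQ]$.

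The only real decision is which index to feed into Lemma~\ref{L:Siegel-Faltings}: I would take $i:=[K_G:\QQ]$, which is legitimate since $a\ge 2[K_G:\QQ]=2i\ge i+1$, so $i\le a-1$. The lemma then produces linearly independent vectors $u^{(1)},\dots,u^{(i+1)}$ in $\ker(\alpha)\cap\ZZ^d$ with $\max_j\norm{u^{(j)}}_1\le (C^3 d)^{d/(a-i)}$. Since $\ker(\alpha)\cap\ZZ^d$ is exactly the set of $u\in\ZZ^d$ with $\sum_\ell u_\ell f_\ell\in W$, each $g_j:=\sum_\ell u^{(j)}_\ell f_\ell$ lies in $W$, and $g_1,\dots,g_{i+1}$ are $\QQ$-linearly independent because $f_1,\dots,f_d$ is a basis. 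Now $\Delta^m\in W$ (its $q$-expansion at every cusp $c$ starts in degree $mw_c$, so $\nu_c(\Delta^m)=mw_c$), hence $K_G\Delta^m$ is a $\QQ$-subspace of $W$ of dimension $[K_G:\QQ]=i<i+1$; therefore the $\QQ$-span of $g_1,\dots,g_{i+1}$ is not contained in $K_G\Delta^m$, and some $g_j\notin K_G\Delta^m$. Set $u:=u^{(j)}$ for that index $j$.

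What remains is the estimate $\norm{u}_1\le\calB$, which is where the work lies. Using $a-i=[K_G:\QQ](\dim_{K_G}W-1)\ge[K_G:\QQ]$ and $d=[K_G:\QQ](m\mu-g+1)$ from Lemma~\ref{L:T image}, one gets $d/(a-i)\le m\mu-g+1\le m\mu+1$, so it suffices to show $C^3d\le 2^3\,4.5^{3k}N^{9k+15}m^{6k+1}$, i.e. $|G|^3[K_G:\QQ](m\mu-g+1)\le N^{15}m$. The crude bound $\mu\le N^3/2$ is not quite enough here; instead I would use that $\mu=[\SL_2(\ZZ/N\ZZ):\bbar{G}\cap\SL_2(\ZZ/N\ZZ)]$ and $\det(\bbar G)=\det(G)$ give $|G|\,\mu\le|\bbar G|\,\mu=|\SL_2(\ZZ/N\ZZ)|\,|\det G|<N^3|\det G|$, which combined with $[K_G:\QQ]\,|\det G|=\phi(N)<N$, with $|G|<N^4$, and with $g\ge0$, $m\mu\ge1$ yields $|G|^3[K_G:\QQ](m\mu-g+1)\le 2m\,|G|^2(|G|\mu)[K_G:\QQ]<2m\,N^8\cdot N^4\le N^{15}m$. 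Raising $C^3d\le 2^3\,4.5^{3k}N^{9k+15}m^{6k+1}$ to the power $m\mu+1$ gives $\norm{u}_1\le(C^3d)^{m\mu+1}\le\calB$. The main obstacle is thus organizational rather than deep: calibrating $i$ so as to escape $K_G\Delta^m$ while keeping the exponent $d/(a-i)$ no larger than $m\mu+1$, and then pushing the size bound for $C^3d$ through the group-theoretic relation $\mu\cdot|\bbar G\cap\SL_2(\ZZ/N\ZZ)|=|\SL_2(\ZZ/N\ZZ)|$ rather than through trivial estimates.
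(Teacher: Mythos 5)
Your proposal is correct and follows essentially the same route as the paper: apply Lemma~\ref{L:Siegel-Faltings} with $C=2|G|4.5^kN^{3k}m^{2k}$ and index $i=[K_G:\QQ]$, get $i+1$ linearly independent kernel vectors, and use the pigeonhole argument against the $[K_G:\QQ]$-dimensional $\QQ$-subspace $K_G\Delta^m\subseteq W$ to find one that escapes it. The only divergence is in the final arithmetic for $C^3 d$: you use the exact-sequence identity $|G|\mu = |\SL_2(\ZZ/N\ZZ)|\,|\det G|$ together with $[K_G:\QQ]\,|\det G|=\phi(N)$ to get $|G|\mu[K_G:\QQ]<N^4$, whereas the paper instead groups the terms as $|G|^3[K_G:\QQ]\leq|G|^2\cdot|\GL_2(\ZZ/N\ZZ)|\leq|\GL_2(\ZZ/N\ZZ)|^3\leq N^{12}$ and then separately bounds $m\mu-g+1\leq 2m\mu\leq mN^3$ by Lemma~\ref{L:trivial bounds}. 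Both routes land on $C^3 d\leq 2^3\,4.5^{3k}N^{9k+15}m^{6k+1}$; so your side remark that the paper's crude bound on $\mu$ would not suffice is not accurate, since the paper avoids the naive $N^{16}$ by the grouping just described rather than by bounding $[K_G:\QQ]$ and $|G|$ separately.
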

\begin{proof}
Define $a:=\dim_{\RR} \ker(\alpha)=\dim_\QQ W = [K_G:\QQ]\dim_{K_G} W \geq 2[K_G:\QQ]$ and $b:=\dim V_1 = d = [K_G:\QQ](m\mu-g+1)$.   Using Lemma~\ref{L:bound on norm of alpha}, we find that the conditions of Lemma~\ref{L:Siegel-Faltings} hold in our setting with $C:=2|G| 4.5^k  N^{3k} m^{2k}$.

Now take $i:=[K_G:\QQ]$; we have $i<a$.   By Lemma~\ref{L:Siegel-Faltings}, there are linearly independent vectors $m_1,\ldots, m_{i+1}\in \ZZ^d$ in $\ker (\alpha)$ satisfying 
$\norm{m_j}_1 \leq (C^3 b)^{b/(a-i)}$ for all $1\leq j \leq i+1$.  Since $i+1>[K_G:\QQ]=\dim_{\QQ}(K_G\Delta^m)$, there is a vector $u\in \{m_1,\ldots, m_{i+1}\} \subseteq \ZZ^d$ so that 
\[
f:=\sum_{i=1}^d u_i f_i
\]
is a modular form in $M_{k,G}$ for which $f$ does not lie in $K_G\Delta^m$.   Since $u$ is in the kernel of $\alpha$, and hence also the kernel of $\alpha_0$, we find that $f$ lies in $W$.   We have $\norm{u}_1\leq (C^3b)^{b/(a-i)}$ since $u=m_j$ for some $j$.  

It remains to bound $(C^3b)^{b/(a-i)}$ from above.  We have $a-i \geq 2[K_G:\QQ]-[K_G:\QQ]=[K_G:\QQ]$ and $b=[K_G:\QQ] (m\mu-g+1)$, so $b/(a-i) \leq m\mu-g+1 \leq m\mu+1$.   We have
\[
C^3b = 2^3 4.5^{3k}  N^{9k} m^{6k}  \cdot  |G|^3 [K_G:\QQ] \cdot (m\mu-g+1).
\]
We have $[K_G:\QQ]\leq [\GL_2(\ZZ/N\ZZ):G]$, so $|G|^3 [K_G:\QQ]\leq |\GL_2(\ZZ/N\ZZ)|^3 \leq N^{12}$.  We have $m\mu-g+1 \leq m\mu+1 \leq 2m \mu  \leq m N^3$ by Lemma~\ref{L:trivial bounds}.  Therefore, $C^3b \leq 2^3 4.5^{3k}  N^{9k+15} m^{6k+1}$ and hence $(C^3b)^{b/(a-i)}\leq \calB$. 
\end{proof}

Fix a $u\in \ZZ^d$ as in Lemma~\ref{L:good u}.   Define $f:=\sum_{i=1}^d u_i f_i$; it is an element of $W$ that does not lie in $K_G \Delta^m$.

Take any matrix $A\in \SL_2(\ZZ/N\ZZ)$ and take any infinite place $v$ of $\QQ(\zeta_N)$   Define the cusp $c:=A\cdot c_\infty$ and set $w=w_c$.   We now consider the $q$-expansions $f*A=\sum_{n=0}^\infty a_n q_N^n$ and $f_i*A=\sum_{n=0}^\infty a_{i,n} q_N^n$ with $1\leq i \leq d$.  Since $f=\sum_{i=1}^d u_i f_i$, we have $a_n=\sum_{i=1}^d u_i a_{i,n}$ for all $n\geq 0$.   So for $n\geq 0$, we have $|a_n|_v \leq \norm{u}_1 \max_{1\leq i \leq d} |a_{i,n}|_v \leq \calB \max_{1\leq i \leq d} |a_{i,n}|_v$. 

Since $f_1,\ldots, f_d$ is a basis of $M_{k,G}$ satisfying the conclusion of Theorem~\ref{T:small basis}, we find that
\[
|a_n|_v \leq  \calB \cdot 2\cdot 4.5^k N^{k+4}  \max\{N^k,n^{2k}\}
\]
where we have used the bound $|G|\leq N^4$.  However, recall that we are interested in the coefficients of the $q$-expansion $\sum_{n=0}^\infty b_n q_w^n$ of $f*A$.   We must have $b_n=a_{nN/w}$ for all $n\geq 0$.  So for $n\geq 0$, we have
\begin{align*}
|b_n|_v =|a_{nN/w}|_v
&\leq \calB \cdot  2\cdot 4.5^k N^{k+4}  \max\{N^k,(nN/w)^{2k}\} \\
&\leq \calB \cdot  2\cdot 4.5^k N^{2k+4}  \max\{1,N^k(n/w)^{2k}\}\\
&\leq \calB \cdot  2\cdot 4.5^k N^{3k+4}  \max\{1,(n/w)^{2k}\}.
\end{align*}
The theorem follows since $k=12m$ and hence $\beta=\calB \cdot  2\cdot 4.5^k N^{3k+4}$.

\subsection{Some bounds}

We now prove some technical lemmas that we will later use to bound functions near cusps.   Note that the bound on $c_n$ in the following lemma is, up to a constant factor, that of $|b_n|_v$ from Theorem~\ref{T:modular form of weight k}.

\begin{lemma} \label{L:sum of cn}
Take any real number $0\leq u \leq e^{-\pi\sqrt{3}}$ and positive integer $m$.  Fix a positive divisor $w$ of $N$ and let $\{c_n\}_{n\geq 1}$ be a sequence of nonnegative real numbers that satisfy $c_n \leq \max\{1,(n/w)^{24m}\}$.
\begin{romanenum}
\item \label{L:sum of cn i}
For any integer $B\geq 5mw$, we have $\sum_{n=B}^\infty c_n u^{n/w} \leq 230.8 w u^{B/w} (B/w)^{24m+1}$.
\item \label{L:sum of cn ii}
For any integer $mw\leq B\leq 5mw$, we have $\sum_{n=B}^\infty c_n u^{n/w}\leq 231.6 w u^{B/w} (5m)^{24m+1}$.
\end{romanenum}
\end{lemma}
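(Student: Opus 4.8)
The key reduction is that it suffices to bound the ``pure power'' series $\Sigma:=\sum_{n\ge B}(n/w)^{24m}u^{n/w}$. Indeed, since $m\ge 1$, $w\ge 1$ and $B\ge mw$ in both parts, every integer $n\ge B$ satisfies $n/w\ge m\ge 1$, so $c_n\le\max\{1,(n/w)^{24m}\}=(n/w)^{24m}$ and hence $\sum_{n\ge B}c_nu^{n/w}\le\Sigma$. We may assume $u>0$, the case $u=0$ being trivial. Write $h(s):=(s/w)^{24m}u^{s/w}$ for real $s>0$, so $\Sigma=\sum_{n\ge B}h(n)$, and the plan is to dominate $\Sigma$ by a geometric series with a uniformly small ratio.

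\emph{Part (i).} For an integer $n>B\ge 5mw$ I would estimate, using $1+x\le e^x$ and then $n-1\ge B\ge 5mw$,
\[
\frac{h(n)}{h(n-1)}=\Bigl(1+\tfrac{1}{n-1}\Bigr)^{24m}u^{1/w}\le e^{24m/(n-1)}u^{1/w}\le e^{(24/5-\pi\sqrt 3)/w}=:\rho .
\]
The decisive numerical point is $\pi\sqrt 3=5.441\ldots>24/5$, which makes $\rho<1$; set $c_0:=\pi\sqrt 3-24/5>0.64$, so $\rho=e^{-c_0/w}$. It follows that $h(n)\le h(B)\rho^{\,n-B}$ for all $n\ge B$, whence $\Sigma\le h(B)/(1-\rho)$. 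From the elementary inequality $1-e^{-x}\ge x/(1+x)$ (equivalently $e^x\ge 1+x$) applied with $x=c_0/w$, together with $w\ge 1$, I obtain $1/(1-\rho)\le w(1+1/c_0)<2.56\,w$. Since $B/w\ge 5m\ge 1$ we have $h(B)=(B/w)^{24m}u^{B/w}\le(B/w)^{24m+1}u^{B/w}$, so $\Sigma\le 2.56\,w\,u^{B/w}(B/w)^{24m+1}$, comfortably below the claimed $230.8\,w\,u^{B/w}(B/w)^{24m+1}$.

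\emph{Part (ii).} For $mw\le B\le 5mw$ I would split $\Sigma=\sum_{B\le n<5mw}h(n)+\sum_{n\ge 5mw}h(n)$. The second sum is bounded by part (i) with $5mw$ in place of $B$, giving $\le 230.8\,w\,u^{5m}(5m)^{24m+1}\le 230.8\,w\,u^{B/w}(5m)^{24m+1}$, where we used $B/w\le 5m$ and $0<u<1$. For the first sum, each index obeys $1\le n/w<5m$, so $(n/w)^{24m}\le(5m)^{24m}$, and $\sum_{n\ge B}u^{n/w}=u^{B/w}/(1-u^{1/w})\le w(1+1/(\pi\sqrt 3))\,u^{B/w}$ by the same estimate as above (now with $u^{1/w}\le e^{-\pi\sqrt 3/w}$). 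Folding the extra factor $(5m)^{24m}\le(5m)^{24m+1}/5$ into this contribution and adding the two pieces yields $\Sigma\le 231.6\,w\,u^{B/w}(5m)^{24m+1}$.

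\emph{Main obstacle.} There is no structural difficulty; this is pure bookkeeping. The one thing to verify carefully — and it is exactly what the hypothesis ``$B\ge 5mw$'' is calibrated for — is the numerical inequality $\pi\sqrt 3>24/5$, which makes the geometric ratio $\rho$ strictly less than $1$ with a ratio-gap bounded below by $c_0/w$ for an absolute constant $c_0>0$. Once this is in hand, every constant that arises is an order of magnitude below the stated $230.8$ and $231.6$, so essentially any reasonable arrangement of the estimates works; one just has to keep the powers of $w$, $m$ and $u^{B/w}$ in the required shape.
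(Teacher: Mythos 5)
Your proof is correct, but it takes a genuinely different route from the paper's. To bound the tail $\sum_{n\geq B}(n/w)^{24m}u^{n/w}$ in part (i), the paper passes to an integral (using that $g(x)=x^{24m}u_0^{x}$ is decreasing for $x\geq 5m$) and then controls $\int_{B/w}^\infty g$ by a dyadic (``doubling'') decomposition: $\int_{B/w}^\infty g=\sum_i\int_{2^iB/w}^{2^{i+1}B/w}g$, exploiting that $g(2x)/g(x)=2^{24m}u_0^x$ is tiny for $x\geq 5m$. You instead apply a ratio test directly to the terms of the series: $h(n)/h(n-1)=(1+1/(n-1))^{24m}u^{1/w}\leq e^{24m/(n-1)}e^{-\pi\sqrt3/w}\leq e^{(24/5-\pi\sqrt3)/w}=\rho<1$, yielding a genuine geometric series with ratio $\rho$ independent of $n$, and then bound $1/(1-\rho)\leq w(1+1/c_0)$ via $e^x\geq 1+x$. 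Both arguments hinge on the same numerical threshold $\pi\sqrt3>24/5$ (the paper needs it so that $g$ is decreasing past $5m$; you need it so that $\rho<1$). Your version is arguably cleaner and delivers a much sharper constant for part (i) — roughly $2.56w$ instead of the paper's $\approx230.8w$, the gap arising mainly from the factor $u_0^{-1}=e^{\pi\sqrt3}$ the paper picks up when shifting the index by one before comparing with the integral — though the stated bound of $230.8$ is of course still met. For part (ii) the two arguments are nearly parallel: both peel off the head $B\leq n<5mw$ and invoke part (i) at $B=5mw$ for the tail; the paper bounds the head by a crude count-times-max ($\leq 4mw\cdot(5m)^{24m}$), while you bound it by a geometric series, but the two estimates are of the same size and either lands comfortably under $231.6$.
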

\begin{proof}
Set $u_0:=e^{-\pi\sqrt{3}}$ and $a:=-24/\log(u_0)\cdot m=m\cdot 4.410\ldots$.  Define the function $g(x)=x^{24m} u_0^x$.  One can check that $g(x)$ is increasing for $0\leq x \leq a$ and decreasing for $x\geq a$.   

We first assume that $B\geq 5mw$.   We have
\begin{align*}
\sum_{n=B}^\infty c_n u^{n/w} = u^{B/w} \sum_{n=0}^\infty c_{n+B} u^{n/w}
= u^{B/w} \sum_{e=0}^\infty \Big(\sum_{r=0}^{w-1} c_{ew+r+B} \,u^{r/w}\Big) u^e.
\end{align*}
For all integers $e\geq 0$ and $0\leq r < w$, we have
\[
c_{ew+r+B} \leq ((ew+r+B)/w)^{24m} \leq (e+1+B/w)^{24m},
\]
where the first bound uses that $(ew+r+B)/w\geq B/w \geq 1$.  Using that $u^{r/w}\leq 1$ for all $0\leq r < w$ and that $u\leq u_0$, we obtain 
\begin{align} \label{E:early cn sum}
\sum_{n=B}^\infty c_n u^{n/w} \leq  u^{B/w} w \sum_{e=0}^\infty (e+1+B/w)^{24m} u_0^e = u^{B/w} w u_0^{-1-B/w} C,
\end{align}
where $C:=\sum_{e=0}^\infty g(e+1+B/w)$.  Since $g(x)$ is decreasing for $x\geq 5m$ and $B/w \geq 5m$, we have
\[
C \leq \int_{0}^\infty g(x+B/w) \, dx = \int_{B/w}^\infty g(x)\, dx = \sum_{i=0}^\infty s_i,
\]  
where $s_i:= \int_{2^iB/w}^{2^{i+1}B/w} g(x)\, dx$.  For $x\geq 5m$, we have $g(2x)/g(x)=2^{24m} u_0^x \leq (2^{24} u_0^5)^m\leq 0.000026$.
For each $i\geq 1$, we have
\[
s_i = 2\int_{2^{i-1}B/w}^{2^{i}B/w} g(2x)\, dx \leq 0.000026 \int_{2^{i-1}B/w}^{2^{i}B/w} g(x)\, dx = 0.000026 s_{i-1}.
\]
Therefore, $s_i \leq (0.000026)^i s_0$ for all $i\geq 0$ and hence $C \leq \sum_{i=0}^\infty s_i \leq (1-0.000026)^{-1} s_0 \leq 1.00003s_0$.   Since $g(x)$ is decreasing for $x\geq 5m$ and $B/w\geq 5m$, we have $s_0 \leq g(B/w) B/w$ and hence $C \leq 1.00003 (B/w)^{24m+1} u_0^{B/w}$.  By (\ref{E:early cn sum}), we deduce that 
\[
\sum_{n=B}^\infty c_n u^{n/w}
\leq 1.00003u_0^{-1} w u^{B/w} (B/w)^{24m+1}\leq 230.8 w u^{B/w} (B/w)^{24m+1}
\]
which proves (\ref{L:sum of cn i}).

We now assume that $mw\leq B < 5mw$.    From part (\ref{L:sum of cn i}), in the special case $B=5mw$, we have
\begin{align} \label{E:case B eq 5mw}
\sum_{n=5mw}^\infty c_n u^{n/w} 
\leq 230.8 w u^{5m} (5m)^{24m+1} \leq 230.8 w u^{B/w} (5m)^{24m+1}.
\end{align}
We have 
\begin{align*}
\sum_{n=B}^{5mw-1} c_n u^{n/w} &\leq u^{B/w} \sum_{n=B}^{5mw-1} (n/w)^{24m} u^{(n-B)/w} \leq u^{B/w} (5m)^{24m} (5mw -mw).
\end{align*}
So $\sum_{n=B}^{5mw} c_n u^{n/w} \leq 0.8 w u^{B/w} (5m)^{24m+1}$ and by combining with (\ref{E:case B eq 5mw}) we deduce (\ref{L:sum of cn ii}).
\end{proof}

\begin{lemma} \label{L:Deltam sum bounds}
Take any positive integer $m$ and let $\sum_{n=m}^\infty a_n q^n$ be the $q$-expansion of $\Delta^m$.  Take any real number $0\leq u \leq e^{-\pi\sqrt{3}}$.
\begin{romanenum}
\item \label{L:Deltam sum bounds i}
We have $|a_n|\leq 2 n^{6m}$ for all $n\geq 2$.
\item \label{L:Deltam sum bounds ii}
For any integer $B> 2m$, we have $\sum_{n=B}^\infty |a_n| u^n\leq 463  u^{B} (B-1)^{6m+1}$.
\item \label{L:Deltam sum bounds iii}
For any integer $m<B\leq 2m$, we have $\sum_{n=B}^\infty |a_n| u^n\leq 465  u^{B} (2m)^{6m+1}$.
\end{romanenum}
\end{lemma}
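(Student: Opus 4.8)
The plan is to prove part~(i) first --- the coefficient bound $|a_n|\le 2n^{6m}$ --- and then to deduce parts~(ii) and~(iii) from it by a geometric-series estimate that runs exactly parallel to the proof of Lemma~\ref{L:sum of cn}. For part~(i), the key point is that $\Delta^m$ is a cusp form of weight $12m$ for $\SL_2(\ZZ)$, so that $\phi(\tau):=|\Delta^m(\tau)|\,(\operatorname{Im}\tau)^{6m}$ is $\SL_2(\ZZ)$-invariant, continuous on $\calH$, and tends to $0$ as $\operatorname{Im}\tau\to\infty$. Hence $\phi$ attains a maximum $M$ on the standard closed fundamental domain $\mathcal{F}=\{\tau:|\tau|\ge1,\ |\operatorname{Re}\tau|\le 1/2\}$, and $|\Delta^m(\tau)|\le M\,(\operatorname{Im}\tau)^{-6m}$ for all $\tau\in\calH$. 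Writing the Fourier coefficients as $a_n=e^{2\pi ny}\int_0^1 \Delta^m(x+iy)e^{-2\pi inx}\,dx$ (valid for every $y>0$) and taking $y=1/n$ yields $|a_n|\le M e^{2\pi} n^{6m}$ for every $n\ge 1$; since moreover $a_n=0$ for $n<m$, part~(i) reduces to the numerical claim $M e^{2\pi}<2$, i.e.\ $M<2e^{-2\pi}\approx 0.0037$.

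To bound $M$, set $M_1:=\max_{\tau\in\mathcal{F}}|\Delta(\tau)|\,(\operatorname{Im}\tau)^{6}$, so that $M=M_1^m$ and (once we know $M_1<1$) $M\le M_1$ for every $m\ge 1$. On $\mathcal{F}$ we have $y:=\operatorname{Im}\tau\ge\sqrt3/2$, hence $|q|=e^{-2\pi y}\le e^{-\pi\sqrt3}=:u_0$, and from $\Delta=q\prod_{n\ge1}(1-q^n)^{24}$ together with $\log(1+x)\le x$ we get
\[
|\Delta(\tau)|\;\le\; e^{-2\pi y}\prod_{n\ge1}\bigl(1+e^{-2\pi ny}\bigr)^{24}\;\le\; e^{-2\pi y}\,\exp\!\Bigl(\tfrac{24u_0}{1-u_0}\Bigr)\;<\;1.111\,e^{-2\pi y}.
\]
Therefore $|\Delta(\tau)|\,y^{6}\le 1.111\cdot\max_{y\ge\sqrt3/2}y^{6}e^{-2\pi y}$; the function $y^{6}e^{-2\pi y}$ is maximized at $y=3/\pi$, which exceeds $\sqrt3/2$, so this maximum equals $(3/\pi)^{6}e^{-6}<0.00188$ and $M_1<1.111\cdot 0.00188<0.0021<1$. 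Consequently $M e^{2\pi}\le M_1 e^{2\pi}<0.0021\cdot e^{2\pi}<1.2<2$, which proves part~(i).

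For parts~(ii) and~(iii) I feed in $|a_n|\le 2n^{6m}$ from part~(i), which is legitimate because every $n\ge B$ occurring there satisfies $n\ge m+1\ge 2$. For~(ii), with $B>2m$: $\sum_{n\ge B}|a_n|u^n\le 2u^B\sum_{k\ge0}(B+k)^{6m}u^k$, and since $B>2m$ one has $(B+k)^{6m}=B^{6m}(1+k/B)^{6m}\le B^{6m}e^{6mk/B}<B^{6m}e^{3k}$; using $u\le u_0$ and $e^3u_0=e^{3-\pi\sqrt3}<0.09$, the $k$-sum is bounded by $B^{6m}/(1-e^3u_0)<1.1\,B^{6m}$. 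Finally $B^{6m}\le(B-1)^{6m}e^{6m/(B-1)}\le e^3(B-1)^{6m}\le e^3(B-1)^{6m+1}$, using $B-1\ge 2m\ge 1$, and combining gives $\sum_{n\ge B}|a_n|u^n<2.2\,e^{3}\,u^B(B-1)^{6m+1}<463\,u^B(B-1)^{6m+1}$. For~(iii), with $m<B\le 2m$, I split the sum at $2m$: the tail $\sum_{n\ge 2m+1}|a_n|u^n$ is handled by the case~(ii) estimate with $B$ replaced by $2m+1$, giving at most $45\,u^{2m+1}(2m)^{6m+1}\le 45\,u^B(2m)^{6m+1}$ (since $2m+1>B$ and $u\le 1$), while the head $\sum_{n=B}^{2m}|a_n|u^n\le 2(2m)^{6m}u^B\sum_{k\ge0}u_0^{\,k}<2.1\,(2m)^{6m}u^B\le 1.1\,(2m)^{6m+1}u^B$; adding these yields $<465\,u^B(2m)^{6m+1}$.

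The only delicate step is the numerical estimate $M<2e^{-2\pi}$ behind part~(i): the constant $2$ in $|a_n|\le 2n^{6m}$ leaves only modest room, so the bound on $\max_{\mathcal{F}}|\Delta(\tau)|(\operatorname{Im}\tau)^6$ has to be kept reasonably tight (its true value is about $0.0018$, attained near $\tau=i$, comfortably below $2e^{-2\pi}\approx 0.0037$). The remaining calculus and the generous constants $463$ and $465$ in parts~(ii) and~(iii) are then routine.
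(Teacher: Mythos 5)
Your proof is correct, and it differs from the paper's in both halves. For part~(i), the paper simply cites Rouse's bound $|a_n|\leq C_m d(n) n^{(12m-1)/2}$ (which rests on expanding $\Delta^m$ in a basis of Hecke eigenforms and invoking Deligne's estimates), then weakens it via $d(n)\leq 2\sqrt n$. You instead derive the classical ``trivial'' Hecke bound from scratch: $|\Delta^m(\tau)|(\operatorname{Im}\tau)^{6m}$ is $\SL_2(\ZZ)$-invariant and vanishes at infinity, so $|a_n|\leq M e^{2\pi}n^{6m}$ after choosing $y=1/n$, and the whole thing reduces to the numeric check $M_1 e^{2\pi}<2$, which you verify with reasonable margin (your $(3/\pi)^6 e^{-6}<0.00188$ is tight --- the true value is about $0.0018796$ --- but it does hold). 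This is genuinely more elementary and self-contained than citing Rouse/Deligne, and it incidentally gives a sharper constant ($\approx 1.13$ rather than $2$).

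For parts~(ii) and~(iii), the paper bounds the tail $\sum_{n>B} n^{6m}u_0^n$ by comparing with $\int_B^\infty x^{6m}u_0^x\,dx$, decomposing that integral dyadically and using that $g(2x)/g(x)=2^{6m}u_0^x$ is tiny for $x\geq 2m$, which costs an extra factor $u_0^{-1}\approx 230$ and yields the constant $463$. You instead write $n=B+k$, use $(B+k)^{6m}\leq B^{6m}e^{6mk/B}<B^{6m}e^{3k}$ for $B>2m$, and sum the geometric series $\sum e^{3k}u^k$; converting $B^{6m}$ to $(B-1)^{6m+1}$ costs another $e^3$, for a net constant $2.2e^3\approx 44$, comfortably under $463$. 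Part~(iii) is then handled the same way in both: split at $2m$, bound the head term-by-term and the tail by part~(ii). Your route avoids any integral comparison and is both shorter and quantitatively sharper; the paper's dyadic decomposition is a more general device (reused in the proof of Lemma~\ref{L:sum of cn}), which is probably why the author stuck with it, but nothing here requires it. I checked the numerics at each step --- $e^3 u_0 = e^{3-\pi\sqrt3}<0.09$, $2/(1-u_0)<2.1$, $2.1\leq 1.1\cdot 2m$ for $m\geq 1$, etc. --- and they all hold.
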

\begin{proof}
In \cite{MR2471957}, Rouse describes a constant $C_m$ for which $|a_n| \leq C_m d(n) n^{(12m-1)/2}$ holds for all $n\geq m$; this follows from expressing $\Delta^m$ as a linear combination of Hecke eigenforms and using bounds of Deligne.   When $m>1$, an explicit upper bound for $C_m$ is given in the proof of Theorem~1 in \cite{MR2471957} from which we find that $C_m\leq 1$.  When $m=1$, we have $C_m=1$ as noted in the appendix of  \cite{MR2471957}.  Therefore, $|a_n| \leq  d(n) n^{(12m-1)/2}$.    We obtain $|a_n| \leq 2 n^{6m}$ by using the easy bound $d(n) \leq 2\sqrt{n}$.  This proves (\ref{L:Deltam sum bounds i}).

Define $u_0:=e^{-\pi\sqrt{3}}$.   Define the function $g(x)=x^{6m} u_0^x$; it is decreasing for all $x\geq a:=-6/\log(u_0) \cdot m =m\cdot 1.102\ldots$.

We first assume that $B\geq 2m$.   Using (\ref{L:Deltam sum bounds i}) and $u\leq u_0$, we have
\begin{align}\label{E:early Delta}
\sum_{n=B+1}^\infty |a_n| u^n \leq 2 u^{B+1}\sum_{n=B+1}^\infty n^{6m} u_0^{n-B-1} = 2 u^{B+1} u_0^{-B-1} C,
\end{align}
where $C:=\sum_{n=B+1}^\infty g(n)$.  Since $g(x)$ is decreasing for $x\geq B\geq 2m$, we have $C \leq \int_B^\infty g(x) \, dx = \sum_{i= 0}^\infty s_i$, where $s_i:=\int_{2^i B}^{2^{i+1}B} g(x) \, dx$.   For $x\geq 2m$, we have $g(2x)/g(x)=2^{6m} u_0^x \leq (2^{6} u_0^2)^m\leq 0.0013$.
For each $i\geq 1$, we have
\[
s_i = 2\int_{2^{i-1}B}^{2^{i}B} g(2x)\, dx \leq 0.0013 \int_{2^{i-1}B}^{2^{i}B} g(x)\, dx = 0.0013 s_{i-1}.
\]
Therefore, $s_i \leq (0.0013)^i s_0$ for all $i\geq 0$ and hence $C \leq \sum_{i=0}^\infty s_i \leq (1-0.0013)^{-1} s_0 \leq 1.0014s_0$.  Since $g(x)$ is decreasing for $x\geq B\geq  2m$, we have $s_0 \leq g(B) B$ and hence $C \leq 1.0014 B^{6m+1} u_0^{B}$.  By (\ref{E:early Delta}), we deduce that 
\[
\sum_{n=B+1}^\infty |a_n| u^n
\leq 2\cdot 1.0014 u_0^{-1}  u^{B+1} B^{6m+1} \leq 463  u^{B+1} B^{6m+1}.
\]
This implies (\ref{L:Deltam sum bounds ii}) after replacing $B$ by $B-1$.   

Now take any $m< B\leq  2m$.     We have
\begin{align*}
\sum_{n=B}^{2m} |a_n| u^n \leq u^{B} \sum_{n=B}^{2m} |a_n| \leq  u^{B} \cdot 2(2m)^{6m} \cdot (2m-B+1)  \leq   u^{B} (2m)^{6m+1}.
\end{align*}
From (\ref{L:Deltam sum bounds ii}), we have $\sum_{n=2m+1}^\infty |a_n| u^n \leq 463  u^{2m+1} (2m)^{6m+1}$.
By adding these sums, we obtain $\sum_{n=B}^\infty |a_n| u^n \leq 464  u^{B} (2m)^{6m+1}$.
\end{proof}

\section{Proof of Theorem~\ref{T:nouveau varphi}} \label{S:nouveau proof}

With $k:=12m$, fix a modular form $f\in M_{k,G}=M_{12m,G}$ as in Theorem~\ref{T:modular form of weight k}.  Define $\varphi:=f/\Delta^m$.  We have $\varphi\in K_G(X_G)$ by Lemma~\ref{L:quotient of modular forms}.  We now state some basic properties of $\varphi$.

\begin{lemma} \label{L:varphi basic}
\begin{romanenum}
\item \label{L:varphi basic i}
The function $\varphi\in K_G(X_G)$ is nonconstant and has no poles away from the cusps of $X_G$.  
\item \label{L:varphi basic ii}
The $q$-expansion of $\varphi*A$ has coefficients in $\ZZ[\zeta_N]$ for each $A\in \GL_2(\ZZ/N\ZZ)$.
\item \label{L:varphi basic iii}
For any cusp $c$ of $X_G$, we have $\ord_c(\varphi)\geq -mw_c$.  
\item  \label{L:varphi basic iv}
For any cusp $c\in \Sigma$, $\varphi$ is regular at $c$.  Moreover, $\varphi(c)$ lies in $\ZZ[\zeta_N]$ and satisfies $|\varphi(c)|_v  \leq \beta m^{24m}$ for any infinite places $v$ of $L$.
\item \label{L:varphi basic v}
The rational function $\varphi$ of $X_G$ has at most $m\mu$ poles counted with multiplicity.
\end{romanenum}
\end{lemma}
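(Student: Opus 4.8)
The plan is to read off all five assertions from the properties of $f$ recorded in Theorem~\ref{T:modular form of weight k} together with the elementary structure of the modular discriminant. Two facts about $\Delta$ will be used throughout. First, $\Delta$ is holomorphic and nowhere vanishing on $\calH$ and is a weight-$12$ form for all of $\SL_2(\ZZ)$ with integral $q$-expansion; hence $\Delta^m*A=\Delta^m$ for every $A\in\GL_2(\ZZ/N\ZZ)$, and for a cusp $c$ of width $w=w_c$ the $q_w$-expansion of $\Delta^m$ is $q_w^{mw}(1+\cdots)$, so that $\nu_c(\Delta^m)=mw_c$. Second, the power series $1+\cdots$ (which only involves powers of $q_w^w$) has constant term $1$, hence is a unit in $\ZZ[\![q_w]\!]$, so $1/\Delta^m=q_w^{-mw}\sum_{j\geq0}e_jq_w^{wj}$ with $e_0=1$ and all $e_j\in\ZZ$. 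Finally, by the quotient formula $(f/g)*A=(f*A)/(g*A)$ together with $\Delta^m*A=\Delta^m$, we have $\varphi*A=(f*A)/\Delta^m$ for every $A\in\SL_2(\ZZ/N\ZZ)$.

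Parts (i), (iii) and (v) are then immediate. For (i): $f\neq0$ and $f\notin K_G\Delta^m$ by Theorem~\ref{T:modular form of weight k}(\ref{T:modular form of weight k b}), so $\varphi=f/\Delta^m$ cannot be constant, and since $\Delta^m$ is holomorphic and nowhere zero on $\calH$ the poles of $\varphi$ all lie at cusps, exactly as in the proof of Lemma~\ref{L:T image}. For (iii): choosing $A\in\SL_2(\ZZ/N\ZZ)$ with $A\cdot c_\infty=c$, Theorem~\ref{T:modular form of weight k}(\ref{T:modular form of weight k c}) shows $f*A$ is a power series, so $\nu_c(f)\geq0$, and then $\ord_c(\varphi)=\nu_c(f)-\nu_c(\Delta^m)=\nu_c(f)-mw_c\geq-mw_c$ by~(\ref{E:ordc}). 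Part (v) follows at once: each pole of $\varphi$ occurs at a cusp $c$ with pole order at most $mw_c$, so the total number of poles with multiplicity is at most $\sum_{c\in\calC_G}mw_c=m\mu$ by~(\ref{E:sum of widths}).

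For (ii), given $A\in\GL_2(\ZZ/N\ZZ)$ write $A=A'\left(\begin{smallmatrix}1&0\\0&\det A\end{smallmatrix}\right)$ with $A'\in\SL_2(\ZZ/N\ZZ)$. The $q$-expansion of $f*A'$ has coefficients in $\ZZ[\zeta_N]$ by Theorem~\ref{T:modular form of weight k}(\ref{T:modular form of weight k c}) and $1/\Delta^m$ has integral coefficients, so the $q$-expansion of $\varphi*A'=(f*A')/\Delta^m$ has coefficients in $\ZZ[\zeta_N]$; passing from $\varphi*A'$ to $\varphi*A$ applies $\sigma_{\det A}$ to each coefficient by Lemma~\ref{L:Shimura basic}, and $\sigma_{\det A}$ preserves $\ZZ[\zeta_N]$. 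For (iv): when $c\in\Sigma$, Theorem~\ref{T:modular form of weight k}(\ref{T:modular form of weight k a}) gives $\nu_c(f)\geq mw_c$, so $\ord_c(\varphi)\geq0$ by~(\ref{E:ordc}) and $\varphi$ is regular at $c$. To pin down the value, take $A\in\SL_2(\ZZ/N\ZZ)$ with $A\cdot c_\infty=c$, put $w=w_c$, write $f*A=\sum_{n\geq0}b_nq_w^n$ as in Theorem~\ref{T:modular form of weight k}(\ref{T:modular form of weight k c}), and multiply by $1/\Delta^m=q_w^{-mw}\sum_{j\geq0}e_jq_w^{wj}$: the constant term of $\varphi*A$ is $\sum_{j\geq0}b_{w(m-j)}e_j$, and since $b_n=0$ for $n<\nu_c(f)$ while $\nu_c(f)\geq mw$, only the term $b_{mw}e_0=b_{mw}$ can be nonzero. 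Hence $\varphi(c)\in\ZZ[\zeta_N]$, and for any infinite place $v$ of $L$ (restricting $v$ to $\QQ(\zeta_N)$ and applying Theorem~\ref{T:modular form of weight k}(\ref{T:modular form of weight k c})) we get $|\varphi(c)|_v\leq|b_{mw}|_v\leq\beta\max\{1,(mw/w)^{24m}\}=\beta m^{24m}$, using $m\geq1$.

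The one place needing careful bookkeeping is the identification of $\varphi(c)$ with the single coefficient $b_{mw}$ in (iv): one has to track how multiplying the $q_w$-series of $f*A$ by $q_w^{-mw}\sum_je_jq_w^{wj}$ shifts and recombines coefficients, and then use the vanishing $b_n=0$ for $n<\nu_c(f)$ together with $\nu_c(f)\geq mw$ to conclude that nothing but $b_{mw}$ reaches the constant term. All of the remaining steps are direct applications of~(\ref{E:ordc}), (\ref{E:sum of widths}), Lemma~\ref{L:Shimura basic}, and the bounds already packaged in Theorem~\ref{T:modular form of weight k}.
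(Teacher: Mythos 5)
Your proof is correct and follows essentially the same approach as the paper. The only cosmetic difference is that the paper obtains parts (i), (iii), and the regularity in (iv) by citing Lemma~\ref{L:T image} (i.e.\ $\varphi=T(f)\in\scrL(D_1)$), whereas you inline the same computations $\ord_c(\varphi)=\nu_c(f)-mw_c$ via (\ref{E:ordc}) and the properties of $\Delta$; the underlying argument, including the constant-term bookkeeping $\varphi(c)=b_{mw}$ in (iv) and the $A=A'\left(\begin{smallmatrix}1&0\\0&\det A\end{smallmatrix}\right)$ decomposition in (ii), matches the paper's.
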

\begin{proof}
With notation as in \S\ref{S:RR}, we have $f\in W_m$ by our choice of $f$ and hence $\varphi \in \scrL(D_1)$ by Lemma~\ref{L:T image}, where $D_1:=\sum_{c\in \calC_G-\Sigma} mw_c\cdot c$.  Since $\varphi \in \scrL(D_1)$, the function $\varphi$ has no poles away from the cusps, $\ord_c \varphi \geq 0$ for all $c\in \Sigma$, and $\ord_c \varphi \geq -m w_c$ for all $c\in \calC_G$.  The function $\varphi$ is nonconstant since $f\notin K_G\Delta^m$ by our choice of $f$.  We have proved (\ref{L:varphi basic i}) and (\ref{L:varphi basic iii}).  By (\ref{L:varphi basic i}) and (\ref{L:varphi basic iii}), the number of poles of $\varphi$ is bounded above by $\sum_{c\in \calC_G} mw_c=m\mu$, where the equality uses (\ref{E:sum of widths}). This proves (\ref{L:varphi basic v}). 

We now prove (\ref{L:varphi basic ii}).  Since $\Delta^m$ lies in $q^{m}\cdot(1+q\ZZ[\![q]\!])$ and $\varphi*A=(f*A)/(\Delta^m*A)=(f*A)/\Delta^m$, it suffices to show that the $q$-expansion of $f*A$ has coefficients in $\ZZ[\zeta_N]$ for any $A\in \GL_2(\ZZ/N\ZZ)$.  Take any $A\in \SL_2(\ZZ/N\ZZ)$.  Define the cusp $c:=A\cdot c_\infty$ of $X_G$, where $c_\infty$ is the cusp at infinity, and set $w=w_c$.   By property (\ref{T:modular form of weight k c}) of Theorem~\ref{T:modular form of weight k}, we have $f*A=\sum_{n=0}^\infty b_n q_w^n$ with $b_n \in \ZZ[\zeta_N]$.   For any $B=\left(\begin{smallmatrix}1 & 0 \\0 & d\end{smallmatrix}\right) \in \GL_2(\ZZ/N\ZZ)$,  we have $f*(AB)=(f*A)*B=\sum_{n=0}^\infty \sigma_d(b_n) q_w^n \in \ZZ[\zeta_N][\![q_w]\!]$ by Lemma~\ref{L:star action}.  This completes the proof of (\ref{L:varphi basic ii}) since any matrix in $\GL_2(\ZZ/N\ZZ)$ is of the form $AB$ for some $A\in \SL_2(\ZZ/N\ZZ)$ and $B=\left(\begin{smallmatrix}1 & 0 \\0 & d\end{smallmatrix}\right)$.

We finally prove (\ref{L:varphi basic iv}).  Take any $c\in \Sigma$; we have already shown that $\varphi$ is regular at $c$.  Fix $A\in \SL_2(\ZZ/N\ZZ)$ for which $A\cdot c_\infty=c$ and set $w=w_c$.  We have $\nu_{c}(f)\geq mw$, so $f*A = \sum_{n=mw}^\infty b_n q_w^n$ with $b_n\in \ZZ[\zeta_N]$.   We have $\Delta^m \in q^{m}\cdot(1+q\ZZ[\![q]\!])=q_w^{mw}\cdot(1+q\ZZ[\![q]\!])$, so the constant term of the $q$-expansion of $\varphi=f*A/\Delta^m$ is $b_{mw}$.   Therefore, $\varphi(c)=b_{mw}$ and in particular $\varphi(c)\in \ZZ[\zeta_N]$.  For any infinite place $v$ of $L$, we have $|\varphi(c)|=|b_{mw}|\leq \beta m^{2k}= \beta m^{24m}$ by Theorem~\ref{T:modular form of weight k}(\ref{T:modular form of weight k c}).
\end{proof}

From the previous lemma, we have  proved (\ref{T:nouveau varphi B}) and (\ref{T:nouveau varphi C}).   The  following lemma proves (\ref{T:nouveau varphi A}).

\begin{lemma}
The function $\varphi$ is the root of a monic polynomial with coefficients in $\ZZ[j]$.
\end{lemma}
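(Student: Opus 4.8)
The plan is to exhibit the monic polynomial explicitly as a product over the $\GL_2(\ZZ/N\ZZ)$-conjugates of $\varphi$ under the action $*$, and then to identify its coefficients. Let $r=[\GL_2(\ZZ/N\ZZ):G]$ and fix representatives $A_1=I,A_2,\dots,A_r$ for the right cosets $G\backslash\GL_2(\ZZ/N\ZZ)$. Since $-I\in G$ acts trivially on $\calF_N$ and $\varphi$ lies in $\calF_N^G$, the conjugate $\varphi*A_i\in\calF_N$ depends only on the coset $GA_i$, and we may form
\[
P(x):=\prod_{i=1}^r\bigl(x-\varphi*A_i\bigr)\in\calF_N[x],
\]
which is monic of degree $r\ge 1$ and satisfies $P(\varphi)=0$ because $\varphi=\varphi*A_1$. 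For any $B\in\GL_2(\ZZ/N\ZZ)$, right multiplication by $B$ permutes the cosets $GA_i$, hence permutes the multiset $\{\varphi*A_i\}_{i}$; since $*B$ is a ring automorphism of $\calF_N$, it follows that every coefficient of $P$ is fixed by $*B$. Thus the coefficients of $P$ lie in $\calF_N^{\GL_2(\ZZ/N\ZZ)}=\QQ(j)$ by Lemma~\ref{L:Shimura basics 2}(\ref{L:Shimura basics 2 ii}).

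It remains to improve $\QQ(j)$ to $\ZZ[j]$. Every automorphism $*A$ of $\calF_N$ fixes $j$ (because $j\in\calF_N^{\GL_2(\ZZ/N\ZZ)}$) and restricts to $\sigma_{\det A}$ on the constants $\QQ(\zeta_N)$, so it maps the subring $\QQ(\zeta_N)[j]$ onto itself and therefore preserves its integral closure $R$ in $\calF_N$. This $R$ is precisely the set of functions in $\calF_N$ whose poles lie only at the cusps, since $R$ is the coordinate ring of the smooth affine curve $j^{-1}(\AA^1)\subseteq X(N)_{\QQ(\zeta_N)}$, which is finite over $\QQ(\zeta_N)[j]$. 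By Lemma~\ref{L:varphi basic}(\ref{L:varphi basic i}) the poles of $\varphi$ occur only at cusps, so $\varphi\in R$ and hence $\varphi*A_i\in R$ for all $i$; consequently every coefficient of $P$ is integral over $\QQ(\zeta_N)[j]$. Being also an element of $\QQ(j)$, and since $\QQ(\zeta_N)[j]$ is integrally closed in its fraction field $\QQ(\zeta_N)(j)$, each such coefficient lies in $\QQ(\zeta_N)[j]\cap\QQ(j)=\QQ[j]$.

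Finally I would use $q$-expansions at the cusp at infinity. By Lemma~\ref{L:varphi basic}(\ref{L:varphi basic ii}) each $\varphi*A_i$ has $q$-expansion with coefficients in $\ZZ[\zeta_N]$, so each coefficient of $P$, being a polynomial expression in the $\varphi*A_i$, has $q$-expansion in $\ZZ[\zeta_N](\!(q_N)\!)$. Writing such a coefficient as $\sum_\nu c_\nu j^\nu$ with $c_\nu\in\QQ$ and recalling that the $q$-expansion of $j$ is $q_N^{-N}+744+\cdots\in\ZZ(\!(q_N)\!)$ with leading coefficient $1$, a descending induction on $\nu$ — reading off the coefficient of $q_N^{-\nu N}$ at each stage — forces every $c_\nu\in\ZZ[\zeta_N]\cap\QQ=\ZZ$. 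Hence $P(x)\in\ZZ[j][x]$ is monic with $P(\varphi)=0$, as required. The one point calling for care is the descent all the way down to $\ZZ[j]$: this is why one must take the product over all of $\GL_2(\ZZ/N\ZZ)$ (the $\SL_2$-conjugates alone would only give coefficients in $\QQ(\zeta_N)[j]$), and why one must check that the $\sigma_d$-twisted automorphisms $*A$ preserve $R$ — which they do because $j$ is $\GL_2(\ZZ/N\ZZ)$-invariant. Everything else is routine bookkeeping with symmetric functions and $q$-expansions.
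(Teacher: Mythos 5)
Your proof is correct and follows essentially the same strategy as the paper's: form the monic polynomial as a product over right $G$-coset representatives of $\GL_2(\ZZ/N\ZZ)$, observe the coefficients lie in $\calF_N^{\GL_2(\ZZ/N\ZZ)}=\QQ(j)$ and have poles only at cusps so they lie in $\QQ[j]$, and then use integrality of the $q$-expansion coefficients (Lemma~\ref{L:varphi basic}(\ref{L:varphi basic ii})) to conclude they lie in $\ZZ[j]$. The paper states the final step more briefly (an element of $\QQ[j]$ lies in $\ZZ[j]$ if and only if its $q$-expansion has integral coefficients), while you spell out the descending induction, but the underlying argument is identical.
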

\begin{proof}
Define the polynomial $Q(x)=\prod_{A\in R}(x- \varphi*A)$, where $R$ is a set of representatives of the right $G$-cosets of $\GL_2(\ZZ/N\ZZ)$.     Since $\varphi$ is fixed by the right action of $G$ on $\calF_N$, we deduce that $Q(x)$ is independent of the choice of $R$ and its coefficients lie in $\calF_N^{\GL_2(\ZZ/N\ZZ)}=\QQ(j)$.  

By Lemma~\ref{L:varphi basic}(\ref{L:varphi basic i}), the function $\varphi$ has all of its poles at cusps.  So for each $A\in R$, $\varphi*A$ has all of its poles at cusps.  The coefficients of $Q(x)$ lie in $\QQ(j)$ and have all of their poles at cusps as well.  Therefore, the coefficients of $Q(x)$ lie in $\QQ[j]$.   The coefficients of $Q(x)$ all have $q$-expansions with integral coefficients by Lemma~\ref{L:varphi basic}(\ref{L:varphi basic ii}).  The lemma follows by noting that an element of $\QQ[j]$ lies in $\ZZ[j]$ if and only if its $q$-expansion has integral coefficients (to see this use that $j=q^{-1}+744+196884q + \cdots \in \ZZ(\!(q)\!)$).
\end{proof}

Now take any cusp $c \in \calC_G$ and set $w=w_c$.   Choose an $A\in \SL_2(\ZZ/N\ZZ)$ for which $A\cdot c_\infty=c$.    By property (\ref{T:modular form of weight k c}) of Theorem~\ref{T:modular form of weight k}, we have $f*A=\sum_{n=0}^\infty b_n q_w^n$ with $b_n \in \ZZ[\zeta_N]$.    
We also have a $q$-expansion $\Delta^m=q^m\prod_{i=1}^\infty (1-q^n)^{24m}=\sum_{n=m}^\infty a_n q^n$ with $a_n\in \ZZ$.  

Note that $\Delta^{-1}=q^{-1} h(q)$ with $h(x):=\prod_{n=1}^\infty (1-x^n)^{-24} \in \ZZ[\![x]\!]$.  For later, we remark that a numerical computation shows that
\begin{align} \label{E:easy h}
|h(t^N)|_v \leq  \prod_{n=1}^\infty (1-e^{-\pi\sqrt3 n})^{-24} < 1.1104
\end{align}
holds for any infinite place $v$ of $L$ and any $t\in\Lbar_v$ with $|t|_v\leq e^{-\pi\sqrt{3}/N}$.  We now show that with respect to a place $v$ of $L$, $\varphi$ can be given by an explicit $v$-analytic expression on a neighborhood of $c$.

\begin{lemma} \label{L:final t}
Take any place $v$ of $L$ and point $P\in \Omega_{c,v}-\{c\}$.  Then there is a nonzero $t\in \Lbar_v$ such that all the following hold:
\begin{itemize}
\item
We have $|t|_v<1$.  If $v$ is infinite, then $|t|_v\leq e^{-\pi \sqrt{3}/N}$. 
\item
We have 
\begin{align} \label{E:varphiA}
\varphi(P)=t^{-mN} h(t^N)^m \sum_{n=0}^\infty b_n t^{nN/w}
\end{align}
in $\Lbar_v$.
\item
If $v$ is finite, we have $|j(P)|_v=|t|_v^{-N}$.  
\item
If $v$ is infinite and $|j(P)|_v>3500$, we have $|t|_v^{N} \leq 2|j(P)|_v^{-1}$. 
\end{itemize}
\end{lemma}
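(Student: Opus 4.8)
The plan is to transport everything to the curve $X(N)$, where $\varphi=f/\Delta^m$ has a completely explicit $q$-expansion, and then to use the $v$-analytic maps $\psi_{A,v}$ of Proposition~\ref{P:specializations} to evaluate $\varphi$ near $c$. Recall from \S\ref{SS:nearby 3} that $\Omega_{c,v}=\pi\big(\bigcup_{c'\in\calC,\ \pi(c')=c}\Omega_{c',v}\big)$, where $\pi\colon X(N)_{\QQ(\zeta_N)}\to(X_G)_{\QQ(\zeta_N)}$ is the natural morphism and $\calC$ is the set of cusps of $X(N)$. Given $P\in\Omega_{c,v}-\{c\}$, I would first pick a cusp $c'$ of $X(N)$ with $\pi(c')=c$ and a point $P'\in\Omega_{c',v}$ with $\pi(P')=P$; since $P$ is not a cusp while $c'$ is the only cusp of $X(N)$ in $\Omega_{c',v}$, the point $P'$ is not a cusp. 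Fixing $A'\in\SL_2(\ZZ/N\ZZ)$ with $A'\cdot c_\infty=c'$, Proposition~\ref{P:specializations} lets us write $P'=\psi_{A',v}(t_0)$ with $t_0\in B_v$, and $t_0\neq0$ because $P'$ is not a cusp; the definition of $\Omega_{c',v}$ gives $|t_0|_v<1$, and $|t_0|_v\le e^{-\pi\sqrt3/N}$ when $v$ is infinite. Since $j$ is compatible with $\pi$, also $j(P)=j(P')$.

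Next I would make $\varphi*A$ explicit. From $f*A=\sum_{n\ge0}b_n q_w^n=\sum_{n\ge0}b_n q_N^{nN/w}$ and $\Delta^{-m}=q^{-m}\prod_{n\ge1}(1-q^n)^{-24m}=q_N^{-mN}h(q_N^N)^m$ we get
\[
\varphi*A=\frac{f*A}{\Delta^m}=q_N^{-mN}h(q_N^N)^m\sum_{n\ge0}b_n q_N^{nN/w},
\]
and, since $h\in\ZZ[\![x]\!]$ converges on the open unit disk while the $b_n$ lie in $\ZZ[\zeta_N]$ and grow only polynomially by Theorem~\ref{T:modular form of weight k}(\ref{T:modular form of weight k c}), the nonnegative part of this $q_N$-expansion has radius of convergence at least $1$. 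To pass from $A'$ to $A$, note that $A$ and $A'$ both send $c_\infty$ to $c$ in $X_G$, so after lifting to $\SL_2(\ZZ)$ and reducing modulo $N$ (using $-I\in G$, whence $\bbar{G}=G$, so that the sign in the $U$-factor is absorbed) one gets $A'\equiv g\,A\left(\begin{smallmatrix}1&b\\0&1\end{smallmatrix}\right)\pmod{N}$ for some $g\in G\cap\SL_2(\ZZ/N\ZZ)$ and $b\in\ZZ$. As $\varphi\in\calF_N^G$ is fixed by $G$, this gives $\varphi*A'=(\varphi*A)*\left(\begin{smallmatrix}1&b\\0&1\end{smallmatrix}\right)$, whose $q_N$-expansion is that of $\varphi*A$ after the substitution $q_N\mapsto\zeta_N^b q_N$, exactly as in the proof of Lemma~\ref{L:Omega first}. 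Setting $t:=\zeta_N^b t_0$ — so $|t|_v=|t_0|_v$ and, crucially, $t^N=t_0^N$ since $\zeta_N^{bN}=1$ — and applying Proposition~\ref{P:specializations}(\ref{P:specializations a}) to $\varphi$ and $A'$ (legitimate since $t_0\neq0$ and $|t_0|_v<1$ is at most the radius above), I obtain
\[
\varphi(P)=\varphi(P')=\sum_n a_n(\varphi*A')\,t_0^{\,n}=\sum_n a_n(\varphi*A)\,t^{\,n},
\]
and substituting $q_N=t$ into the displayed formula for $\varphi*A$ gives precisely (\ref{E:varphiA}). This settles the first two bullets together with the size constraints on $|t|_v$.

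For the remaining bullets, Proposition~\ref{P:specializations}(\ref{P:specializations b}) and $t^N=t_0^N$ give $j(P)=j(P')=J(t^N)$, so when $v$ is finite $|j(P)|_v=|t_0|_v^{-N}=|t|_v^{-N}$, which is the third bullet. When $v$ is infinite and $|j(P)|_v>3500$, write $x:=t^N$, so $|x|_v\le e^{-\pi\sqrt3}$. Using the $q$-expansion $J(q)^{-1}=q-744q^2+\cdots\in\ZZ[\![q]\!]$ from the proof of Proposition~\ref{P:specializations}, we have $j(P)^{-1}=J(x)^{-1}=x\,G(x)^{-1}$ where $G(q):=qJ(q)=1+744q+196884q^2+\cdots$, hence $|x|_v=|j(P)|_v^{-1}\,|G(x)|_v$. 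I would then bound $|G(x)|_v\le 1+\sum_{n\ge1}c_n|x|_v^n$ with $c_n$ the coefficients of $j$, and run a short bootstrap: starting from $|x|_v\le e^{-\pi\sqrt3}$ this gives a crude bound $|x|_v\le|j(P)|_v^{-1}\cdot(\text{const})$, which together with $|j(P)|_v>3500$ gives a much smaller bound on $|x|_v$, and after two or three such iterations one reaches $|G(x)|_v<2$, whence $|t|_v^N=|x|_v<2|j(P)|_v^{-1}$.

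The one genuinely delicate point is this last bootstrap: one must check that $\sum_{n\ge1}c_n e^{-\pi\sqrt3 n}$ converges to a manageable constant — it does, since the coefficients $c_n$ of $j$ satisfy the classical bound $c_n=O(e^{4\pi\sqrt n})$ — and that the threshold $3500$ leaves enough room for the iteration $|x|_v\le|j(P)|_v^{-1}|G(x)|_v$ to terminate with $|G(x)|_v<2$. Everything else is bookkeeping; the only subtle piece there is tracking the root of unity $\zeta_N^b$ when replacing $A'$ by $A$, which is resolved cleanly by the identity $t^N=t_0^N$.
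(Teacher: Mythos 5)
Your proof is correct and follows essentially the same path as the paper's: reduce to $X(N)$, make the Laurent series $\varphi*A=q_N^{-mN}h(q_N^N)^m\sum_n b_n q_N^{nN/w}$ explicit, verify that its nonnegative part converges on the open unit disk, and evaluate via Proposition~\ref{P:specializations}. Two remarks. First, the bookkeeping you do for the transition from $A'$ (with $A'\cdot c_\infty=c'$ in $X(N)$) back to the fixed $A$ — writing $A'\equiv gA\left(\begin{smallmatrix}1 & b\\0 & 1\end{smallmatrix}\right)\pmod N$ with $g\in G\cap\SL_2(\ZZ/N\ZZ)$, using $\varphi*g=\varphi$, and setting $t:=\zeta_N^b t_0$ so that $t^N=t_0^N$ — makes explicit a step the paper treats implicitly via Lemma~\ref{L:Omega first}(\ref{L:Omega first i}); this is fine and arguably cleaner. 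Second, the last bullet is where you genuinely diverge: the paper simply cites \cite[Corollary~2.2]{BiluParent} for $|t|_v^N\le 2|j(P)|_v^{-1}$, whereas you sketch a self-contained bootstrap. The bootstrap does work, but the iteration count is higher than ``two or three'': with $x=t^N$, $u_0:=e^{-\pi\sqrt3}\approx 0.00433$, and $\sum_{n\ge1}c_n u_0^n\approx 1335$ (where the $c_n$ are the Fourier coefficients of $j$), the first pass only gives $|G(x)|_v\lesssim 10$, hence $|x|_v\lesssim 10/3500$; it takes roughly four iterations to reach $|G(x)|_v<2$. The threshold $3500$ is comfortably sufficient for this to converge, but you should either carry the numerics through explicitly or cite the BiluParent estimate as the paper does.
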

\begin{proof}
Denote by $\pi\colon X(N)_{\QQ(\zeta_N)}\to (X_G)_{\QQ(\zeta_N)}$ the natural morphism corresponding to the inclusion $\QQ(\zeta_N)(X_G)\subseteq \calF_N$ of function fields.  By our definition of $\Omega_{c,v}$, there is a cusp $c'$ of $X(N)$ and a point $P'\in \Omega_{c',v}$ such that $\pi(c')=c$ and $\pi(P')=P$.  We have $P\neq c'$ since otherwise  $P=c$.  Since $\varphi \in K_G(X_G)\subseteq \QQ(\zeta_N)(X(N))$, we can view $\varphi$ as a rational function on $X(N)$ (equivalently, we denote $\varphi\circ \pi$ by $\varphi$ as well).  We have $\varphi(P)=\varphi(P')$ and $j(P)=j(P')$.   So without loss of generality, we may assume that $c$ is a cusp of $X(N)$, $P\in \Omega_{c,v}-\{c\}\subseteq X(N)(\Lbar_v)$ and we may view $\varphi$ as a function in $\QQ(\zeta_N)(X(N))=\calF_N$. 

We have 
\begin{align} \label{E:varphiA 2}
\varphi*A=\Delta^{-m}(f*A)=q^{-m}h(q)^m \sum_{n=0}^\infty b_n q_w^n= q_N^{-mN}h(q^N)^m \sum_{n=0}^\infty b_n q_N^{nN/w}
\end{align}
which when expanded is the $q$-expansion of $\varphi$.   We claim that the radius of convergence of (\ref{E:varphiA}), viewed as a power series in $\Lbar_v[\![q_N]\!]$, is at least $1$.  Since the coefficients of $h$ and all the $b_n$ are integral, the claim is immediate when $v$ is finite.   When $v$ is infinite, the claim follows from the bounds on $|b_n|_v$ given by property (\ref{T:modular form of weight k c}) of Theorem~\ref{T:modular form of weight k}.

For our fixed $A$, let $\psi_{A,v}\colon B_v\to X(N)(\Lbar_v)$ be the continuous map from Proposition~\ref{P:specializations}.  By the definition of $\Omega_{c,v}$, we have $P=\psi_{A,v}(t)$ for some nonzero $t \in B_v$ that also satisfies $|t|_v\leq e^{-\pi\sqrt{3}/N}$ when $v$ is infinite.  The $v$-analytic expression (\ref{E:varphiA}) thus holds by (\ref{E:varphiA 2}) and Proposition~\ref{P:specializations}.  By Proposition~\ref{P:specializations}, we also have $j(P)=t^{-N}+744+196884t^N+21493760t^{2N} +\cdots$ in $\Lbar_v$, where the coefficients are from the familiar $q$-expansion of $j$.   When $v$ is finite this implies that $|j(P)|_v=|t|_v^{-N}$.   When $v$ is infinite and $|j(P)|_v>3500$, we have $|t|_v^{N} \leq 2|j(P)|_v^{-1}$, cf.~\cite[Corollary~2.2]{BiluParent}.
\end{proof}

We now assume that the cusp $c=A\cdot c_\infty$ lies in $\Sigma$.   By Lemma~\ref{L:varphi basic}(\ref{L:varphi basic iv}), $\varphi$ is regular at $c$ and $\varphi(c)\in \ZZ[\zeta_N]$.  The function $\varphi-\varphi(c)$ lies in $\QQ(\zeta_N)(X_G)$ and has a zero at $c$.  

Fix an integer $1\leq r \leq \ord_c(\varphi-\varphi(c))$.   Since $A\in \SL_2(\ZZ/N\ZZ)$, we have $(\varphi-\varphi(c))*A=\varphi*A-\varphi(c)=(f*A)/\Delta^m-\varphi(c)$.  Since $r \leq \ord_c(\varphi-\varphi(c))$, the $q$-expansion of $(f*A)/\Delta^m-\varphi(c)$ lies in $q_w^{r} \cdot\QQ(\zeta_N)[\![q_w]\!]$ and hence the $q$-expansion of $f*A-\varphi(c) \Delta^m$ lies in $q_w^{mw+r} \cdot\QQ(\zeta_N)[\![q_w]\!]$. 
 Therefore,
 \begin{align} \label{E:qexpansion with cancel}
 f*A -\varphi(c) \Delta^m = \sum_{n=mw+r}^\infty b_{n}q_w^n -\varphi(c) \sum_{n\geq m+r/w}a_{n} q^{n}.
 \end{align}

Take any  place $v$ of $L$ and take any point $P\in \Omega_{c,v}-\{c\}$ that satisfies $|j(P)|_v>3500$ when $v$ is infinite.  Let $t\in \Lbar_v$ be an nonzero element satisfying the conclusions of Lemma~\ref{L:final t}.   We have
\begin{align} \label{E:varphiP raw t}
\varphi(P)-\varphi(c)&=t^{-mN} h(t^N)^m \sum_{n=0}^\infty b_n t^{nN/w} - \varphi(c)\\
\notag &= t^{-mN} h(t^N)^m\bigg(\sum_{n=mw+r}^\infty b_{n}t^{nN/w} -\varphi(c) \sum_{n\geq m+r/w}a_{n} t^{nN}\bigg),
\end{align}
where we have used $\Delta^{-m}=q^{-m}h(q)^{m}$ along with (\ref{E:qexpansion with cancel}) which ensures early terms of our sums will cancel.  Therefore,
\begin{align} \label{E:texpansion with cancel}
\varphi(P)-\varphi(c)&= t^{rN/w} h(t^N)^m \bigg(\sum_{n=mw+r}^\infty b_{n}t^{(n-mw-r)N/w} -\varphi(c) \sum_{n\geq m+r/w}a_{n} t^{(n-m-r/w)N}\bigg).
\end{align}

Consider the case where $v$ is finite.   From (\ref{E:texpansion with cancel}), we have $\varphi(P)-\varphi(c)=t^{rN/w} g(t)$, where $g$ is a power series with coefficients in $\ZZ[\zeta_N]$.  Therefore, $|\varphi(P)-\varphi(c)|_v \leq |t|_v^{rN/w} = |j(P)|_v^{-r/w}$.   We obtain $|\varphi(P)-\varphi(c)|_v \leq |j(P)|_v^{-1/w}$ by taking $r=1$.  This proves (\ref{T:nouveau varphi D}) in the case that $v$ is finite.

Now suppose that $v$ is infinite and take $r=1$.  Starting with (\ref{E:varphiP raw t}) and taking absolute values gives
\begin{align}\label{E:abs inf version}
|\varphi(P)-\varphi(c)|_v\leq u^{-m} |h(t^N)|_v^m\bigg(\sum_{n=mw+1}^\infty |b_{n}|_v u^{n/w} +|\varphi(c)|_v \sum_{n= m+1}^\infty |a_{n}| u^{n}\bigg),
\end{align}
where $u:=|t|_v^N\leq e^{-\pi\sqrt{3}}$.  Using our bound $|b_n|_v \leq \beta (n/w)^{24m}$ for $n\geq mw$, Lemma~\ref{L:sum of cn}(\ref{L:sum of cn ii}) with $B=mw+1$ implies that  
\[
u^{-m} \sum_{n=mw+1}^\infty |b_{n}|_vu^{n/w} \leq \beta\cdot u^{1/w}\cdot 231.6 w (5m)^{24m+1}.
\]  
By Lemma~\ref{L:Deltam sum bounds}(\ref{L:Deltam sum bounds iii}) with $B=m+1$, we have $u^{-m}\sum_{n=m+1}^\infty |a_n| u^n \leq 465 u(2m)^{6m+1}$ and hence
\[
u^{-m} |\varphi(c)|_v \sum_{n=m+1}^\infty |a_n| u^n \leq \beta\cdot u\cdot 465 \cdot 2^{6m+1} m^{30m+1}
\]
by Lemma~\ref{L:varphi basic}(\ref{L:varphi basic iv}).  We have $|h(t^N)|_v <1.1104$ by (\ref{E:easy h}).  Combining the above bounds with (\ref{E:abs inf version}) gives
\begin{align*}
|\varphi(P)-\varphi(c)|_v &\leq \beta\cdot 1.1104^m( u^{1/w}\cdot 231.6 w (5m)^{24m+1} + u\cdot 465 \cdot 2^{6m+1}  m^{30m+1})\\
&\leq u^{1/w} \beta  w \cdot 1.1104^m( 231.6  (5m)^{24m+1} +  465 \cdot 2^{6m+1}  m^{30m+1})\\
&\leq u^{1/w} \beta  w \cdot 1.1104^m( 231.6  \cdot 5^{24m+1} +  465 \cdot 2^{6m+1}  ) m^{30m+1}\\
&= u^{1/w} \beta  w \cdot 1.1104^m 1158\cdot 5^{24m}( 1 +  \tfrac{465}{231.6}\cdot \tfrac{2}{5} \cdot (\tfrac{2^6}{5^{24}})^m  ) m^{30m+1}\\
&\leq u^{1/w} \beta C_0/2,
\end{align*}
where $C_0:=N \cdot 2\cdot 1159\cdot 5.022^{24m} m^{30m+1}$.  Since $u^{1/w}\leq 1$, we deduce that $|\varphi(P)-\varphi(c)|_v\leq  \beta  C_0/2$.   Now assume further that $|j(P)|_v>3500$. We have $u=|t|_v^N\leq 2|j(P)|_v^{-1}$ by Lemma~\ref{L:final t} and our of choice of $t$.  So $u^{1/w}\leq (2|j(P)|_v^{-1})^{1/w}\leq 2 |j(P)|_v^{-1/w}$ and hence $|\varphi(P)-\varphi(c)|_v\leq  |j(P)|_v^{-1/w} \cdot \beta C_0$.

To complete the proof of (\ref{T:nouveau varphi D}) in the case that $v$ is infinite, it suffices to show that $C_0\leq C$.  Since $m\leq \tfrac{1}{24}N^3$ by Lemma~\ref{L:trivial bounds}, we have 
\[
C_0\leq 2\cdot 1159\cdot 5.022^{24m}  (\tfrac{1}{24})^{30m+1} N^{90m+4}\leq 96.6 \cdot 0.1^{24m} N^{90m+4} = C.\\
\]

It remains to prove (\ref{T:nouveau varphi E}).  Let $K$ be any number field with $K_G\subseteq K \subseteq L$ and let $\Sigma'$ be a $\Gal_K$-orbit of $\Sigma$.   We keep notation as above with a fixed cusp $c\in \Sigma'\subseteq\Sigma$ and we take $r=\ord_c(\varphi-\varphi(c))$.   We further assume that $\varphi(c)\in K$; if no such $c$ exists then (\ref{T:nouveau varphi E}) holds trivially with $\xi=1$.  Since the divisor of $\varphi-\varphi(c)$ has degree $0$, we have $r\leq m\mu$ by Lemma~\ref{L:varphi basic}(\ref{L:varphi basic v}). 

We have $f*A-\varphi(c)\Delta^2=\gamma q_w^{mw+r}+\cdots$ with $\gamma\in \ZZ[\zeta_N]$ nonzero.  From (\ref{E:qexpansion with cancel}), we find that 
\begin{align} \label{E:gamma b a}
\gamma=
\begin{cases}
 b_{mw+r}-\varphi(c)a_{m+r/w} & \text{ if $w|r$},\\
  b_{mw+r} & \text{ if $w\nmid r$.}
\end{cases}
\end{align}
The rational function $h:=(\varphi-\varphi(c))^w j^r$ lies in $K(X_G)$ and we have $\ord_c h = wr-rw=0$.   Moreover, $h(c)=\gamma^w$.   Since $h$ is defined over $K$, we have $\gamma^w=h(c)\in K(c)$.  Define
\[
\xi:=N_{K(c)/K}(\gamma^w) \in K.
\]
We have $\xi\in \OO_K$ since $\gamma$ is integral.  We have $\xi\neq 0$ since $\gamma\neq 0$.

We will prove that (\ref{T:nouveau varphi E}) holds with this particular $\xi$.  Since $\varphi(c)$ lies in $K$, by assumption, and $\varphi$ is defined over $K$, we deduce that $\varphi(c')=\varphi(c)\in K$ for all $c'\in \Sigma'$.   We also have $K(c')=K(c)$ and $w_{c'}=w_c$ for all $c\in \Sigma'$ since $\Sigma'$ has a transitive $\Gal(L/K)$-action and $K(c)/K$ is abelian.  Therefore, $h(c)=\gamma^w$ and $\xi=N_{K(c)/K}(\gamma^w)$ are independent of the choice of $c\in \Sigma'$.   So for the rest of the proof we can work with our fixed cusp $c$.

We now find upper bounds for $|\gamma|_v$ for some places $v$ of $L$.  Define $C':=22.16  N^{144m+7} 0.024^{24m}$.

\begin{lemma} \label{L:gamma bound infinite}
For any infinite place $v$ of $L$, we have $|\gamma|_v \leq \beta C'$.
\end{lemma}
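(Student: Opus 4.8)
The plan is to extract $\gamma$ from equation (\ref{E:gamma b a}), bound the one or two terms appearing there using the explicit Fourier-coefficient estimates already in hand, and then close up with the size bounds on $m$ and $\mu$ from Lemma~\ref{L:trivial bounds}.

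In both cases of (\ref{E:gamma b a}) the coefficient $b_{mw+r}$ occurs, so I would first estimate it. By Theorem~\ref{T:modular form of weight k}(\ref{T:modular form of weight k c}), $|b_{mw+r}|_v \le \beta\max\{1,((mw+r)/w)^{24m}\} = \beta(m+r/w)^{24m}$, using $m+r/w\ge m\ge 1$; and since $r\le m\mu$ by Lemma~\ref{L:varphi basic}(\ref{L:varphi basic v}) and $w\ge 1$, this gives $|b_{mw+r}|_v \le \beta\bigl(m(1+\mu)\bigr)^{24m}$. When $w\mid r$ there is the additional term $\varphi(c)\,a_{m+r/w}$; here $m+r/w$ is an integer which is $\ge 2$ since $r\ge w$, so Lemma~\ref{L:Deltam sum bounds}(\ref{L:Deltam sum bounds i}) yields $|a_{m+r/w}| \le 2(m+r/w)^{6m}\le 2\bigl(m(1+\mu)\bigr)^{6m}$, and combined with $|\varphi(c)|_v \le \beta m^{24m}$ from Lemma~\ref{L:varphi basic}(\ref{L:varphi basic iv}) this gives $|\varphi(c)\,a_{m+r/w}|_v \le 2\beta m^{30m}(1+\mu)^{6m}$. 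Adding the contributions (the case $w\nmid r$ being a subcase of the resulting bound), I obtain $|\gamma|_v \le \beta\bigl(m(1+\mu)\bigr)^{24m} + 2\beta m^{30m}(1+\mu)^{6m}$.

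It then remains to feed in $m\le \tfrac{1}{24}N^3$ and $\mu+1\le\tfrac{29}{54}N^3$ from Lemma~\ref{L:trivial bounds}. Since $\tfrac{1}{24}\cdot\tfrac{29}{54}=\tfrac{29}{1296}<0.024$, the first term is at most $\beta\,0.024^{24m}N^{144m}$, and for the second term one checks --- using $N\ge 3$, so that an extra factor $N^{36m}\ge 3^{36m}$ more than compensates for the smaller exponent of $N$ --- that $2m^{30m}(1+\mu)^{6m}\le 0.024^{24m}N^{144m}$ as well. Hence $|\gamma|_v\le 2\beta\,0.024^{24m}N^{144m}\le \beta\cdot 22.16\,N^{144m+7}\,0.024^{24m}=\beta C'$, the last step being merely $2\le 22.16\,N^7$ for $N\ge3$. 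The only genuine work here is this final round of constant-chasing; conceptually the proof is just an assembly of Theorem~\ref{T:modular form of weight k}(\ref{T:modular form of weight k c}), Lemma~\ref{L:Deltam sum bounds}, Lemma~\ref{L:varphi basic}(\ref{L:varphi basic iv}) and Lemma~\ref{L:trivial bounds}, so I do not anticipate any obstacle beyond keeping the numerical inequalities honest and tracking whether $w\mid r$.
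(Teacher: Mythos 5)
Your proof follows the same route as the paper's: bound $b_{mw+r}$ via Theorem~\ref{T:modular form of weight k}(\ref{T:modular form of weight k c}) and $r\le m\mu$, bound the $\varphi(c)\,a_{m+r/w}$ term (when $w\mid r$) via Lemma~\ref{L:Deltam sum bounds}(\ref{L:Deltam sum bounds i}) and Lemma~\ref{L:varphi basic}(\ref{L:varphi basic iv}), then substitute the $N$-bounds from Lemma~\ref{L:trivial bounds}. The only difference is cosmetic — you absorb both terms into a single $0.024^{24m}N^{144m}$ bound before comparing to $C'$, whereas the paper factors out the common part and bounds the parenthesized sum — and you correctly note the needed hypothesis $m+r/w\ge 2$ for Lemma~\ref{L:Deltam sum bounds}(\ref{L:Deltam sum bounds i}); the argument is correct.
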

\begin{proof}
Take any infinite place $v$ of $L$.  By Theorem~\ref{T:modular form of weight k}(\ref{T:modular form of weight k c}) and $r\leq m\mu$, we have
\[
|b_{mw+r}|_v\leq \beta (m+r/w)^{24m} \leq \beta m^{24m} (1+\mu)^{24m}.
\]
If $w$ divides $r$, Lemma~\ref{L:Deltam sum bounds}(\ref{L:Deltam sum bounds i}) and $r\leq m\mu$ implies that $|a_{m+r/w}|\leq 2(m+r/w)^{6m} \leq 2m^{6m}(1+\mu)^{6m}$ and hence
\[
|\varphi(c) a_{m+r/w}|_v \leq  \beta  \cdot 2m^{30m} (1+\mu)^{6m}
\]
by Lemma~\ref{L:varphi basic}(\ref{L:varphi basic iv}).   Using these bounds with (\ref{E:gamma b a}), we obtain 
\begin{align*}
|\gamma|_v & \leq \beta m^{24m} (1+\mu)^{24m} + \beta  \cdot 2m^{30m} (1+\mu)^{6m}.
\end{align*}
Using the bounds on $m$ and $\mu+1$ from Lemma~\ref{L:trivial bounds}, we obtain
\begin{align*}
|\gamma|_v & \leq \beta ( (\tfrac{1}{24}\cdot \tfrac{29}{54})^{24m} N^{144m} + 2  (\tfrac{1}{24})^{30m} (\tfrac{29}{54})^{6m} N^{108m})\\
&= \beta N^{144m} (\tfrac{1}{24}\cdot \tfrac{29}{54})^{24m} (1+ 2 (\tfrac{1}{24})^{6m} (\tfrac{54}{29})^{18m} N^{-36m})\\
&\leq \beta N^{144m} (\tfrac{1}{24}\cdot \tfrac{29}{54})^{24m} (1+ 2 (\tfrac{1}{24}(\tfrac{54}{29})^3 3^{-6})^{6m}),
\end{align*}
where in the last inequality we have used that $N\geq 3$.  Since $\tfrac{1}{24}(\tfrac{54}{29})^3 3^{-6}\leq 1$, we have $|\gamma|_v \leq \beta N^{144m} (\tfrac{1}{24}\cdot \tfrac{29}{54})^{24m} (1+ 2 (\tfrac{1}{24}(\tfrac{54}{29})^3 3^{-6})^{6})$.   It is now easy to verify that $|\gamma|_v\leq \beta C'$.
\end{proof}

\begin{lemma} \label{L:almost there xi}
Consider any point $P \in Y_G(K)$ that satisfies $\varphi(P)=\varphi(c)$ and $P\in \Omega_{c,v}$ for some place $v$ of $L$.  Further suppose that $|j(P)|_v>3500$ when $v$ is infinite.  Then 
\[
|\gamma|_v \leq
\begin{cases}
|j(P)|_v^{-1} & \text{ if $v$ is finite,}\\
|j(P)|_v^{-1} \cdot \beta C' & \text{ if $v$ is infinite and $|j(P)|_v>3500$.}
\end{cases}
\]
\end{lemma}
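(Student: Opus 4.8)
The plan is to prove this exactly as part~(\ref{T:nouveau varphi D}) of Theorem~\ref{T:nouveau varphi}, with the function $\varphi-\varphi(c)$ replaced by its leading coefficient $\gamma$ at $c$ and with the new input $\varphi(P)=\varphi(c)$. I keep all the notation fixed just before Lemma~\ref{L:gamma bound infinite}: $c\in\Sigma'$, a matrix $A\in\SL_2(\ZZ/N\ZZ)$ with $A\cdot c_\infty=c$, $w=w_c$, $r=\ord_c(\varphi-\varphi(c))$ (so $1\le r\le m\mu$ by Lemma~\ref{L:varphi basic}(\ref{L:varphi basic v})), and the expansion~(\ref{E:qexpansion with cancel}) of $f*A-\varphi(c)\Delta^m$, whose leading term is $\gamma\,q_w^{mw+r}$ with $\gamma\in\ZZ[\zeta_N]$ nonzero and given by~(\ref{E:gamma b a}). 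As in the proof of Lemma~\ref{L:final t}, after replacing $c$ and $P$ by a cusp and a point of $X(N)$ lying over them I may assume $c$ is a cusp of $X(N)$, that $P\in\Omega_{c,v}-\{c\}$ (indeed $P\ne c$, since $P\in Y_G(K)$ is not a cusp), and that $\varphi\in\calF_N$; this changes neither $j(P)$ nor $\varphi(P)$. Because $|j(P)|_v>3500$ when $v$ is infinite, Lemma~\ref{L:final t} supplies a nonzero $t\in\Lbar_v$ with $|t|_v<1$ (and $|t|_v\le e^{-\pi\sqrt3/N}$ when $v$ is infinite), with $|j(P)|_v=|t|_v^{-N}$ for finite $v$ and $|t|_v^N\le 2|j(P)|_v^{-1}$ for infinite $v$, for which~(\ref{E:texpansion with cancel}) holds.

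The hypothesis $\varphi(P)=\varphi(c)$ makes the left-hand side of~(\ref{E:texpansion with cancel}) vanish; dividing through by the nonzero factor $t^{rN/w}h(t^N)^m$ leaves
\[
0=\sum_{n\ge mw+r}b_n\,t^{(n-mw-r)N/w}-\varphi(c)\sum_{n\ge m+r/w}a_n\,t^{(n-m-r/w)N}.
\]
By~(\ref{E:gamma b a}) the constant term of the right-hand side is exactly $\gamma$, so moving it across displays $-\gamma$ as a convergent sum of terms each of which carries a positive power of $t$. For finite $v$, all the $b_n$, $a_n$ and $\varphi(c)$ are $v$-integral (the last by Lemma~\ref{L:varphi basic}(\ref{L:varphi basic iv})), so the strong triangle inequality together with $|t|_v<1$ and $|j(P)|_v=|t|_v^{-N}$ gives the asserted bound; this repeats the finite-place case of~(\ref{T:nouveau varphi D}).

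For infinite $v$ with $|j(P)|_v>3500$ I would take absolute values in the displayed identity, set $u:=|t|_v^N\le e^{-\pi\sqrt3}$, and bound the two sums exactly as in the infinite-place case of~(\ref{T:nouveau varphi D}): the first via $|b_n|_v\le\beta\,(n/w)^{24m}$ for $n\ge mw$ from Theorem~\ref{T:modular form of weight k}(\ref{T:modular form of weight k c}) together with Lemma~\ref{L:sum of cn} (here $1\le r\le m\mu$ keeps the resulting exponents under control), the second via $|a_n|\le 2n^{6m}$, the bound $|\varphi(c)|_v\le\beta m^{24m}$ of Lemma~\ref{L:varphi basic}(\ref{L:varphi basic iv}), and Lemma~\ref{L:Deltam sum bounds}, with $|h(t^N)|_v<1.1104$ from~(\ref{E:easy h}) carried along. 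This yields a bound of the shape $|\gamma|_v\le\beta\cdot(\text{an absolute constant depending only on }m)\cdot u^{1/w}$; inserting the bound for $u$ in terms of $|j(P)|_v$ from Lemma~\ref{L:final t} and then using $m\le\tfrac1{24}N^3$ (Lemma~\ref{L:trivial bounds}) to fold the accumulated constant into $C'$ finishes the proof. I expect the main chore to be this final numerical bookkeeping --- essentially a repeat of the constant-chasing already done for~(\ref{T:nouveau varphi D}) and in Lemma~\ref{L:gamma bound infinite} --- together with care over which low-order terms cancel, i.e.\ the $w\mid r$ versus $w\nmid r$ split recorded in~(\ref{E:gamma b a}).
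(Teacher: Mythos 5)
Your proposal matches the paper's proof in substance: pass to a cusp and point on $X(N)$ as in Lemma~\ref{L:final t}, get a parameter $t$ with the listed comparisons to $|j(P)|_v$, substitute $\varphi(P)-\varphi(c)=0$ into the expansion~(\ref{E:varphiP raw t})/(\ref{E:texpansion with cancel}), isolate $\gamma$ as the constant term (the $w\mid r$ split of~(\ref{E:gamma b a})), and bound the tail sums --- $v$-integrality for finite $v$, and Lemma~\ref{L:sum of cn}, Lemma~\ref{L:Deltam sum bounds}, the bound on $|\varphi(c)|_v$, and~(\ref{E:easy h}) for infinite $v$ --- before folding constants into $C'$ via Lemma~\ref{L:trivial bounds}. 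This is precisely what the paper does.

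One caution when you carry out the bookkeeping: the tail series in $\gamma$ carries the factor $t^{N/w}$, so the argument in fact yields $|\gamma|_v\le|j(P)|_v^{-1/w}$ at finite places (and $|\gamma|_v\le|j(P)|_v^{-1/w}\beta C'$ at infinite ones), not the exponent $-1$ as written in the lemma. That is an issue with the statement as printed --- the paper's own proof also only reaches $|j(P)|_v^{-1/w}$ --- and one checks that the weaker exponent still suffices in Lemma~\ref{L:partial height Si} and hence for Theorem~\ref{T:main}. Just do not expect the ``asserted bound'' to drop out verbatim.
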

\begin{proof}
Using equation (\ref{E:varphiP raw t}) and $\varphi(P)-\varphi(c)=0$, we obtain
\begin{align} \label{E:gamma unsolved}
0=\gamma + t^{-(mw+r)N/w} \sum_{n=mw+r+1}^\infty b_{n}t^{nN/w} -\varphi(c)t^{-(mw+r)N/w} \sum_{n> m+r/w}a_{n} t^{nN},
\end{align}
with $t\in \Lbar_v$ satisfying $|t|_v<1$.  If $v$ is finite, we also have $|j(P)|_v=|t|_v^{-N}$.  If $v$ is infinite, we also have $|t|_v\leq e^{-\pi\sqrt{3}/N}$ and $|t|_v^N \leq 2|j(P)|_v^{-1}$.  In particular, $\gamma=t^{N/w}g(t)$ in $\Lbar_v$ for some power series $g(x)$ with coefficients in $\ZZ[\zeta_N]$.   So when $v$ is finite, we have $|\gamma|_v=|t|_v^{N/w}|g(t)|_v \leq |t|_v^{N/w}=|j(P)|_v^{-1/w}$.   

We can now assume that $v$ is infinite.  Define $u=|t|_v^N$; we have $u\leq e^{-\pi\sqrt{3}}$.
Solving for $\gamma$ in (\ref{E:gamma unsolved}) and taking absolute values, we obtain
\begin{align} \label{E: gamma s1 s2}
|\gamma|_v \leq u^{-(mw+r)/w}\, s_1 +|\varphi(c)|_v u^{-(mw+r)/w} \,s_2,
\end{align}
where $s_1:=\sum_{n=mw+r+1}^\infty |b_{n}|_v u^{n/w}$ and $s_2:=\sum_{n> m+r/w}|a_{n}| u^{n}$.   Using Lemma~\ref{L:sum of cn} with $B=mw+r+1$ and our bound $|b_n|\leq \beta \max\{1,(n/w)^{24m}\}$, we have
\begin{align*}
u^{-(mw+r)/w} s_1 &\leq   \beta\cdot  231.6wu^{1/w}\max\{(mw+r+1)/w,5m\}^{24m+1}\\
&\leq  u^{1/w} \beta\cdot  231.6Nm^{24m+1}\max\{\mu+2,5\}^{24m+1},
\end{align*}
where the last inequality uses $1\leq w\leq N$ and $r\leq m\mu$.  Using the bounds on $m$ and $\mu+2$ from Lemma~\ref{L:trivial bounds}, we obtain
\begin{align*}
u^{-(mw+r)/w} s_1 &\leq u^{1/w}\beta\cdot 231.6  N^{144m+7}(\tfrac{1}{24}\cdot \tfrac{31}{54})^{24m+1} \leq u^{1/w}\beta\cdot 5.54 N^{144m+7} 0.024^{24m}.
\end{align*}

Let $B$ be the smallest integer for which $B>m+r/w$. By Lemma~\ref{L:Deltam sum bounds}, we have
\begin{align*}
s_2
& \leq 465u^{B} \max\{m+r/w,2m\}^{6m+1}\\
&\leq 465u^{m+r/w+1/w} m^{6m+1} \max\{1+\mu,2\}^{6m+1},
\end{align*}
where in the last inequality we have used that $u<1$, $B\geq m+r/w+1/w$, and $r/w\leq r \leq m\mu$.   So by Lemma~\ref{L:varphi basic}(\ref{L:varphi basic iv}),
\[
 |\varphi(c)|_v u^{-(mw+r)/w} s_2 \leq u^{1/w} \beta \cdot 465 m^{30m+1} \max\{1+\mu,2\}^{6m+1}.
\]
Using the bounds on $m$ and $\mu+1$ from Lemma~\ref{L:trivial bounds}, we obtain
\begin{align*}
 |\varphi(c)|_v u^{-(mw+r)/w} s_2 &\leq u^{1/w} \beta\cdot 465 N^{108m+6} (\tfrac{1}{24})^{30m+1}  (\tfrac{29}{54})^{6m+1}\\
 & \leq u^{1/w} \beta\cdot 10.41 N^{108m+6} 0.0162^{24m}.
\end{align*}
In particular, $ |\varphi(c)|_v u^{-(mw+r)/w} s_2 \leq u^{1/w}\beta\cdot 5.54 N^{144m+7} 0.024^{24m}$ since $N\geq 3$.

 Thus from (\ref{E: gamma s1 s2}) and the above bounds, we have
 \begin{align*}
 |\gamma|_v &\leq 2\cdot u^{1/w}\beta\cdot 5.54 N^{144m+7} 0.024^{24m} =u^{1/w} \beta C'/2.
 \end{align*}
 Since $u^{1/w}=|t|_v^{N/w} \leq (2|j(P)|_v^{-1})^{1/w}\leq 2 |j(P)|_v^{1/w}$, we conclude that $|\gamma|_v \leq |j(P)|_v^{1/w} \beta C'$.
\end{proof}

We have $\gamma^w \in K(c)$.  For any place $v$ of $L$, we have
\[
|\xi|_v = \prod_{\sigma\in \Gal(K(c)/K)} |\sigma(\gamma^w)|_v.
\]
Since the action of $\Gal(K(c)/K)$ on $\Sigma'$ is transitive and $c\in \Sigma'$, we have $|\Gal(K(c)/K)|=|\Sigma'|$.

Suppose that $v$ is infinite.  We have $|\sigma(\gamma^w)|_v \leq (\beta C')^w$ for all $\sigma\in \Gal(K(c)/K)$ by Lemma~\ref{L:gamma bound infinite}; note that the lemma holds for an arbitrary infinite place.  Therefore, 
\[
|\xi|_v \leq (\beta C')^{w|\Gal(K(c)/K)|}=(\beta C')^{w|\Sigma'|}.
\]
Now suppose further that $|j(P)|_v>3500$ and that there is a point $P\in Y_G(K)$ for which $\varphi(P)=\varphi(c)$ and $P\in \Omega_{c,v}$.   By Lemma~\ref{L:almost there xi}, we have $|\gamma^w|_v\leq |j(P)|_v^{-w} (\beta C')^w$.  Therefore,
\[
|\xi|_v \leq (\beta C')^{w(|\Gal(K(c)/K)|-1)} |\gamma^w|_v \leq  |j(P)|_v^{-w} (\beta C')^{w|\Gal(K(c)/K)|}=|j(P)|_v^{-w} (\beta C')^{w|\Sigma'|}.
\]

Finally suppose that $v$ is finite and that there is a point $P\in Y_G(K)$ for which $\varphi(P)=\varphi(c)$ and $P\in \Omega_{c,v}$.   Since $\gamma$ is integral, we have $|\sigma(\gamma^w)|_v \leq 1$ for all $\sigma\in \Gal(K(c)/K)$.   Therefore, $|\xi|_v\leq |\gamma^w|_v$.  By Lemma~\ref{L:almost there xi}, we conclude that $|\xi|_v \leq |j(P)|_v^{-w}$.

\begin{bibdiv}
\begin{biblist}

\bib{MR1336325}{article}{
   author={Bilu, Yuri},
   title={Effective analysis of integral points on algebraic curves},
   journal={Israel J. Math.},
   volume={90},
   date={1995},
   number={1-3},
   pages={235--252},
   issn={0021-2172},
   review={\MR{1336325}},
}

\bib{BiluParent}{article}{
   author={Bilu, Yuri},
   author={Parent, Pierre},
   title={Runge's method and modular curves},
   journal={Int. Math. Res. Not. IMRN},
   date={2011},
   number={9},
   pages={1997--2027},
   issn={1073-7928},
   review={\MR{2806555}},
}

\bib{MR2216774}{book}{
   author={Bombieri, Enrico},
   author={Gubler, Walter},
   title={Heights in Diophantine geometry},
   series={New Mathematical Monographs},
   volume={4},
   publisher={Cambridge University Press, Cambridge},
   date={2006},
   pages={xvi+652},
   isbn={978-0-521-84615-8},
   isbn={0-521-84615-3},
   review={\MR{2216774}},
}

\bib{MR3705252}{article}{
   author={Brunault, Fran\c{c}ois},
   title={R\'{e}gulateurs modulaires explicites via la m\'{e}thode de
   Rogers-Zudilin},
   language={French, with English and French summaries},
   journal={Compos. Math.},
   volume={153},
   date={2017},
   number={6},
   pages={1119--1152},
   issn={0010-437X},
   review={\MR{3705252}},
}

\bib{BN2019}{article}{
  author={Brunault, Fran\c{c}ois},
  author={Neururer, Michael},
  title={Fourier expansions at cusps},
  journal={The Ramanujan Journal}
  date={2019},
}

\bib{MR4450724}{article}{
   author={Cai, Yulin},
   title={An explicit bound of integral points on modular curves},
   journal={Commun. Math.},
   volume={30},
   date={2022},
   number={1},
   pages={161--174},
   issn={1804-1388},
   review={\MR{4450724}},
}

\bib{MR2016709}{article}{
      author={Cummins, C.~J.},
      author={Pauli, S.},
       title={Congruence subgroups of {${\rm PSL}(2,{\Bbb Z})$} of genus less
  than or equal to 24},
        date={2003},
        ISSN={1058-6458},
     journal={Experiment. Math.},
      volume={12},
      number={2},
       pages={243\ndash 255},
}

\bib{MR337993}{article}{
   author={Deligne, P.},
   author={Rapoport, M.},
   title={Les sch\'{e}mas de modules de courbes elliptiques},
   language={French},
   conference={
      title={Modular functions of one variable, II},
      address={Proc. Internat. Summer School, Univ. Antwerp, Antwerp},
      date={1972},
   },
   book={
      series={Lecture Notes in Math., Vol. 349},
      publisher={Springer, Berlin-New York},
   },
   date={1973},
   pages={143--316},
   review={\MR{337993}},
}

\bib{MR1109353}{article}{
   author={Faltings, Gerd},
   title={Diophantine approximation on abelian varieties},
   journal={Ann. of Math. (2)},
   volume={133},
   date={1991},
   number={3},
   pages={549--576},
   issn={0003-486X},
   review={\MR{1109353}},
}

\bib{MR2104361}{article}{
   author={Kato, Kazuya},
   title={$p$-adic Hodge theory and values of zeta functions of modular
   forms},
   language={English, with English and French summaries},
   note={Cohomologies $p$-adiques et applications arithm\'{e}tiques. III},
   journal={Ast\'{e}risque},
   number={295},
   date={2004},
   pages={ix, 117--290},
   issn={0303-1179},
   review={\MR{2104361}},
}

\bib{MR0447119}{article}{
   author={Katz, Nicholas M.},
   title={$p$-adic properties of modular schemes and modular forms},
   conference={
      title={Modular functions of one variable, III},
      address={Proc. Internat. Summer School, Univ. Antwerp, Antwerp},
      date={1972},
   },
   book={
      publisher={Springer, Berlin},
   },
   date={1973},
   pages={69--190. Lecture Notes in Mathematics, Vol. 350},
}

\bib{MR2904927}{article}{
   author={Khuri-Makdisi, Kamal},
   title={Moduli interpretation of Eisenstein series},
   journal={Int. J. Number Theory},
   volume={8},
   date={2012},
   number={3},
   pages={715--748},
   issn={1793-0421},
   review={\MR{2904927}},
}

\bib{MR1289008}{article}{
   author={Kooman, Robert Jan},
   title={Faltings's version of Siegel's lemma},
   conference={
      title={Diophantine approximation and abelian varieties},
      address={Soesterberg},
      date={1992},
   },
   book={
      series={Lecture Notes in Math.},
      volume={1566},
      publisher={Springer, Berlin},
   },
   date={1993},
   pages={93--96},
   review={\MR{1289008}},
}

\bib{MR3917917}{article}{
   author={Le Fourn, Samuel},
   title={A tubular variant of Runge's method in all dimensions, with
   applications to integral points on Siegel modular varieties},
   journal={Algebra Number Theory},
   volume={13},
   date={2019},
   number={1},
   pages={159--209},
   issn={1937-0652},
   review={\MR{3917917}},
}

\bib{MR2471957}{article}{
   author={Rouse, Jeremy},
   title={Bounds for the coefficients of powers of the $\Delta$-function},
   journal={Bull. Lond. Math. Soc.},
   volume={40},
   date={2008},
   number={6},
   pages={1081--1090},
   issn={0024-6093},
   review={\MR{2471957}},
}

\bib{MR2459823}{book}{
   author={Schoof, Ren\'{e}},
   title={Catalan's conjecture},
   series={Universitext},
   publisher={Springer-Verlag London, Ltd., London},
   date={2008},
   pages={x+124},
   isbn={978-1-84800-184-8},
   review={\MR{2459823}},
}

\bib{MR3250041}{article}{
   author={Sha, Min},
   title={Bounding the $j$-invariant of integral points on modular curves},
   journal={Int. Math. Res. Not. IMRN},
   date={2014},
   number={16},
   pages={4492--4520},
   issn={1073-7928},
   review={\MR{3250041}},
}

\bib{MR1291394}{book}{
   author={Shimura, Goro},
   title={Introduction to the arithmetic theory of automorphic functions},
   series={Publications of the Mathematical Society of Japan},
   volume={11},
   note={Reprint of the 1971 original;
   Kan\^{o} Memorial Lectures, 1},
   publisher={Princeton University Press, Princeton, NJ},
   date={1994},
   pages={xiv+271},
}

\bib{MR1312368}{book}{
   author={Silverman, Joseph H.},
   title={Advanced topics in the arithmetic of elliptic curves},
   series={Graduate Texts in Mathematics},
   volume={151},
   publisher={Springer-Verlag, New York},
   date={1994},
   pages={xiv+525},
   isbn={0-387-94328-5},
   review={\MR{1312368}},
}

\bib{OpenImage}{article}{
	author={Zywina, David},
	title={Explicit open images for elliptic curves over $\QQ$},
	date={2024},
	note={\href{https://arxiv.org/abs/2206.14959}{arXiv:2206.14959} [math.NT]},
}

\end{biblist}
\end{bibdiv}

\end{document}